\definecolor{mongris}{gray}{0.75}
\definecolor{bleuclair}{rgb}{0.83,0.91,0.97}
\definecolor{bleufonce}{rgb}{0.2,0.59,0.85}
\definecolor{fond}{rgb}{0.99,0.96,0.9}
\definecolor{monviolet}{rgb}{0.64,0.41,0.74}
\definecolor{essai1}{rgb}{0.97,0.97,0.97}
\newcommand{\bb}{\mathbb}
\newcommand{\E}{\mathbb{E}}
\newcommand{\s}{\geq}
\newcommand{\m}{\leq}
\newcommand{\lra}{\longrightarrow}
\newcommand{\1}{\mathbbm{1}}
\newcommand{\fonction}[5]{\begin{array}{lrcl}
#1: & #2 & \longrightarrow & #3 \\
    & #4 & \longmapsto & #5 \end{array}}
\newcommand{\nset}{\mathbb{N}}
\newcommand{\rset}{\mathbb{R}}
\newcommand{\floor}[1]{\lfloor{#1}\rfloor}
\newcommand{\PP}[1][]{\ifthenelse{\equal{#1}{}}{\ensuremath{\mathbb{P}}}{\ensuremath{\mathbb{P}\left( #1 \right) }}}
\newcommand{\EE}[1][]{\ifthenelse{\equal{#1}{}}{\ensuremath{\mathbb E}}{\ensuremath{{\mathbb E}\left[ #1 \right]}}}
\newcommand{\Var}[1][]{\ifthenelse{\equal{#1}{}}{\ensuremath{\mathbb{V}\mathrm{ar}}}{\ensuremath{\mathbb{V}\mathrm{ar}\left[ #1 \right] }}} 
\DeclareMathOperator{\Span}{span}
\newcommand\ie{\emph{i.e.}\xspace}
\newcommand\Iff{\emph{i.f.f.}\xspace}
\newcommand{\mb}{\mathbf}
\newcommand{\ud}{\,\mathrm{d}}
\newcommand{\point}{\,\cdot\,}
\newcommand{\given}[1][{}]{\;\middle\vert\;{#1} }
\newcommand{\comm}[1]{\ignorespaces}
\newcommand{\rv}{{RV}\xspace}
\newcommand{\fidi}{\emph{fidi}\xspace}
\newcommand{\Lspace}{L^2[0,1]}
\newcommand{\Hilb}{\mathbb{H}}
\newcommand{\Mzer}{M_0}
\newcommand{\Mto}{\xrightarrow[]{\Mzer}}
\newcommand{\wto}{\xrightarrow[]{w}}
\newcommand{\RV}{\mathrm{RV}}
\newcommand{\sphere}{\mathbb{S}}
\newcommand{\Pangt}{P_{\Theta,t}}
\newcommand{\Panginf}{P_{\Theta,\infty}}
\newcommand{\HSH}{\mathrm{HS}(\Hilb)}
\newcommand{\im}{\mathrm{Im}}
\newcommand{\ES}[2]{V^{#1}_{#2}}
\newcommand{\ESh}[2]{\widehat{V}^{#1}_{#2}}
\newcommand{\ev}[2]{\lambda^{#1}_{#2}}
\newcommand{\Cov}[1]{C_{#1}}
\newcommand{\Covb}[1]{\overline{C}_{#1}}
\newcommand{\Covh}[1]{\widehat{C}_{#1}}
\newcommand{\eg}[2]{\gamma^{#1}_{#2}}
\newcommand{\tnk}{t_{n,k}}
\newcommand{\tnkh}{\widehat{t}_{n,k}}
\newcommand{\Pareto}[1]{\mathrm{Pareto}({#1})}
\newcommand{\calN}[1]{\mathcal{N}}
\newtheorem{theorem}{Theorem}[section]
\newtheorem{proposition}{Proposition}[section]
\newtheorem{corollary}{Corollary}[section]
\newtheorem{lemma}{Lemma}[section]
\newtheorem{example}{Example}[section]
\theoremstyle{definition}
\theoremstyle{remark}
\newtheorem{remark}{Remark}[section]
\numberwithin{equation}{section}
\title{Regular Variation in Hilbert Spaces and\\ Principal Component Analysis for Functional Extremes}
\author{Stephan Clémençon, Nathan Huet, Anne Sabourin}
\date{\today}
\begin{document}

\maketitle
\begin{abstract}

Motivated by the increasing availability of data of functional nature, we develop a general probabilistic and statistical framework for
extremes of regularly varying random elements $X$ in $L^2[0,1]$. 
We place ourselves in a Peaks-Over-Threshold framework
where a functional extreme is defined as an observation $X$ whose $L^2$-norm $\|X\|$ is comparatively large.
Our goal is to propose a dimension reduction framework resulting into
finite dimensional projections for such extreme observations.  Our 
contribution is double. First, we investigate the notion of Regular
Variation for random quantities valued in a general separable Hilbert
space, for which we propose a novel concrete characterization
involving solely stochastic convergence of real-valued random
variables.  Second, we propose a notion of functional Principal Component Analysis (PCA) 
accounting for the principal `directions' of functional
extremes.
We investigate the statistical properties of the empirical covariance operator  
of the angular component of extreme functions,  by upper-bounding the Hilbert-Schmidt norm of
the estimation error  for finite sample
sizes. Numerical experiments with simulated and real data illustrate
this work.

  \end{abstract}
\tableofcontents

  \section{Introduction}

The increasing availability of data of functional nature and various
applications that could now possibly rely on such observations, such
as predictive maintenance of sophisticated systems (\textit{e.g.}
energy networks, aircraft fleet) or environmental risk assessment (\textit{e.g.} air quality monitoring), open new perspective for Extreme Value
Analysis. In
particular,
massive measurements sampled at an ever finer
granularity
offer the possibility of observing extreme behaviors, susceptible to
carry relevant information for various statistical tasks, \emph{e.g.}
anomaly detection or generation of synthetic extreme
examples.

The main purpose of this paper is to develop a general probabilistic
and statistical framework for the analysis of extremes of regularly varying random functions in the space $L^2[0,1]$, the Hilbert space of
square-integrable, real-valued functions over $[0,1]$, 
with immediate possible generalization to other
compact domains, \emph{e.g.} spatial ones.  A major feature of the
proposed framework is the possibility to project the observations
onto a finite-dimensional functional space,
\emph{via} a modification of the standard functional Principal Component Analysis (PCA)  which is
suitable for heavy-tailed observations, for which second (or first)
moments may not exist.

Recent years have seen a growing interest in the field of Extreme
Value Theory (EVT) towards high dimensional problems, and modern
applications involving ever more complex datasets. A particularly
active line of research concerns unsupervised dimension reduction for
which a variety of methods have been proposed over the past few years,
some of them assorted with non asymptotic statistical guarantees
relying on suitable concentration inequalities. Examples of such
strategies include identification of a sparse support for the limiting
distribution of appropriately rescaled extreme observations
(\cite{goixJMVA,simpson2020determining,meyer2021sparse,drees2021principal,cooley2019decompositions,medina2021spectral}),
graphical modeling and causal inference based on the notion of tail
conditional independence
(\cite{hitz2016one,segers2020one,gnecco2021causal}), clustering
(\cite{EmilieEJS,janssen+w:2020,chiapino2019multivariate}), see also
the review paper \cite{engelke2021sparse}.  In these works, the
dimension of the sample space, although potentially high, is finite,
and dimension reduction is a key step, if not the main purpose, of the
analysis.  On the other hand, functional approaches in EVT have a long
history and are still the subject of recent development in spatial
statistics, see \emph{e.g.} the recent review from
\cite{huser2022advances}.
For statistical applications, typically for spatial extremes, strong
parametric assumptions must be made to make up for the infinite-dimensional nature of the problem. Dimension reduction is then limited
to choosing a parametric model of appropriate complexity and it is not
clear how to leverage dimension reduction tools recently developed for
multivariate extremes in this
setting.
The vast majority of existing works in functional extremes consider
the continuous case, following in the footsteps of seminal works on
Max-stable processes (\cite{de1984spectral,de2006extreme}): the random
objects under study are random functions in the space
$\mathcal{C}[0,1]^d$, $d\in\nset^*$, of continuous functions on the
product unit interval, endowed with the supremum norm. Some exceptions
exist, \emph{e.g.} the functional Skorokhod space ${\bb D}[0,1]^d$
equipped with the $J_1$- topology has been considered in several works
(see \cite{davis2008extreme,HL05} and the references therein), and
upper-semicontinuous functions equipped with the Fell topology are
considered in
\cite{resnick1991random,molchanov2016max,sabourin2017marginal,samorodnitsky2019extremal}. Again,
it is not clear how to perform dimension reduction in these functional
spaces.

In the present paper we place ourselves in the Peaks-Over-Threshold
(POT) framework: the focus is on the limit distribution of
rescaled observations, conditioned upon the event that their norm
exceeds a threshold, as this threshold tends to infinity. In the
continuous case, an extreme observation is declared so whenever its
supremum norm is large, \emph{i.e.} above a high quantile. The
limiting process arising in this context is a Generalized Pareto 
process. In the standard POT framework, the definition of an extreme
event depends on the choice of a norm which may be of crucial
importance in applications. As an example, in air quality monitoring
for public health matters, it may be more relevant to characterize
extreme concentration of pollutants through an integrated criterion
over a full $24$-hours period, rather than through the maximum hourly
record. This line of thoughts is the main motivation behind the work
of \cite{dombry2015functional}, which consider alternative definitions of
extreme events by means of an homogeneous cost functional, which gives
rise to $r$-Pareto processes.  However the observations are
still assumed to be continuous stochastic processes and the framework
is not better suited for dimension reduction than those developed in the previously cited
works.
A standard hypothesis  underlying the POT approach is
regular variation (RV), which, roughly, may be seen as an assumption
of approximate radial homogeneity regarding the distribution of the
random object $X$ under study, conditionally on an excess of the norm
$\|X\|$ of this object above a high radial threshold. An excellent account of regular variation of multivariate random vectors is given in the monographs
\cite{Resnick1987,resnick2007heavy}.
In \cite{hult2006regular} regular variation is extended to measures on arbitrary
complete, separable metric spaces and involves $\Mzer$-convergence
of  measures associated to the distribution of rescaled random
objects.  One characterization of regular variation in this context is
\emph{via} weak convergence of the pseudo angle $\Theta= \|X\|^{-1} X$
and regular variation of the (real-valued) norm $\|X\|$.  Namely the
law of $\Theta$ given that $\|X\|>t$ ($t>0$),  ${\cal L}(\Theta\,|\,\|X\|>t)$, 
which we denote by $\Pangt$, 
 must converge weakly as $t\to\infty$,
towards  a limit probability distribution $\Panginf$ on the unit sphere 
(see \emph{e.g.}
\cite{segers2017polar,davis2008extreme}).  In the present work
we place ourselves in the general regular variation context defined through
$\Mzer$-convergence in \cite{hult2006regular}, and we focus our
analysis on random functions valued in the Hilbert space $L^2[0,1]$,
which has received far less attention (at least in EVT) than the
spaces of continuous, semi-continuous or \emph{c\`ad-l\`ag} functions.  One main advantage of the proposed framework, in addition
to allowing for rough function paths, is to pave the way for
dimension reduction of the observations \emph{via} functional PCA
of the \emph{angular} component $\Theta$. In this respect
the dimension reduction strategy that we propose may be seen as an
extension of \cite{drees2021principal}, who worked in the
finite-dimensional setting and derived finite sample guarantees
regarding the eigenspaces of the empirical covariance operator for
$\Theta$. However their techniques of proof cannot be leveraged in the
present context because they crucially rely on the compactness of the
unit sphere in $\rset^d$, while the unit sphere in an infinite-dimensional Hilbert space is not compact.

Several questions arise. First, when dealing with functional
observations, the choice of the norm (thus of a functional space) is
not indifferent, since not all norms are equivalent. In particular,
their is no reason why regular variation  in one functional
space (say, ${\cal C}[0,1]$) would be equivalent to regular variation
in a larger space such as  $L^2[0,1]$.  Also a recurrent issue
in the context of weak convergence of stochastic processes is to
verify tightness conditions in addition to weak convergence of finite-dimensional projections, in order to ensure weak convergence of the
process as a whole. The case of Hilbert valued random variables makes
no exception (see \emph{e.g.}  Chapter 1.8 in \cite{vaart1997weak}). A
natural question to ask is then: 'What concrete conditions regarding the
angular and radial components in a RV/POT framework, which may be verified in practice on
specific generative examples or even on real data,  are sufficient in
order to ensure tightness?'.  Regarding the
PCA of the angular distribution, one may wonder
whether the eigen functions associated with the angular covariance
operator above finite levels $t>0$ indeed converge to the eigen
functions of the covariance operator associated with the limit distribution $\Panginf$ under the RV
conditions alone, and whether the results of \cite{drees2021principal}
regarding concentration of the empirical eigen spaces indeed extend to
the infinite-dimensional Hilbert space setting.

Extreme Value Analysis of  functional PCA with  $L^2$-valued random functions
have already been considered in the literature,
from a quite different perspective however, leaving the above questions
unanswered.
In \cite{Xiong2}, the authors assume regular variation of the scores
of a principal component decomposition, (\emph{i.e.} the random coordinates of
the observations projected onto a $L^2$-orthogonal family) and they
investigate the extremal behavior of their empirical counterparts. In
\cite{Xiong3} and \cite{kokoszka2023principal}, regular variation is
assumed and various convergence results regarding the empirical
covariance operators of the random function $X$ (not the angular
component $\Theta$) are established, under the condition that the
regular variation index belongs to some restricted interval,
respectively $2< \alpha < 4$ and $0<\alpha<2$. In contrast in the
present work the value of the regular variation index is unimportant
as the PCA that we consider is that of the
\emph{angular component} $\Theta$ of the random functions. As $\Theta$
belongs to a bounded subset of $L^2[0,1]$, existence of moments of any
order is automatically granted.  Also in the existing works above
mentioned, regular variation in $L^2[0,1]$, in the sense of Hult and
Lindskog, is taken for granted and no attempt is made to translate the
general, abstract definition from \cite{hult2006regular} into
concrete, finite-dimensional
conditions. In \cite{kim2021extremal}, extremal dependence between the scores of the  functional PCA of $X$ is investigated. They prove on this occasion  (see Proposition 2.1 therein) that regular variation in $L^2[0,1]$   implies multivariate regular variation of finite-dimensional projections of $X$. However, the reciprocal statement is not investigated.

The contribution of the present article is twofold. $(i)$ We provide a comprehensive
description of the notion of regular variation in a separable  Hilbert 
space which fits into the framework of \cite{hult2006regular}.  In
Section \ref{sec:HT}, we formulate specific characterizations involving finite-dimensional projections and moments of the angular variable $\Theta$,
and we discuss the relationships between regular variation in ${\cal C }[0,1]$ and in
$L^2[0,1]$. It turns out that the former implies the latter, whereas the converse is not true. We 
provide several examples and counter-examples illustrating our
statements.  $(ii)$ We make a first step towards
bridging the gap between dimension reduction approaches and functional
extremes by considering the functional PCA of the angular
variable $\Theta$. In Section \ref{sec:KL}, we investigate the convergence of the non
asymptotic covariance operator associated with distribution
 $\Pangt$.  
In the situation where $n\geq 1$ independent realizations of the random function $X$ can be observed, we additionally provide statistical
guarantees regarding empirical estimation of the sub
asymptotic covariance operator associated to a radial threshold
$t_{n,k}$, the $1-k/n$ quantile of the radial variable $\|X\|$, in the
form of concentration inequalities regarding the Hilbert-Schmidt norm
of the estimation error, which leading terms involve the number $k\leq n$ of
extreme order statistics considered to compute the estimator. These bounds, combined with regular
variation of the observed random function $X$ and the results from the preceding section ensure in particular the consistency of the empirical estimation
procedure. In Section \ref{sec:num} we present experimental results involving
real and simulated data illustrating the relevance of the proposed
dimension reduction framework.
Certain technical details are deferred to the Appendix.

For clarity, we start off by recalling some
necessary background regarding probability and weak convergence in
Hilbert spaces, functional PCA and regular variation.

\section{Background and Preliminaries}\label{sec:background}

As a first go, we recall some key facts about regular variation in metric spaces, probability in Hilbert spaces and  Principal Component Analysis of random elements of a Hilbert space.
Here and throughout, the indicator function of any event $\mathcal{E}$ is denoted by $\1\{\mathcal{E}\}$. The Dirac mass at any point $a$ is written as  $\delta_a$ and  the integer part of any real number $u$ by $\lfloor u \rfloor$. By $\Lspace$ is meant the Hilbert space of square integrable, real-valued  functions $f:[0,1]\rightarrow \mathbb{R}$  equipped with its standard  inner product  $\langle f,\; g \rangle=\int_0^1 f(s)g(s)ds$  and the  $L^2$-norm $\| f\|=(\int_0^1 f^2(s)ds)^{1/2}$. 
Our results are valid in any arbitrary separable Hilbert space $\Hilb$, for which  we abusively use  the same notations regarding  the scalar product and the norm as in the special case of $\Lspace$ when it is clear from the context. Also we write $\Hilb_0 = \Hilb\setminus\{0\}$. 
Finally the arrow $\wto$ stands for weak convergence of Borel probability measures: $P_n\wto P$ \Iff we have $\int f \ud P_n \to \int f \ud P$ as $n\to +\infty$ for any bounded, continuous function $f$ defined on the same space as $P_n$ and $P$.

\subsection{Regular Variation in Euclidean and  Metric Spaces}\label{subsec:reg_var}

We recall here the main features of  Regular Variation in  metric spaces, a framework introduced by \cite{hult2006regular} as a generalization of the Euclidean case documented \emph{e.g.} in \cite{Resnick1987,bingham1989regular}. 

Let $(E, d)$ be a complete separable metric space, endowed with a 
 multiplication by nonnegative real numbers $t>0$, such that the
mapping $(t,x)\in \mathbb{R}_+\times E \mapsto tx$ is
continuous.  One must assume the existence of an  \textit{origin} $\mathbf{0} \in E$,  such that
$0x=\mathbf{0}$ for all $x\in E$. In the present work we shall 
take  $E = \Hilb$, a separable, real Hilbert
space, and $\mb{0}$ will simply be the zero of $\Hilb$.  Let
$E_0= E\setminus \{\mb{0}\}$.  For any subset
$A\subset E$, and $t>0$, we write $tA=\{tx:\; x\in
A\}$.
Denote by $\mathcal{C}_0$ the set of bounded and continuous
real-valued functions on $E_0$ which vanish in some
neighborhood of $\mathbf{0}$ and let $\Mzer$ be the class of Borel
measures on $E_0$, which are finite on each Borel subset of
$E_0$ bounded away from
$\mathbf{0}$.
Then the sequence $\mu_n$ \emph{converges to $\mu$  in $\Mzer$} and we write $\mu_n \Mto \mu$, if 
$\int f d\mu_n \rightarrow \int f d\mu$ for any
$f\in \mathcal{C}_0$.

A measurable \emph{function} $f: \rset\to\rset$ is called \emph{regularly varying} with index $\rho$, and we write $f \in \RV_\rho$, whenever for any $x>0$ the ratio $f(tx)/f(t)\to x^\rho$ as $t\to\infty$. 
A \emph{measure} $\nu$ in  $\Mzer$ is \emph{regularly varying} \Iff there exists a nonzero measure $\mu$ in $\Mzer$ and a regularly varying function $b$ such that
\begin{equation}\label{eq:GRV1}
b(n)\nu(n \point )\Mto \mu(\point) \text{ as } n\rightarrow +\infty.
\end{equation}
The limit measure is necessarily homogeneous,  for all $t>0$ and Borel subset $A$ of $E_0$, $\mu(tA )=t^{-\alpha}\mu(A)$ for some $\alpha>0$. Then we say that $\nu$ is regularly varying  with index $-\alpha$ and we write  $\nu\in\RV_{-\alpha}$.
A \emph{random element} $X$ valued in $E$ (that is, a Borel measurable map from some probability space $(\Omega,\mathcal{A}, \PP)$ to $E$) is called regularly varying with index $\alpha>0$  if its probability distribution is  in $\RV_{-\alpha}$.  In this case, one writes $X\in RV_{-\alpha}(E)$.
A convenient characterization of  regular variation of a random element $X$  is obtained through a polar decomposition.  Let $r(x) = d(x, \mb 0)$ for $x\in E$. For simplicity, and because it is true in the  Hilbert space framework that is our main concern,  we focus on the case  where the distance to $\mb 0$ is homogeneous, although this assumption can be relaxed, as in \cite{segers2017polar}. Notice that in $\Hilb$, $r(x) = \|x\|$. Introduce a pseudo-angular variable, $\Theta = \theta(X)$ where for $x\in E_0$, $\theta(x) = r(x)^{-1} x$ and let $R = r(X)$. Denote by $\sphere$ the unit sphere in $E$ relative to $r$, $\sphere=\{x \in E: r(x) = 1\}$, equipped with the trace Borel $\sigma$-field $\cal B(\sphere)$.
The map $T: E_0\to \rset_+^* \times \sphere : x\mapsto (r(x), \theta(x))$  is the polar decomposition. A key quantity throughout this work will be the conditional distribution of the angle given that $R>t$  for which we introduce the notation
\begin{equation}\label{eq:condit-distrib-angle}
  \Pangt( \point  ) = \PP[\Theta \in \point \,|\, R>t].
\end{equation}

Several equivalent characterizations of regular variation of $X$ have been proposed in \cite{segers2017polar} in terms of the pair $(R,\Theta)$ where $R = r(X)$, thus extending classical characterizations in the multivariate setting, see \cite{resnick2007heavy}.
In particular the next statement  shall prove to be useful in the subsequent analysis.
\begin{proposition}[Proposition~3.1 in~\cite{segers2017polar}]\label{prop:polarRVCond}  
A random element $X$ in $E$ is regularly varying with index $\alpha>0$ \Iff  conditions~\ref{cond_rv_radius} and~\ref{cond_weakcv_angle} below are simultaneously  satisfied: 
\begin{enumerate}[label=(\roman*),ref=(\roman*)]
\item\label{cond_rv_radius} The radial variable $R$ is regularly varying in $\rset$ with index $\alpha$; 
\item \label{cond_weakcv_angle} There exists a probability distribution $\Panginf$ on the sphere $(\sphere,\cal B(\sphere))$  such that $\Pangt\wto\Panginf$ as $t\to\infty$. 
\end{enumerate}
\end{proposition}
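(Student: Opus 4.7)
The strategy is to translate the abstract $\Mzer$-convergence in~\eqref{eq:GRV1} into concrete statements about the polar coordinates $(R,\Theta)$ through the map $T(x) = (r(x),\theta(x))$, which is a continuous bijection from $E_0$ onto $(0,\infty)\times\sphere$.

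\textbf{Forward direction.} Assume $X\in \RV_{-\alpha}(E)$, so that $b(n)\PP(X/n\in \point) \Mto \mu$ for some non-zero $\mu\in\Mzer$ homogeneous of order $-\alpha$ and some $b\in\RV_\alpha$. First I would establish a product decomposition of $\mu$ under $T$: because $\mu$ is $-\alpha$-homogeneous and $r$ is $1$-homogeneous, testing $\Mzer$-convergence against continuous functions of product form $\varphi(r(x))g(\theta(x))$ with $\varphi$ compactly supported away from $0$ forces $\mu\circ T^{-1} = c\, \alpha r^{-\alpha-1}\ud r \otimes \sigma(\ud\theta)$, where $\sigma$ is a finite non-zero Borel measure on $\sphere$ and $c>0$. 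Specializing to functions depending only on $r$ and a continuity set $\{r>u\}$ of $\mu$ (all but countably many $u>0$ qualify, since $\mu\{r>\epsilon\}<\infty$), I obtain $b(n)\PP(R>nu)\to \mu\{r>u\} = c\sigma(\sphere) u^{-\alpha}$, which yields (i). For (ii), I test against $g(\theta(x))\1_{\{r(x)>1\}}$ (approximated by $\mathcal{C}_0$ functions) for any bounded continuous $g$ on $\sphere$, divide by $b(n)\PP(R>n)$, and apply (i) and the product decomposition to get $\E[g(\Theta)\mid R>n] \to \int g\ud\sigma/\sigma(\sphere)$, so that $\Panginf := \sigma/\sigma(\sphere)$ satisfies (ii).

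\textbf{Converse direction.} Assume (i) and (ii). Set $b(t) = 1/\PP(R>t)$ (extended to integers) and define the candidate limit measure $\mu$ on $E_0$ through its pushforward $\mu\circ T^{-1}(\ud r,\ud\theta) = \alpha r^{-\alpha-1}\ud r\otimes \Panginf(\ud\theta)$. For any $s>0$ and any Borel $A\subset\sphere$ with $\Panginf(\partial A)=0$, decompose
\[
b(t)\PP\bigl(R/t>s,\; \Theta\in A\bigr) = \frac{\PP(R>ts)}{\PP(R>t)}\cdot \PP(\Theta\in A\mid R>ts).
\]
Assumption~(i) gives $\PP(R>ts)/\PP(R>t)\to s^{-\alpha}$, while (ii) applied at the threshold $ts\to\infty$ gives $\PP(\Theta\in A\mid R>ts)\to \Panginf(A)$. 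The right-hand side is precisely $\mu(\{r>s,\theta\in A\})$, so convergence holds on a $\pi$-system of continuity sets generating the Borel $\sigma$-field of $E_0$ restricted to regions bounded away from $\mb 0$. A Portmanteau-type argument then upgrades set-wise convergence to convergence $\int f\ud(b(t)\PP(X/t\in\point))\to\int f\ud\mu$ for all $f\in\mathcal{C}_0$, yielding the $\Mzer$-convergence~\eqref{eq:GRV1}.

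\textbf{Main obstacle.} The forward direction carries the main technical burden: extracting the product decomposition of $\mu$ under $T$ in a way that is intrinsic to the metric setting, while also verifying that the spectral measure $\sigma$ is both finite (so normalization makes sense) and non-zero (so $\Panginf$ is a probability measure). In both directions, an auxiliary but recurring difficulty is the choice of continuity sets: one must check that the sphere $\{r=u\}$ is a $\mu$-null set for almost every $u$, and more generally that the Portmanteau-style passage between $\mathcal{C}_0$-convergence and convergence on open sets bounded away from $\mb 0$ applies despite the non-compactness of $\sphere$ in the Hilbert setting. These issues are handled by standard truncation and approximation of indicators by continuous bump functions, using the $-\alpha$-homogeneity of $\mu$ to control errors.
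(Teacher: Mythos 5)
The paper does not prove this statement at all: it is imported verbatim as Proposition~3.1 of \cite{segers2017polar}, so there is no in-paper argument to compare against. What you have written is essentially a reconstruction of the standard proof from that reference (and from the $\Mzer$-framework of \cite{hult2006regular}), and it is correct in outline. Two remarks on where the real work sits. First, in the forward direction the product decomposition is obtained more cleanly from homogeneity alone than by testing against product-form functions: setting $\sigma(A):=\mu\{r>1,\,\theta\in A\}$ and using $r(ux)=ur(x)$, $\theta(ux)=\theta(x)$ gives $\mu\{r>u,\,\theta\in A\}=u^{-\alpha}\sigma(A)$ directly, with $\sigma$ finite because $\{r>1\}$ is bounded away from $\mb 0$ and nonzero because $\mu$ is nonzero; your route works but adds an unnecessary approximation layer. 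Second, the genuinely delicate step is the one you relegate to ``a Portmanteau-type argument'' in the converse: convergence of $b(t)\PP(R/t>s,\Theta\in A)$ on the $\pi$-system of rectangles $\{r>s\}\cap\theta^{-1}(A)$ does not by itself yield $\Mzer$-convergence; one must either invoke the Portmanteau theorem for $\Mzer$-convergence (Theorem~2.4 in \cite{hult2006regular}) after checking that every open set bounded away from $\mb 0$ is a countable union of such continuity rectangles (which uses separability of $\sphere$, not compactness, so the Hilbert setting causes no harm), or argue directly that for $f\in\mathcal{C}_0$ vanishing on $\{r<\epsilon\}$ the conditional law of $(R/t,\Theta)$ given $R>t\epsilon$ converges weakly on $(\epsilon,\infty)\times\sphere$ to the announced product limit. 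Your displayed identity does establish the joint (not merely marginal) rectangle convergence needed for this, so the gap is one of execution rather than of idea. Finally, note that the paper's definition~\eqref{eq:GRV1} is stated along integers $n$ while conditions \ref{cond_rv_radius} and \ref{cond_weakcv_angle} are stated as $t\to\infty$ continuously; the passage between the two uses monotonicity of $t\mapsto\PP[R>t]$ and regular variation of $b$, and deserves a sentence.
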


\subsection{Probability and Weak Convergence in Hilbert Spaces} \label{subsec:probaH}
Most of the background gathered in this section  may be found with detailed proofs, references and discussions in the monograph \cite{hsing2015theoretical}, which  provides a 
self-contained introduction to mathematical foundations of functional data  analysis. Other helpful  resources regarding probability and measure theory in Banach spaces and Bochner integrals  include \cite{vakhania2012probability} or \cite{mikusinski1978bochner}.

\paragraph*{Probability in Hilbert spaces} Consider a separable Hilbert space
$(\Hilb, \langle \cdot, \cdot \rangle)$ and denote by $\| \cdot\|$ the
associated norm.  Let $(e_i)_{i \s 1}$ be any orthonormal basis of
$\Hilb$.  Since a separable Hilbert space is a particular instance of
a Polish  space it follows from basic measure theory in
 (see \textit{e.g.} \cite{vakhania2012probability},
Theorem 1.2) that the Borel $\sigma$-field $\mathcal{B}(\Hilb)$
is generated by the family of mappings $\{ h^{*}: x\mapsto\langle x, h  \rangle , \; h\in\Hilb\}$, or in other words,  by the class of cylinders
$$\mathcal{C}= \{ (h^*)^{-1}(B),  B\in \mathcal{B}(\rset)\}.$$ In addition,
since the countable family $(e_i^*)_{i\geq 1}$ separates points in
$\Hilb$, it also generates the Borel $\sigma$-field,
see Proposition 1.4 and its
corollary in \cite{vakhania2012probability}. In other words, if we denote by $\pi_N$ the projection
from $\Hilb$ to $\rset^N$ onto the first $N\geq 1$ basis vectors,
$\pi_N(x) = (\langle x , e_1\rangle, \dots, \langle x , e_N\rangle)$,
the family of cylinder sets
$$\tilde{ \mathcal{C}} = \left\{\pi_N^{-1}(A_1\times \dots \times A_N),  A_j \in \mathcal{B}(\rset), j\le N, N \in \nset^*   \right\}$$
also generates $\mathcal{B}(\Hilb)$.
We call \emph{$\Hilb$-valued random element} (or
variable) any Borel-measurable mapping $X$ from a probability space
$(\Omega, \mathcal{A}, \bb{P})$ to $\Hilb$.  A mapping
$X:\Omega\to\Hilb$ is Borel-measurable \Iff the real-valued
projections $\langle X,h \rangle$ are Borel-measurable for all
$h \in \Hilb$, and the distribution of $X$ is entirely characterized by the distributions of these univariate projections, see Lemma
1.8.3. in \cite{vaart1997weak} or Theorem 7.1.2 in
\cite{hsing2015theoretical}. Since the family $\tilde C$ of cylinder sets is a $\pi$-system generating $\mathcal{B}(\Hilb)$, it is also true that the distributions of all finite dimensional  projections $(\pi_N(X), N \in\nset)$ onto a given basis also determine the distribution of $X$.
Integrability conditions for  random elements in $\Hilb$  are understood here in the Bochner sense. A random element  $X:(\Omega, \mathcal{A}, \bb{P}) \to \Hilb$ is Bochner integrable \Iff $\E[ \|X\|] < \infty$. Then the expectation of $X$, denoted by $\E [X]$, is the unique element of $\Hilb$ such that $\langle \E[ X] ,h \rangle = \E[ \langle X,h \rangle]$ for all $h \in \Hilb$.
A key property of the classic expectation is linearity, and is also satisfied  by the expectation defined in the Bochner sense. Namely if $T$ is a bounded, linear operator from  $\Hilb_1$ to $\Hilb_2$, two Hilbert spaces, and if $X$ is a Bochner-integrable random element in $\Hilb_1$ then $T(X)$ is also Bochner-integrable in $\Hilb_2$ and $T(\EE[X]) = \EE[T(X)]$, see Theorem~3.1.7 in \cite{hsing2015theoretical}. 
Many other properties of the classic expectation of a real-valued random variables
are preserved, \textit{e.g.} dominated convergence theorem.  In particular, a version of Jensen's inequality can be formulated for $\Hilb$-valued random variables, see \textit{e.g.} pp. 42-43 in \cite{ledoux1991probability}.
\paragraph*{Weak convergence of $\Hilb$-valued random elements}

As our main concern in Section \ref{sec:HT} is to characterize  regular variation in Hilbert spaces in terms of weak convergence of appropriately rescaled variables,
we recall some basic facts  regarding weak convergence in Hilbert spaces. Most of the material recalled next for the sake of completeness can be found in  Chapter 1.8 of \cite{vaart1997weak} and Chapter~7 of  \cite{hsing2015theoretical} in a more detailed way.

By definition a sequence $( X_n )_{n \in \mathbb{N}}$ of  $\Hilb$-valued random variables \textit{weakly converges} (or \textit{converges in distribution}) to a $\Hilb$-valued random variable $X$, and we  write $X_n \overset{w}{\lra} X$
(or equivalently,  $\mu_n \overset{w}{\lra} \mu$ if $\mu_n$ denotes the probability distribution of $X_n$ and $\mu$, that of $X$),
\Iff, for every bounded, continuous function $f:\Hilb\to \mathbb{R}$, 
we have 
$\E[f(X_n)] \rightarrow \E[f(X)]$
 This abstract definition may be difficult to handle for verifying weak convergence in specific examples. However, weak convergence in $\Hilb$ may equivalently be characterized \emph{via}
 weak convergence of one-dimensional  projections and an asymptotic  tightness condition, as described  next. Notice that, because $\Hilb$ is separable and complete, the Prokhorov Theorem applies, \ie uniform tightness and relative compactness of a family of probability measures are equivalent. Recall that a sequence of probability measures is uniformly tight   if for every $\varepsilon >0 $, there exists a compact set $K \subset \Hilb$ such that $\inf_{n\in\nset} \mu_n(K) \geq 1- \varepsilon$. Notice that, because $\Hilb$ is separable and complete, any single random element valued in $\Hilb$ is tight, see Lemma 1.3.2 in \cite{vaart1997weak}.

 \begin{remark}[On measurability and tightness]\label{rem:measurabilityTightness}
Before proceeding any further, in order to clear out  any potential confusion,
we emphasize that measurability of the considered maps $X_n:\Omega\to\Hilb$ is not required in \cite{vaart1997weak}, while it is assumed in the present work, in which we follow  common practice in functional data analysis focusing on Hilbert-valued observations (as  \emph{e.g} in~\cite{hsing2015theoretical}).  Notice also that the notion of tightness employed in 
\cite{vaart1997weak} as a criterion for relative compactness of a family of random variables $(X_n, n\in\nset)$,  is \emph{asymptotic tightness}, that is: for all  $\varepsilon>0$, there exists a compact subset $K$ of $\Hilb$, such that  for every $\delta>0$,   $\liminf_{n\to \infty} \PP[X_n \in K^\delta]> 1-\varepsilon$. Here, $K^\delta$ denotes the $\delta$-enlargement of the compact set $K$, that is, $\{x\in\Hilb: \inf_{y\in K} \|x-y\| <\delta \}$.  This is seemingly at odds with other presentations (\cite{prokhorov1956convergence,hsing2015theoretical}) where the argument is organized around the standard notion of uniform tightness, recalled above.
However in a Polish space such as $\Hilb$, the two notions of tightness (asymptotic or uniform) are equivalent (\cite{vaart1997weak}, Problem 1.3.9), so that the presentations of \cite{vaart1997weak} and \cite{hsing2015theoretical} are in fact closer together than they may appear at first view. 
 \end{remark}

A convenient criterion which is the main ingredient to ensure  tightness (hence relative compactness) of  a family of random $\Hilb$-valued random variables is termed 
\emph{asymptotically finite-dimensionality} in \cite{vaart1997weak} and seems to originate from \cite{prokhorov1956convergence}. A 
sequence of $\Hilb$-valued random variables 
  is  \emph{asymptotically finite-dimensional} if, given a Hilbert basis $(e_i, i \s 1)$ as above,  for all $\varepsilon,\; \delta >0$, there exists a finite subset $I \subset \bb{N}^*$ such that
\begin{equation}\label{eq:asymptoticFidi}
\limsup_n \PP[ \sum_{i \notin I} \langle X_n, e_i \rangle^2 > \delta ] < \varepsilon.
\end{equation}
It should be noticed that the above property is independent from the
specific choice of a Hilbert basis. Asymptotic finite-dimensionality
combined with uniform tightness of all univariate projections of the
kind $\langle X_n, h\rangle, h\in\Hilb$, is sufficient conditions for  uniform
tightness of the family of random variables $(X_n)_{n \in \mb{N}}$ (see
\cite{hsing2015theoretical}, Theorem 7.7.4). Also, since knowledge of
the distributions of all univariate projections characterizes the
distribution of a random Hilbert-valued variable $X$, asymptotic-finite dimensionality
combined with weak convergence of univariate projections (or of finite dimensional ones on a fixed basis) are
sufficient to prove weak convergence of a family of random elements in $\Hilb$, 
as summarized in the next statement.
\begin{theorem}[Characterization of weak convergence in $\Hilb$]\label{vdv184} 
  A net  of $\Hilb$-valued random variables $(X_t)_{t\in \mathbb{R}}$ converges in distribution to a random variable $X$ if and only if it is asymptotically finite-dimensional and either one of the two conditions below holds:
  \begin{enumerate}
  \item   The net $(\langle X_t,h \rangle)_{t \in \rset_+^*}$ converges in distribution to $\langle X,h \rangle$ for any $h \in \Hilb$; 
  \item The net $(\pi_N(X_t))_{\in\rset_+^*}$ converges in distribution to $\pi_N(X)$ for all $N\in\nset^*$. 
  \end{enumerate}

\end{theorem}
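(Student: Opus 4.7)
The plan is to prove both directions of the equivalence separately, relying on two auxiliary facts already recalled in Subsection~\ref{subsec:probaH}: the tightness criterion stated as Theorem~7.7.4 in \cite{hsing2015theoretical}, and the fact that the distribution of a $\Hilb$-valued random element is determined by the distributions of its finite-dimensional projections on a fixed basis. Observe first that conditions~(1) and~(2) are equivalent in finite dimension \emph{via} the Cram\'er--Wold device, and that under asymptotic finite-dimensionality (2) in turn implies (1): for any $h\in\Hilb$, decompose $\langle X_t,h\rangle = \langle X_t, h_N\rangle + \langle X_t,h-h_N\rangle$ with $h_N=\sum_{i\leq N}\langle h,e_i\rangle e_i$; the first term converges in law by~(2), while the second is controlled in probability by Cauchy--Schwarz combined with \eqref{eq:asymptoticFidi} and $\|h-h_N\|\to 0$. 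I would therefore work throughout with condition~(1).

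For the forward implication, assuming $X_t\wto X$, condition~(1) follows directly from the continuous mapping theorem applied to the continuous linear map $x\mapsto\langle x,h\rangle$. The substantive step is asymptotic finite-dimensionality. I would introduce the tail functional $\phi_I(x) = \sum_{i\notin I}\langle x,e_i\rangle^2$ for finite $I\subset\nset^*$, and observe that it is lower semi-continuous on $\Hilb$ as the increasing pointwise supremum, over finite subsets $J$ disjoint from $I$, of the continuous maps $x\mapsto\sum_{i\in J}\langle x,e_i\rangle^2$. Hence $\{\phi_I>\delta\}$ is open, and the Portmanteau theorem yields
\[\limsup_t \PP[\phi_I(X_t)>\delta] \;\leq\; \PP[\phi_I(X)>\delta].\]
Parseval's identity together with $\|X\|<\infty$ almost surely implies $\phi_I(X)\to 0$ almost surely as $I$ exhausts $\nset^*$, so by dominated convergence the right-hand side can be made smaller than $\varepsilon$ for a suitable finite $I$, which is exactly \eqref{eq:asymptoticFidi}.

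For the backward implication, assume \eqref{eq:asymptoticFidi} together with condition~(1). Convergence in distribution of each real-valued projection $\langle X_t,h\rangle$ already yields uniform tightness of $(\langle X_t,h\rangle)_t$ in $\rset$ for every $h\in\Hilb$; combined with \eqref{eq:asymptoticFidi}, Theorem~7.7.4 of \cite{hsing2015theoretical} promotes this into uniform tightness of $(X_t)_t$ in $\Hilb$. Since $\Hilb$ is Polish, Prokhorov's theorem then provides relative compactness of $(X_t)_t$ in the weak topology of probability measures on $\Hilb$, which is itself metrizable. It thus suffices to show that every weak limit $Y$ of a convergent subnet of $(X_t)_t$ coincides in distribution with $X$: this follows because $\langle Y,h\rangle \stackrel{d}{=} \langle X,h\rangle$ for every $h$ by~(1) and uniqueness of distributional limits in $\rset$, and the law of a Hilbert-valued random element is determined by its univariate projections.

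The main obstacle is concealed inside the citation: promoting uniform tightness of univariate projections to uniform tightness of the full Hilbert-valued family, under asymptotic finite-dimensionality, is the non-trivial content of Theorem~7.7.4 in \cite{hsing2015theoretical}, which is taken here as a black box. The remaining subtle point lies in the forward direction, where transferring the almost-sure smallness of the tail energy of $X$ (via Parseval) into the uniform $\limsup$ statement required by \eqref{eq:asymptoticFidi} forces one to apply Portmanteau to lower semi-continuous tail functionals, as outlined above.
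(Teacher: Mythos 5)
Your overall architecture is sound and genuinely more self-contained than the paper's proof, which for the sufficiency of Condition~1 simply invokes Theorem~1.8.4 of \cite{vaart1997weak} (itself an if-and-only-if statement) and then upgrades Condition~1 to Condition~2 by observing that the cylinder sets form a measure-determining class. You instead unpack that citation: your backward implication (tightness of the univariate projections plus \eqref{eq:asymptoticFidi}, promoted to uniform tightness of $(X_t)_t$ by Theorem~7.7.4 of \cite{hsing2015theoretical}, then Prokhorov and identification of subnet limits through univariate projections) is essentially the proof of the cited theorem, and your reduction of Condition~2 to Condition~1 via the decomposition $h=h_N+(h-h_N)$ with Cauchy--Schwarz and the tail-energy control is a valid alternative to the paper's determining-class argument. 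You also supply an explicit proof that weak convergence forces \eqref{eq:asymptoticFidi}, a direction the paper leaves entirely inside the citation.

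There is, however, a directional error in precisely that last step. You argue that $\phi_I(x)=\sum_{i\notin I}\langle x,e_i\rangle^2$ is lower semi-continuous, so that $\{\phi_I>\delta\}$ is open, and then conclude $\limsup_t\PP[\phi_I(X_t)>\delta]\le\PP[\phi_I(X)>\delta]$ ``by Portmanteau''. For open sets the Portmanteau theorem gives the \emph{opposite} inequality, $\liminf_t\PP[X_t\in U]\ge\PP[X\in U]$; the $\limsup$ bound you need requires a closed set, hence upper semi-continuity of $\phi_I$, which lower semi-continuity does not supply. The step is rescued by noticing that $\phi_I$ is in fact continuous on $\Hilb$, since $\phi_I(x)=\|x\|^2-\sum_{i\in I}\langle x,e_i\rangle^2$ is the difference of the continuous squared norm and a finite sum of continuous functions; one may then apply Portmanteau to the closed set $\{\phi_I\ge\delta\}$ to obtain $\limsup_t\PP[\phi_I(X_t)>\delta]\le\limsup_t\PP[\phi_I(X_t)\ge\delta]\le\PP[\phi_I(X)\ge\delta]$, and the right-hand side tends to $0$ as $I$ exhausts $\nset^*$ by Parseval and continuity of measure (rather than dominated convergence). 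With that correction your argument is complete.
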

\begin{proof}
  The fact that asymptotic finite-dimensionality together with
  Condition 1. in the statement  imply weak convergence results from Theorem 1.8.4 in
  \cite{vaart1997weak} in the case where all mappings are
  measurable. To see that Condition 1. may be replaced with Condition
  2. in order to prove weak convergence, note that asymptotic
  finite-dimensionality implies uniform tightness in the case of a
  Hilbert space (see Remark~\ref{rem:measurabilityTightness} above).
  Hence, weak convergence occurs if the two subsequential limits coincide. It
  is so because the family of cylinder sets $\tilde{\mathcal{C}}$ is a
  measure-determining class.
\end{proof}

\subsection{Principal Component Analysis of $\Hilb$-valued Random Elements}\label{subsec:KL}

We recall the necessary definitions and mathematical background
underlying principal component decomposition of $\Hilb$-valued random
elements.
A self-contained exposition of the topic may be found
in~\cite{hsing2015theoretical}, Chapter~7. In the sequel we use
indifferently the terminology \emph{principal component decomposition}
or \emph{principal component analysis} (functional PCA or PCA in
short). Because of its optimality properties in terms of $L^2$-error when $\Hilb=\Lspace$,
functional PCA is widely used for a great variety of statistical
purposes in functional data analysis. A standard reference on this
topic is the monograph
\cite{FDA}.

On $\Hilb$ a separable real Hilbert space as above, and for
$(f,g)\in \Hilb^2$ the tensor product $f\otimes g$ is the linear operator
on $\Hilb$ defined by $f\otimes g(h) = \langle f, h\rangle g$. Direct
calculations show that $f\otimes g$ is a Hibert Schmidt operator with
Hilbert-Schmidt norm $\|f\otimes g\|_{\HSH} = \|f\|\|g\|$. We recall
that a linear operator $T$ on $\Hilb$ is \emph{Hilbert-Schmidt} if,
given a Hilbert basis $(e_i)_{i \s 1}$, we have
$\sum_{i\in\nset^*} \|T e_i\|^2 <\infty$. The latter quantity is then
the Hilbert-Schmidt norm of $T$, denoted by $\|T\|_{\HSH}$ and does
not depend on the choice of the Hilbert basis. 
  Hilbert-Schmidt operators are compact and the space $\HSH$ of
  Hilbert-Schmidt operators on $\Hilb$, equipped with
  $\langle\point,\point \rangle_{\HSH}$ the scalar product associated
  with the $\HSH$ norm, is itself a separable Hilbert space. 

Let $X$ be a $\Hilb$-valued random element as above and assume that
$\E\|X\|^2 < \infty$. Then also 
$\EE[ \| X\otimes X\|_{\HSH}]  <\infty$
so that the tensor product inside the expectation is Bochner integrable and one may define the  (non-centered) \emph{covariance operator}
\begin{equation}
  \label{eq:CovarianceOper}
  C =  \EE[ X\otimes X ].
\end{equation}
By construction $C$ is self-adjoint and  $C\in\HSH$, thus $C$ is compact. Also  by linearity of Bochner integration, for any $(h, g) \in\Hilb^2$, we have:
$$
C h = \EE[ \langle h, X \rangle X] \;\text{ and } \langle Ch, g \rangle  = \EE[ \langle h,  X  \rangle \langle X, g\rangle ] . 
$$
A  key result in functional PCA is the eigen decomposition of the covariance operator (see Theorem 7.2.6 from~\cite{hsing2015theoretical} regarding the centered covariance operator, which is also valid for the non-centered one): 
\begin{equation}
  \label{eq:eigenDecompos_covariance}
C = \sum_{i=1}^\infty \lambda_i \varphi_i\otimes \varphi_i  ,
\end{equation}
  where $\lambda_1\ge \lambda_2\ge \dots$ are the eigenvalues sorted by decreasing order and the $\varphi_i$'s are orthonormal eigenvectors. The set of non zero eigenvalues $\lambda_i$ is either finite, or a
  sequence of nonnegative numbers converging to zero. The non zero
  eigenvalues have finite multiplicity.  The eigen functions $\varphi_i$ form a Hilbert basis of  $\overline{\im(C)}$.
As it is the case for the centered version of $C$, the decomposition~\eqref{eq:eigenDecompos_covariance} immediately derives from the
spectral theorem for compact, self-adjoint operators and the fact that $C$ is nonnegative definite.

A useful property of the eigen functions $(\varphi_i)_{i\ge 1}$ is that they allow perfect signal reconstruction, since 
  almost-surely, $X$ may be decomposed as
  \begin{equation}
    \label{eq:KLexp}
  X = \sum_{i = 1}^\infty \langle X,\varphi_i \rangle \varphi_i,    
\end{equation}
see Theorem~7.2.7 in~\cite{hsing2015theoretical}. 
  The \emph{scores} $Z_i  = \langle X , \varphi_i\rangle$ satisfy  $\EE[Z_i^2] = \lambda_i$ and  $\EE[Z_iZ_j]=0$, 
so that the expansion \eqref{eq:KLexp} is called \textit{bi-orthogonal}.  
For all $N\geq 1$, the  truncated  expansion $\sum_{i\leq N}\langle X,\; \varphi_i \rangle \varphi_i$ is \textit{optimal} in the sense that it minimizes  
the integrated mean-squared  error
\begin{equation}\label{eq:meanSqError}
  \E  \Big(\big\| X -\sum_{i=1}^N \langle X,\; u_i\rangle  u_i \big \|^2 \Big)
\end{equation}
over all orthonormal collections $(u_1, \ldots, u_N )$ of $\Hilb$. The tail behavior of the (summable) eigenvalue sequence $(\lambda_i)_{i\geq 1}$ 
describes the optimal $N$-term approximation error, insofar as 
\begin{equation*}
\sum_{i>N}\lambda_i=\E \Big( \big\| X-\sum_{i=1}^N \langle X,\; \varphi_i\rangle  \varphi_i\big\|^2 \Big).
\end{equation*}
Notice that in the present paper we consider non centered covariance operator, mainly for the purpose of alleviating notations. We refer the reader to 
\cite{cadima2009relationships} for a comparison of centered and uncentered PCA.

\begin{remark}[Functional PCA and Karhunen-Loève expansion]
  The \emph{functional PCA} framework is closely related to the
  celebrated \emph{Karhunen-Loève expansion} in the case where $\Hilb = \Lspace$, however both terms refer
  to subtly different frameworks,  which deserves an explanation. The
  former framework (which is the one preferred in this work) relies on
  a $\Hilb$-valued random element $X$, with standard results
  concerning convergence of the expansions of $X$ and its covariance
  operator in the Hilbert norm and Hilbert-Schmidt norm,
  respectively, recalled above. Then $X$'s trajectories are in fact equivalence classes
  of square-integrable functions and the specific value $X_s(\omega)$ of a realisation $X(\omega)$
  at $s\in[0,1]$ is only defined almost everywhere.  In contrast, the latter (Karhunen-Loève) framework   relies
  on a second order stochastic process $X= (X_s, s\in[0,1])$, that is, a collection of random variables,  which is continuous
  is quadratic
  mean with respect to the index $s$. 
  Then one must impose additional joint measurability conditions of
  the mapping $(\omega, s)\mapsto X_s(\omega)$ in order to ensure that
  the process $X$ is indeed a $\Hilb$-valued random
  element.  In such a case the mean functions and the
  covariance operators defined both ways coincide.  Also, the
  celebrated Karhunen-Loève Theorem (\cite{loeveprobability}) ensures
  convergence in quadratic mean of the expansion of $X_s$, uniformly
  over $s\in[0,1]$. In order to avoid another layer of
  technicality, and because our main interest indeed lies in the
  eigenspaces of covariance operators rather than in pointwise
  reconstruction of the
  functions, 
  we adopt in the present work the view where $X$ is a $\Hilb$-valued
  random
  element, although additional joint measurability assumptions may be imposed in order to fit into the Karhunen-Loève framework. 
\end{remark}

\section{Regular Variation in Hilbert Spaces}\label{sec:HT}


As a warm up we discuss a classic example in EVT, a multivariate multiplicative model within the framework of the multiplicative Breiman's lemma (\cite{basrak2002regular}, Proposition~A.1) for which \rv may be easily proved using existing general characterizations such as Equation~\eqref{eq:GRV1}. This example will serve as a basis for our simulated data example in Section~\ref{sec:num}.

\begin{example}\label{prop:example1}
  Let  $Z = (Z_1,\ldots, Z_d) \in \rset^d$ be regularly varying  with index $\alpha>0$ and limit measure $\mu$,  and
  let $A = (A_1,\dots,A_d)$ be  a random vector of $\Hilb$-valued variables $A_i$, independent of  $Z$,  such that $\EE[(\sum_{j=1}^d \|A_j\|_{\Hilb}^2) ^{\gamma/2}] < \infty$ for some $\gamma>\alpha$.  Then,
  $$
  X  = \sum_{j=1}^d Z_j A_j
  $$
  is regularly varying  in $\Hilb$ with limit measure
  $\tilde \mu(\point) = \EE[\mu\{x \in \rset^d: \sum_{j=1}^d A_j x \in (\point) \}]$.
\end{example}
\begin{proof}
  In their  Proposition A.1,  \cite{basrak2002regular} consider  the case where $A_j \in \rset^q$ and $\mb A = (A_1,\ldots, A_d)$ is a $q\times d$ matrix. In the proof, they use the operator norm for $A$, but because all norms are equivalent in that case, their argument remains
  valid with the finite-dimensional Hilbert-Schmidt norm.
  In this finite-dimensional context,  $\|A\|$ is equal to $ (\sum_{j=1}^d \|A_j\|_2^2)^{1/2}$, where $\|\point\|_2$ is the Euclidean norm. 
An inspection of the  arguments in their  proof
shows that they also apply to the case where $A_j \in  \Hilb$, up to replacing  $\|A_j\|_2$ with $\|A_j\|_{\Hilb}$ and  $\|A\|$ with $(\sum_{j=1}^d \|A_j\|_{\Hilb}^2)^{1/2}$.  In particular Pratt's lemma is applicable because Fatou's Lemma is valid for nonnegative Hilbert space valued functions. 
\end{proof}

The remainder of this section  aims at providing  some insight on specific properties of   \rv in $\Hilb$, as compared with \rv in general separable metric spaces as introduced by \cite{hult2006regular} or, at the other end of the spectrum, \rv in a  Euclidean space.  On the one hand, we focus on possible finite-dimensional characterizations of \rv in $\Hilb$, 
with a view towards statistical applications in which abstract convergence conditions in an infinite dimensional space cannot be verified on real data, 
while finite-dimensional conditions may serve as a basis for statistical tests. Although  we do not go as far as proposing such rigorous statistical procedures, we do suggest in the experimental section some convergence diagnostics relying on the results gathered in this section.  On the other hand we discuss the relationships existing between \rv in $\mathcal{C}[0,1]$ and \rv in $\Hilb = L^2[0,1]$.

\subsection{Finite-dimensional Characterizations of Regular Variation in $\Hilb$}
\rv random elements in $\Hilb$ have been present in the literature for
a long time, due to strong connections between \rv and domains of
attraction of stable laws in general and in separable Hilbert spaces
in particular. As an example \cite{kuelbs1973domains} show (through
their Lemma 4.1 and their Theorem 4.11) that a random element in
$\Hilb$ which is in the domain of attraction of a stable law with type
$0<\alpha<2$ is regularly varying. However this connection does not
yield any finite-dimensional characterization which are our main focus
here.

As a first go we recall Proposition 2.1 from \cite{kim2021extremal} making a first connection between regular variation in $\Hilb$ and regular variation of finite dimensional (\fidi in abbreviated form) projections. Let $(e_i,i\in\nset)$ be  a complete orthonormal system in $\Hilb$. For ${\cal I} = (i_1,\ldots i_ N)$    a finite set of indices with cardinality $N\geq 1$, denote by $\pi_{\cal I}$ the `coordinate projection' on the associated finite family, $\pi_{\cal I}(x) = (\langle x, e_{i_1} \rangle,\ldots, \langle x, e_{i_N}  \rangle  ), x\in\Hilb$. In particular we denote by $\pi_N:\Hilb\to \rset^N$ the projection onto the $N$ first elements of the basis $(e_i,i\in\nset)$.
\begin{proposition}[\rv in $\Hilb$ implies multivariate \rv of \fidi projections]\label{prop:jointImpliesFIdi}
  If $X$ a random element of $\Hilb$ is regularly varying with index $\alpha>0$ then also for all finite index set $\mathcal I$ of size $N\geq 1$, $\pi_{\cal I} X$ is multivariate \rv in $\rset^N$.   
\end{proposition}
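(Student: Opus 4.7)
My plan is to deduce the conclusion from the continuous-mapping principle for $\Mzer$-convergence applied to the coordinate projection $\pi_{\mathcal{I}}\colon\Hilb\to\rset^N$. What makes the infinite-dimensional setting non-trivial is the size of $\ker\pi_{\mathcal{I}}$: the projection crushes an infinite-dimensional subspace to $0$, so the real work is to check that preimages of sets bounded away from $0$ in $\rset^N$ remain bounded away from $\mathbf{0}$ in $\Hilb$, which is the geometric condition allowing $\Mzer$-convergence to transfer correctly through $\pi_{\mathcal{I}}$.

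The key observation is that orthonormality of $(e_{i_1},\ldots,e_{i_N})$ makes $\pi_{\mathcal{I}}$ a contraction: $\|\pi_{\mathcal{I}}(x)\|_2 \leq \|x\|$ for every $x\in\Hilb$. I would first use this to argue that for any Borel $A\subset\rset^N\setminus\{0\}$ with $\inf_{y\in A}\|y\|_2 \geq \eta>0$, the preimage $\pi_{\mathcal{I}}^{-1}(A)$ is contained in $\{x\in\Hilb\colon \|x\|\geq \eta\}$ and is thus bounded away from $\mathbf{0}$. Note also that $\pi_{\mathcal{I}}$ is continuous and positively homogeneous.

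Equipped with these ingredients, I would write the regular variation of $X$ in the form $b(n)\,\PP(X/n\in \cdot )\Mto \mu$ on $\Hilb_0$, and for any test function $f\in\mathcal{C}_0(\rset^N\setminus\{0\})$ check that $f\circ \pi_{\mathcal{I}}$ lies in $\mathcal{C}_0(\Hilb_0)$: it is bounded and continuous by continuity of $\pi_{\mathcal{I}}$, and if $f$ vanishes on $\{\|y\|_2<\eta\}$ then the contraction property forces $f\circ \pi_{\mathcal{I}}$ to vanish on $\{\|x\|<\eta\}$. Passing to the limit and using linearity of $\pi_{\mathcal{I}}$ (so that $\pi_{\mathcal{I}}(X/n)=\pi_{\mathcal{I}}(X)/n$) would yield
\[
b(n)\,\PP\bigl(\pi_{\mathcal{I}}(X)/n\in \cdot\bigr)\;\Mto\; \mu\circ\pi_{\mathcal{I}}^{-1} \quad\text{in}\ \Mzer(\rset^N\setminus\{0\}).
\]
The pushforward $\mu\circ\pi_{\mathcal{I}}^{-1}$ belongs to $\Mzer(\rset^N\setminus\{0\})$ by the preimage bound above, and inherits $(-\alpha)$-homogeneity from $\mu$ via the identity $\pi_{\mathcal{I}}^{-1}(tA)=t\,\pi_{\mathcal{I}}^{-1}(A)$ valid for $t>0$.

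The main obstacle I anticipate is the possibility that $\mu\circ\pi_{\mathcal{I}}^{-1}$ vanishes identically on $\rset^N\setminus\{0\}$, which is equivalent to $\mu$ being concentrated on the infinite-codimensional subspace $\ker\pi_{\mathcal{I}}$. Under the (standard) convention that allows a trivial limit in the definition of multivariate regular variation, the statement is then complete; otherwise one restricts attention to index sets $\mathcal{I}$ avoiding this degeneracy, for instance those associated with an eigenbasis adapted to the covariance operator of $\Panginf$ to be introduced in Section~\ref{sec:KL}. In either case the scaling function $b$ and the tail index $\alpha$ are those of $X$, in agreement with Proposition~\ref{prop:polarRVCond}\ref{cond_rv_radius}.
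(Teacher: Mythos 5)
Your argument is correct and takes essentially the same route as the paper: the paper cites this statement from Kim and Kokoszka and re-derives the same implication inside Proposition~\ref{prop:RVfidi} (step 1.\ $\Rightarrow$ 2.) by invoking the continuous mapping theorem for $\Mzer$-convergence (Theorem~2.5 of Hult--Lindskog) applied to the continuous, linear, homogeneous map $\pi_N$; your contraction bound $\|\pi_{\mathcal I}(x)\|_2\le\|x\|$, showing that preimages of sets bounded away from $0$ in $\rset^N$ remain bounded away from $\mathbf{0}$ in $\Hilb$, is precisely the hypothesis of that theorem spelled out by hand. Your caveat that $\mu\circ\pi_{\mathcal I}^{-1}$ may vanish when $\mu$ concentrates on $\ker\pi_{\mathcal I}$ is a genuine subtlety that the paper leaves implicit, and handling it by convention or by restricting the index set is a reasonable resolution.
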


One natural question to ask is whether the reciprocal of Proposition~\ref{prop:jointImpliesFIdi} is true.  We answer  in  the negative in Proposition~\ref{prop:CE} below. 

\begin{proposition}[Multivariate \rv of \fidi projections does not imply \rv in $\Hilb$]\label{prop:CE}
  The reciprocal of Proposition~\ref{prop:CE} is not true. In particular there exists a random element $X$ in $\Hilb$ which is not \rv, while
  \begin{enumerate}
  \item \label{prop:CE:1}  for all $N\in\nset^*$, $\pi_N X$ is multivariate \rv in $\rset^N$ with same index $\alpha>0$ ;
  \item \label{prop:CE:2} the norm of $X$ is \rv in $\rset$ with index $\alpha$. 
  \end{enumerate}
\end{proposition}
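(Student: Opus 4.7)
The plan is to construct an explicit counter-example exhibiting a mass-escape phenomenon in the angular distribution of $X$ that is invisible to any finite-dimensional projection. Fix an orthonormal basis $(e_k)_{k\geq 1}$ of $\Hilb$, let $R$ be a standard Pareto($\alpha$) random variable, $B$ an independent Bernoulli(1/2), and $N:[1,\infty)\to\nset$ a deterministic function with $N(r)\geq 2$ and $N(r)\to\infty$ as $r\to\infty$ (for instance $N(r) = \lfloor\log r\rfloor + 2$). I would set
\[
X \;=\; (1-B)\, R\, e_1 \;+\; B\, R\, e_{N(R)}.
\]
The intuition is that half of the mass stays concentrated along the axis $\rset e_1$, while the other half drifts toward ever higher coordinates as $R$ grows, so that the angular mass escapes along the basis, but outside every fixed finite block of coordinates.

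Verifying items~\ref{prop:CE:1} and~\ref{prop:CE:2} is then routine. Since both terms above have norm $R$, $\|X\| = R$ is Pareto($\alpha$), giving item~\ref{prop:CE:2} immediately. For item~\ref{prop:CE:1}, fix $N\geq 1$ and choose $t_N$ large enough that $r>t_N$ forces $N(r)>N$; then for $t>t_N$ and any Borel $A\subset\rset^N$ bounded away from $0$, the branch $B=1$ contributes $\pi_N X = 0\notin A$, so
\[
\PP[\pi_N X \in tA] \;=\; \tfrac{1}{2}\,\PP[R\,e_1\in tA].
\]
Multivariate regular variation of $\pi_N X$ with index $\alpha$ follows at once, the limit measure being one half of that of $R e_1$, concentrated on the half-axis $\rset_+ e_1$.

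The core step is to show that $X$ itself is not regularly varying in $\Hilb$. By Proposition~\ref{prop:polarRVCond}, and since $\|X\|$ is regularly varying, it suffices to show that $\Pangt$ does not converge weakly to any probability measure on $\sphere$. Assume for contradiction that $\Pangt\wto Q$ for some probability $Q$ on the sphere, and consider the bounded continuous functions $h_N(\theta) = \sum_{k=1}^N \langle\theta,e_k\rangle^2$ on $\sphere$, indexed by $N\geq 1$. Conditioning on $R>t$ with $t>t_N$, the branch $B=0$ gives $\Theta=e_1$ with $h_N(\Theta)=1$, while the branch $B=1$ gives $\Theta=e_{N(R)}$ with $N(R)>N$ and hence $h_N(\Theta)=0$; therefore $\EE[h_N(\Theta)\mid R>t] = \tfrac{1}{2}$, so that $\int h_N \ud Q = \tfrac{1}{2}$ for every $N$. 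Monotone convergence applied to $h_N(\theta)\uparrow \|\theta\|^2 = 1$ on $\sphere$ then yields $\int h_N \ud Q \to Q(\sphere) = 1$, contradicting the previous identity.

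The conceptual point to get right, and the reason the construction succeeds at all, is the non-compactness of $\sphere\subset\Hilb$: the angular mass has somewhere to escape to, so a weakly convergent subsequence of $\Pangt$ need not have a probability limit on the sphere. This is precisely the loophole that prevents the Prokhorov-type compactness arguments available in the Euclidean setting from closing the gap between finite-dimensional and joint regular variation in infinite dimensions. Apart from this conceptual point, the verification reduces to two elementary computations with the Pareto law and a single monotone-convergence argument.
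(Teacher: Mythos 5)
Your construction is correct, and it takes a genuinely different route from the paper's. The paper's counterexample is also a ``mass escapes along the basis'' mixture, but a richer one: conditionally on $R$, the angle is $e_i$ with probability proportional to $1/i$ for $i\le\floor{R}$, so that proving \rv of the \fidi\ projections requires a Karamata-type computation (the normalizing sum $\sum_{l\le\floor{R}}1/l$ is slowly varying), and non-\rv\ is established by showing that the family $\Pangt$ violates the asymptotic finite-dimensionality criterion~\eqref{eq:asymptoticFidi}, invoking Theorem~1.8.4 of van der Vaart--Wellner. Your two-branch construction $X=(1-B)Re_1+BRe_{N(R)}$ is more elementary on both counts: the escaping branch contributes \emph{exactly} zero to $\{\pi_NX\in tA\}$ once $t$ is large (relative to $A$), so \fidi\ \rv\ reduces to that of $Re_1$ with no Karamata argument; and non-\rv\ is obtained by testing a putative weak limit $Q$ of $\Pangt$ against $h_N(\theta)=\sum_{k\le N}\langle\theta,e_k\rangle^2$ and contradicting $Q(\sphere)=1$ via monotone convergence, which bypasses the tightness machinery entirely. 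Two small points to tighten: $(i)$ in the \fidi\ step the threshold cannot be the single $t_N$ you name --- on the branch $B=1$ with $N(R)\le N$ one has $\pi_NX=Re_{N(R)}\ne 0$; what saves you is that $\|\pi_NX\|\le r_N$ on that branch (where $r_N=\sup\{r:N(r)\le N\}$), so the branch is excluded from $\{\pi_NX\in tA\}$ only once $t\inf_{x\in A}\|x\|>r_N$, i.e.\ the threshold depends on $A$ (which is harmless for $\Mzer$-convergence); $(ii)$ it is worth one line to note that the contradiction $\int h_N\,\ud Q=\tfrac12$ for all $N$ versus $\int h_N\,\ud Q\to 1$, combined with \rv\ of $\|X\|$, rules out \rv\ of $X$ precisely by the polar characterization of Proposition~\ref{prop:polarRVCond}, which is the same pivot the paper uses. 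A minor trade-off: in the paper's example the conditional angular laws of the \fidi\ projections converge to nondegenerate limits (the normalized harmonic weights), whereas in yours they degenerate onto $e_1$; both satisfy the statement, but the paper's version also illustrates that consistent nondegenerate \fidi\ angular limits still do not force \rv\ in $\Hilb$.
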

\begin{proof}[Sketch of proof]
We construct a random element $X$ in $\Hilb$ in such a way that the probability mass of its angular component $\Theta$, given the radial component $R$,  escapes at infinity as $R$  grows. Here, \emph{at infinity} must be understood as $\Span(e_i,  i\ge M)$ as $M\to\infty$. Namely let  $X := R  \Theta$  with radial component $R =\| X \| \sim Pareto(\alpha)$ on $[1, +\infty [$ (\ie $\forall t \s 1, \PP[R_0 \s t] = t^{-\alpha}$) and define the conditional distribution of $\Theta$ given $R$ as the mixture of Dirac masses:   
\begin{equation*}
\mathcal{L}(\Theta | R ) = \frac{1}{\sum_{l=1}^{\floor{R}} 1/l}\sum_{i=1}^{\floor{R}} \frac{1}{i} \delta_{e_i}. 
\end{equation*}
In other words, for $i\le R$, we have  $\Theta = e_i$ with probability proportional to $1/i$. The remaining of the proof, deferred to the Appendix, consists in verifying that $(i)$ all finite-dimensional projections of $X$ are \rv ; $(ii)$ asymptotic finite-dimensionality (see Equation~\eqref{eq:asymptoticFidi}) of the family of conditional distributions $\Pangt$ does not hold, hence it may not converge to any limit distribution, so that Condition~\ref{cond_weakcv_angle} from Proposition~\ref{prop:polarRVCond} does not hold and $X$ may not be \rv. 
  
\end{proof}

The counter-example above suggests that the missing assumption to obtain \rv in $\Hilb$ is some  relative compactness criterion. This is partly  confirmed in the next example where the angular variables $\Theta_t$ is again a mixture model supported by the $e_i$'s but where the probability mass for the conditional distribution of $\Theta$ given $\|  X \|$ concentrates around finite-dimensional spaces. 
The proof, postponed  to the Appendix, proceeds by verifying both conditions from Proposition~\ref{prop:polarRVCond}. 
\begin{example}\label{prop:ex3}
Let $R \sim Pareto(\alpha)$ on $[1,+ \infty[$ and define $\Theta $ through  its conditional distribution given $R=r$, for $r \s 1$, 
\begin{equation}
\mathcal{L}(\Theta | R = r) = \frac{1}{\sum_{l=1}^{\floor{r}} 1/l^2}\sum_{i=1}^{\floor{r}} \frac{1}{i^2} \delta_{e_i}. 
\end{equation}
In words, $\Theta \in \{e_1,e_2,...\}$ and $\forall r \s 1, \forall j \in \bb{N}^*$ such that $ j\m r$, we have $ \PP[\Theta = e_j | R = r] = \frac{1/j^2}{\sum_{l=1}^{\floor{r}} 1/l^2}$.

Then, the random element  $X = R \Theta$ 
is regularly varying in $\bb{H}$ with index $\alpha$ with limit angular random variable $\Theta_\infty$ given by
\begin{equation}
\PP[\Theta_\infty = e_j] = \frac{6}{(\pi j)^2},
\end{equation}
for $j \in \bb{N}^*$.
\end{example}

The next proposition confirms the intuition built up by the above examples that asymptotic finite dimensionality is a necessary additional assumption to \rv of finite dimensional projections and of the norm.

\begin{proposition} \label{prop:RVfidi} Let $X$ be a $\Hilb$-valued
  random element. The three conditions below are equivalent. 
  \begin{enumerate}
  \item $X$ is regularly varying in $\Hilb$ with index $\alpha>0$,
    limit measure $\mu$ and normalizing sequence $b_n>0$, \ie\
    $\mu_n = b_n\PP[X\in n \point] \Mto \mu(\point)$.
  \item The family of measures $(\mu_n)_{n \ge 1}$ is relatively
    compact in $\Mzer(\Hilb)$-topology, and for all $N \in \nset$,
    $\pi_N X$ is regularly varying in $\rset^N$ with limit measure
    $\mu_N = \mu\circ\pi_N^{-1}$, index $\alpha$ and scaling sequence
    $b_n$.
    \item The family of measures $(\mu_n)_{n \ge 1}$ is relatively
    compact in $\Mzer(\Hilb)$-topology, and for all $h \in \Hilb_0$,
    $\langle x,h \rangle $ is regularly varying in $\rset$ with limit measure
    $\mu_h = \mu\circ (h^*)^{-1}$, index $\alpha$ and scaling sequence
    $b_n$, where $h^* (x) = \langle h, x\rangle$. 
  \end{enumerate}
\end{proposition}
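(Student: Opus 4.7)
The plan is to establish $(1)\Leftrightarrow (2)$ via a subsequential compactness argument together with a cylindrical approximation, and then to derive $(2)\Leftrightarrow (3)$ as a finite-dimensional Cram\'er-Wold consequence. The forward implications $(1)\Rightarrow (2)$ and $(1)\Rightarrow (3)$ are immediate: $\Mzer$-convergence of $(\mu_n)$ implies relative compactness, and the continuous mapping theorem applied to $\pi_N$ (respectively $h^*$) yields regular variation of $\pi_N X$ in $\rset^N$ (respectively $\langle X, h\rangle$ in $\rset$) with the claimed limit measure. The crucial point is that $\pi_N$ and $h^*$ are $\Mzer$-continuous, because $\|\pi_N(x)\|\leq \|x\|$ and $|h^* x|\leq \|h\|\|x\|$ guarantee that preimages of sets bounded away from zero in the target space are bounded away from $\mathbf{0}$ in $\Hilb$.

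For the reverse implication $(2)\Rightarrow (1)$, I proceed by subsequential extraction. By relative compactness in $\Mzer(\Hilb)$, every subsequence of $(\mu_n)$ admits a further $\Mzer$-convergent subsequence $\mu_{n_k}\Mto \mu^\star$ for some $\mu^\star\in\Mzer(\Hilb)$. Applying the already-proved forward implication to $\mu^\star$ yields $\mu^\star\circ \pi_N^{-1} = \mu\circ \pi_N^{-1}$ for every $N\geq 1$. The \emph{main obstacle} is then to conclude that $\mu^\star = \mu$, i.e.\ that any $\Mzer$-measure on $\Hilb_0$ is determined by its finite-dimensional projections. I would handle this via a cylindrical approximation argument: given $f\in\mathcal{C}_0(\Hilb)$ vanishing on $\{\|y\|<\varepsilon\}$, introduce the finite-rank orthogonal projector $P_N(x) = \sum_{i=1}^N \langle x, e_i\rangle e_i$ and the auxiliary function $g_N(y) = f(\sum_{i\leq N} y_i e_i)$ for $y\in \rset^N$. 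Then $f\circ P_N = g_N\circ \pi_N$ is a cylindrical function in $\mathcal{C}_0(\Hilb)$ (since $\|P_N(x)\|\leq \|x\|$ forces it to vanish on $\{\|x\|<\varepsilon\}$), so that $\int f\circ P_N\,d\mu^\star$ depends only on $\mu^\star\circ \pi_N^{-1}$. Combining pointwise convergence $P_N(x)\to x$, continuity of $f$, and the uniform domination $|f\circ P_N|\leq \|f\|_\infty\mathbf{1}_{\{\|x\|\geq \varepsilon\}}$ (with the right-hand side $\mu^\star$-integrable since $\mu^\star(\{\|x\|\geq \varepsilon\})<\infty$), dominated convergence yields $\int f\,d\mu^\star=\lim_N \int g_N\,d\mu_N$, and the same identity holds with $\mu$ in place of $\mu^\star$; thus $\mu^\star=\mu$. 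Uniqueness of the subsequential limit combined with relative compactness then gives $\mu_n\Mto \mu$, which is exactly $(1)$.

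Finally, to derive $(1)$ from $(3)$, it suffices to show $(3)\Rightarrow (2)$. Relative compactness of $(\mu_n)$ in $\Mzer(\Hilb)$ transfers, via the $\Mzer$-continuous push-forward by $\pi_N$, to relative compactness of $(\mu_n\circ \pi_N^{-1})$ in $\Mzer(\rset^N_0)$. For every $t\in\rset^N$, setting $h_t=\sum_{i\leq N} t_i e_i$ yields the identity $h_t^* = t^*\circ \pi_N$, so that the univariate regular variation assumed in $(3)$ forces any subsequential limit $\tilde\mu_N$ of $(\mu_n\circ \pi_N^{-1})$ to satisfy $\tilde \mu_N\circ (t^*)^{-1} = (\mu\circ \pi_N^{-1})\circ (t^*)^{-1}$ for every $t\in\rset^N$. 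The classical Cram\'er-Wold principle for $\Mzer$-measures in the finite-dimensional space $\rset^N$, obtainable by considering the finite measures arising from restriction to $\{|y|>\varepsilon\}$ and applying characteristic-function arguments, then gives $\tilde \mu_N = \mu\circ \pi_N^{-1}$. Hence the whole sequence $(\mu_n\circ \pi_N^{-1})$ converges to $\mu\circ \pi_N^{-1}$, establishing $(2)$ and completing the proof via the already-established $(1)\Leftrightarrow(2)$.
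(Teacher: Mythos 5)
Your overall architecture coincides with the paper's: both reverse implications are obtained from relative compactness by showing that any two subsequential $\Mzer$-limits must coincide because they share the same finite-dimensional (resp.\ one-dimensional) projections, and the forward implications are the continuous mapping theorem in $\Mzer$ applied to $\pi_N$ and $h^*$. Where you genuinely differ is in how uniqueness of the subsequential limit is certified. For $(2)\Rightarrow(1)$ the paper invokes a $\pi$-system argument on the cylinder sets $\pi_N^{-1}(A)$; your cylindrical approximation $f\circ P_N\to f$ with the domination $|f\circ P_N|\le \|f\|_\infty \1\{\|x\|\ge\varepsilon\}$ is a correct and arguably more self-contained alternative: it works directly with test functions in $\mathcal{C}_0$ and sidesteps the $\sigma$-finiteness-along-the-$\pi$-system check that a Dynkin-class argument tacitly requires for measures that are infinite near the origin. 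This part of your proof is sound.

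The soft spot is the Cram\'er--Wold step in $(3)\Rightarrow(2)$. The reduction you sketch --- restrict $\tilde\mu_N$ to $\{|y|>\varepsilon\}$ to obtain a finite measure and apply characteristic functions --- does not interface with the available data: the hypothesis controls $\tilde\mu_N\circ(t^*)^{-1}$ on Borel sets of $\rset$ bounded away from $0$, which is \emph{not} the one-dimensional projection of the restricted measure $\tilde\mu_N\vert_{\{|y|>\varepsilon\}}$; restriction in the source does not commute with push-forward by $t^*$. What you actually need is that a measure in $\Mzer(\rset^N)$ is determined by the collection of its one-dimensional push-forwards away from the origin, \ie by the half-space data $\nu\{y:\langle t,y\rangle>r\}$ for $r>0$ and all $t$. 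This is a genuinely delicate point --- it is precisely the Cram\'er--Wold problem for regularly varying measures studied by Basrak--Davis--Mikosch and by Boman--Lindskog, where determination can fail for integer indices --- and your sketch does not resolve it. In fairness, the paper's own one-line justification of $3.\Rightarrow 1.$ (that the maps $h^*$ generate the Borel $\sigma$-field) is no more complete, since the class $\{(h^*)^{-1}(B)\}$ is not a $\pi$-system and agreement of two measures on a generating class does not by itself imply agreement on the generated $\sigma$-field. So your route through $(2)$ is at least as defensible as the paper's direct argument, but the finite-dimensional Cram\'er--Wold lemma you rely on must be stated and proved (or cited) with care rather than asserted.
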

\begin{proof}
  \noindent {\bf 1. $\Rightarrow$ 2. and 3.  }
  
  If $X$ is \rv as in the statement 1., then $(\mu_n)_{n\geq 1}$
  converges in the $\Mzer(\Hilb)$ topology and the family is of course
  relatively compact. Also  fix $N\geq 1$ and
  notice that $\pi_N$ is a continuous mapping from
  $(\Hilb, \|\point\|)$ to $\rset^N$ endowed with the Euclidean
  norm.  The same is true for the bounded linear functional  $h^*$.  The continuous mapping theorem in $\Mzer$ (see
  \cite{hult2006regular}, Theorem~2.5) ensures that
  $\mu_n\circ\Pi_N^{-1} \Mto \mu\circ\pi_N^{-1}$ in $\rset^N$ and that
  $\mu_n\circ (h^*)^{-1} \Mto \mu \circ (h^*)^{-1}$ in $\rset$.

  \noindent {\bf 2. $\Rightarrow$ 1.}
If $\mu_n$ is relatively compact, the sequence $\mu_n$
  converges in $\Mzer(\Hilb)$ \Iff any two subsequential limits
  $\mu^1,\mu^2$ coincide. However it follows form the previous 
  implication that in such a case, the finite dimensional projections
  of $\mu^1$ and $\mu^2$ coincide, namely
  $\mu^1\circ\pi_N^{-1} = \mu^2\circ\pi_N^{-1} = \mu\circ\pi_N^{-1}$,
  for all integer $N$.  Consider the family of cylinder sets of
  $\Hilb$ with measurable base,
  $\mathcal C = \{ \pi_N^{-1} (A), A\in \mathcal B(\rset^N), N \in
  \nset^*\}$. On $\mathcal{C}$ the measures $\mu,\mu^1,\mu^2$
  coincide. The cylinder sets family  $\mathcal{C}$ is a $\pi$-system which generates the Borel $\sigma$-field, because it is associated to the family of bounded linear functional $(e_i^*, i\in\nset)$ which separates points. Thus $\mu,\mu^1,\mu^2$ coincide on every Borelian set, and the proof is complete.

  \noindent {\bf 3. $\Rightarrow$ 1.} As above, it is enough to show that two subsequential limits $\mu^1,\mu^2$  coincide. In this case it is obviously so, because it is known that the Borel $\sigma$-field on $\Hilb$ is generated by the mappings $h^*$ (\emph{e.g.} \cite{hsing2015theoretical}, Theorem 7.1.1.).      
\end{proof}

  The line of thought of Proposition~\ref{prop:RVfidi} may  be pursued further by characterizing the property of relative compactness of a  family  $(\nu_n)_{ n\in\nset} \in \Mzer(\Hilb)$ through asymptotic finite-dimensionality (see Equation~\eqref{eq:asymptoticFidi}), following the lines of the proof of  Theorem 4.3 in \cite{hult2006regular}, relying in particular on Theorem 2.6 of the cited reference. 
  However it is also possible to rely  on  known characterizations of relative compactness for probability measures, coupled with the polar characterization of \rv (Proposition~\ref{prop:polarRVCond}). We propose in this spirit the following simple characterization solely based on weak convergence of univariate and finite-dimensional projections, together with regular variation of the norm, without additional requirements regarding asymptotic finite-dimensionality.  

\begin{theorem} \label{thm:thmRVH}
  Let $X$ be a random element in $\Hilb$ and let $\Theta_t$  be a random element in $\Hilb$ distributed on the sphere $\sphere$  according to the conditional angular distribution $\Pangt$. 
  Let $\Panginf$ denote a probability measure on $(\sphere, \mathcal{B}(\sphere))$ and let 
  $\Theta_\infty$ be a random element distributed according to $\Panginf$. 
  The following statements are equivalent. 
\begin{enumerate}
\item \label{thmRVDefault} $X$ is regularly varying with index $\alpha$ with limit angular measure $\Panginf$, so that $\Pangt\wto\Panginf$.
    
\item \label{thmRVH2} $\|X\|$ is regularly varying in $\rset$ with index $\alpha$, and
  $$\forall h \in \bb{H}, \langle \Theta_t, h \rangle \wto \langle \Theta_\infty,h \rangle \quad \text{ as } t\to\infty. $$
    
\item \label{thmRVH-fidi} $\|X\|$ is regularly varying in $\rset$ with index $\alpha$, and
  $$\forall N \in\nset, \pi_N(\Theta_t) \wto \pi_N(\Theta_\infty) \quad \text{ as } t\to\infty. $$
              
\end{enumerate}
\end{theorem}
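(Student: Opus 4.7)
The plan is to establish the cycle $1 \Rightarrow 3 \Leftrightarrow 2 \Rightarrow 1$ by reducing everything to Proposition~\ref{prop:polarRVCond} combined with Theorem~\ref{vdv184}. The only non-trivial implication will be $3 \Rightarrow 1$, whose central difficulty is verifying the asymptotic finite-dimensionality of the family $(\Theta_t)_{t>0}$; the equivalence $2 \Leftrightarrow 3$ is a routine consequence of the a.s.\ identity $\|\Theta_t\|=1$.

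For $1 \Rightarrow 3$, Proposition~\ref{prop:polarRVCond} yields $\Pangt \wto \Panginf$ as Borel probability measures on $\sphere$. Since $\pi_N : \Hilb \to \rset^N$ is a bounded linear operator, the continuous mapping theorem immediately gives $\pi_N(\Theta_t) \wto \pi_N(\Theta_\infty)$, while regular variation of $\|X\|$ is already part of Proposition~\ref{prop:polarRVCond}. For the equivalence $3 \Leftrightarrow 2$, the direction $2 \Rightarrow 3$ is the Cramér-Wold device in $\rset^N$. For $3 \Rightarrow 2$, given $h \in \Hilb$, write $h = \sum_i h_i e_i$ and truncate to $h^{(N)} = \sum_{i\le N} h_i e_i$; the linear combination $\langle \Theta_t, h^{(N)}\rangle$ is a continuous function of $\pi_N(\Theta_t)$ and converges weakly to $\langle \Theta_\infty, h^{(N)}\rangle$. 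Because $\|\Theta_t\|=1$ almost surely, Cauchy-Schwarz yields the deterministic bound $|\langle \Theta_t, h-h^{(N)}\rangle| \le \|h-h^{(N)}\|$, which vanishes as $N\to\infty$ uniformly in $t$. A Slutsky-type argument then promotes the truncated weak convergence to $\langle \Theta_t, h\rangle \wto \langle \Theta_\infty, h\rangle$.

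The heart of the argument is $3 \Rightarrow 1$. By Proposition~\ref{prop:polarRVCond} it suffices to show $\Pangt \wto \Panginf$ on $\sphere$; since $\sphere$ is closed in $\Hilb$ and $\Panginf$ is supported on $\sphere$, it is equivalent to verify weak convergence of $\Theta_t$ to $\Theta_\infty$ as $\Hilb$-valued random elements. Theorem~\ref{vdv184} then reduces the problem to asymptotic finite-dimensionality of $(\Theta_t)_{t>0}$, the finite-dimensional convergence being given by hypothesis. For fixed $N\geq 1$ and $\delta \in (0,1)$, the identity $\|\Theta_t\|^2 = 1$ yields
\begin{equation*}
\Bigl\{\sum_{i > N} \langle \Theta_t, e_i\rangle^2 > \delta\Bigr\} \;=\; \bigl\{\|\pi_N \Theta_t\|^2 < 1 - \delta\bigr\} \;\subset\; \bigl\{\|\pi_N \Theta_t\|^2 \le 1-\delta\bigr\},
\end{equation*}
and the rightmost event corresponds to a closed subset of $\rset^N$. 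Applying the Portmanteau theorem to $\pi_N \Theta_t \wto \pi_N \Theta_\infty$ gives
\begin{equation*}
\limsup_{t\to\infty} \PP\Bigl[\sum_{i>N}\langle\Theta_t, e_i\rangle^2 > \delta\Bigr] \;\le\; \PP\Bigl[\sum_{i>N}\langle\Theta_\infty, e_i\rangle^2 \ge \delta\Bigr] \;\le\; \delta^{-1}\bigl(1 - \EE\|\pi_N \Theta_\infty\|^2\bigr),
\end{equation*}
the last inequality by Markov's inequality. By monotone convergence, $\EE\|\pi_N\Theta_\infty\|^2 \uparrow \EE\|\Theta_\infty\|^2 = 1$, so the right-hand side becomes smaller than any prescribed $\varepsilon>0$ for $N$ large enough, which is the desired asymptotic finite-dimensionality.

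The main obstacle is precisely this final step: we must extract a tightness-type condition from finite-dimensional marginal information alone, even though the unit sphere of $\Hilb$ is \emph{not} compact. The crucial leverage is the fact that the angular variables are almost surely supported on $\sphere$, so that $\|\Theta_t\|^2 = 1$ turns the tail $\sum_{i>N}\langle \Theta_t, e_i\rangle^2$ into $1 - \|\pi_N \Theta_t\|^2$, a continuous functional of the finite-dimensional projection---a trick that would be unavailable for generic Hilbert-valued random elements.
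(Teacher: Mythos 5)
Your proof is correct, and its overall skeleton matches the paper's: both reduce the problem \emph{via} Proposition~\ref{prop:polarRVCond} and Theorem~\ref{vdv184} to verifying asymptotic finite-dimensionality of the family $(\Pangt)_{t>0}$, and both ultimately exploit the same leverage, namely that $\|\Theta_t\|=1$ almost surely. Where you genuinely diverge is in \emph{how} asymptotic finite-dimensionality is established. The paper invokes an external criterion (from Tsukuda) stating that convergence of the second moments $\EE[\|Z_n\|^2]\to\EE[\|Z\|^2]$ and $\EE[\langle Z_n,e_j\rangle^2]\to\EE[\langle Z,e_j\rangle^2]$ suffices, then checks these conditions using $\|\Theta_t\|=1$ and the bounded continuous function $z\mapsto\min(z^2,1)$. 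You instead give a direct, self-contained argument: rewrite the tail event as $\{\|\pi_N\Theta_t\|^2<1-\delta\}$, apply the Portmanteau theorem to the closed ball in $\rset^N$, and control the limiting probability by Markov's inequality together with $\EE\|\pi_N\Theta_\infty\|^2\uparrow 1$. Your route avoids any citation and makes transparent exactly where the unit-norm constraint enters; the paper's route is shorter on the page and delegates the measure-theoretic work to a quotable lemma that applies beyond the sphere-supported case. Your explicit treatment of $3\Leftrightarrow 2$ by truncating $h$ and using the deterministic bound $|\langle\Theta_t,h-h^{(N)}\rangle|\le\|h-h^{(N)}\|$ is also sound, though the paper sidesteps it by letting Theorem~\ref{vdv184} accept either condition. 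No gaps.
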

\begin{proof}
  The fact that \ref{thmRVDefault} implies \ref{thmRVH2} and~\ref{thmRVH-fidi} is a direct consequence of the polar  characterization of \rv (Proposition~\ref{prop:polarRVCond}) and of the continuous mapping theorem applied to the bounded linear mappings  $h^*$, $h\in\Hilb$ and $\pi_N$, $N \in\nset$.
  
 For the reverse implications (\ref{thmRVH-fidi}$\Rightarrow$ \ref{thmRVDefault}) and (\ref{thmRVH2} $\Rightarrow$ \ref{thmRVDefault}), in view of Proposition~\ref{prop:polarRVCond},   
 we only  need to verify that for any sequence $t_n>0$ such that $t_n\to\infty$, $\Theta_{t_n}\wto\Theta_\infty$ in $\Hilb$. From Theorem~\ref{vdv184}, if either Condition~\ref{thmRVH2} or Condition~\ref{thmRVH-fidi} holds true, then it will be so  \Iff the family $P_{\Theta, t_n}, n\in \nset$ is asymptotically finite-dimensional. 
  
  We use the fact, stated and proved in \cite{tsukuda2017change}, that
  if $(Z_n,n\in\nset)$ and  $Z$ are 
  $\Hilb$-valued random elements such that, as
  $n\to\infty$, 
\begin{equation}\label{eq:cvMomentsNorm}
\E[ \|Z_n \|^2] \rightarrow \E[ \|Z\|^2],
\end{equation}
and for all $j\in \mathbb{N}^*$
\begin{equation}\label{eq:cvMomentsProj}
\E[ \langle Z_n,e_j \rangle^2] \rightarrow \E[ \langle Z,e_j \rangle^2],
\end{equation}
then the sequence $(Z_n)_{n\in\nset}$ is asymptotically finite-dimensional.

With $Z_n = \Theta_{t_n}$ and $Z = \Theta_\infty$, Condition~\eqref{eq:cvMomentsNorm} above is immediately satisfied since $\|\Theta_{t_n} \|=  \|\Theta_\infty\| = 1$ almost surely. For the same reason $\EE[  \langle \Theta_{t_n},e_j \rangle^2] = \EE[\varphi(\langle \Theta_{t_n},e_j \rangle)]$,  where $\varphi$ is the bounded, continuous function $\varphi(z) = \min(z^2, 1)$.  Thus, weak convergence of the projections 
$\langle \Theta_{t_n},e_j \rangle$ (Condition~\ref{thmRVH2} or~\ref{thmRVH-fidi} from the statement) together with the continuous mapping theorem 
 imply~\eqref{eq:cvMomentsProj}, which concludes the proof. 
\end{proof}

\subsection{Regular Variation in $\Lspace$ \textit{vs} Regular Variation in  $\mathcal{C}[0,1]$}\label{sec:rvL2-C}
Turning  to the case where $\Hilb = \Lspace$, we discuss the
relationships between the notions of regular variation in $\Lspace$
and in $\mathcal{C}[0,1]$, the space of continuous functions on $[0,1]$. Indeed, any continuous stochastic process
$(X_t, t \in [0,1])$ is also a random element in $\Hilb = \Lspace$, as
proved in~\cite{hsing2015theoretical}, Theorem~7.4.1, or  7.4.2. 
It is thus
legitimate to ask whether regular variation with respect to one norm
implies regular variation for the other norm for such stochastic
processes. 

\begin{proposition}\label{prop:RVC0=>RVL2}
Let $X$ be a continuous process over $[0,1]$. Assume that $X \in RV_{-\alpha}(\mathcal{C}[0,1])$, with  $\mathcal{L}(X/\|X\|_\infty | \|X\|_\infty > t)\rightarrow \mathcal{L}(\Theta_{\infty,\infty})$, as $t \rightarrow +\infty$, where $\Theta_{\infty,\infty}$ is the angular limit process w.r.t. the sup-norm $\|\cdot \|_\infty$.
Then, $X \in RV_{-\alpha}( \Lspace )$, and the  angular limit process $\Theta_{\infty,2}$ (w.r.t. the $L^2$ norm  $\|\cdot\|$) has distribution given by 
\begin{equation}\label{eq:RVC0=>RVL2}
\PP[\Theta_{\infty,2} \in B ] = \frac{\E \big[ \| \Theta_{\infty,\infty} \|^\alpha \1\{\Theta_{\infty,\infty} / \|\Theta_{\infty,\infty}\| \in B \} \big]}{\E \big[ \| \Theta_{\infty,\infty} \| ^\alpha \big]},
\end{equation}
where $B \in \mathcal{B}(\sphere_{2})$. 
\end{proposition}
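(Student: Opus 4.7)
The plan is to deduce regular variation in $\Lspace$ from regular variation in $\mathcal{C}[0,1]$ by applying the $\Mzer$-continuous mapping theorem (Theorem~2.5 in \cite{hult2006regular}) to the natural inclusion $i:\mathcal{C}[0,1]\to \Lspace$, and then to identify the $L^2$-angular limit by writing the pushforward of the limit measure in polar form relative to $\|\cdot\|$.

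For the first step, I would verify the three hypotheses of the $\Mzer$-CMT for $i$. Continuity is immediate since $\|f\|\leq \|f\|_\infty$ on $[0,1]$ (Cauchy--Schwarz, the interval having Lebesgue measure $1$). The map $i$ sends $\mathcal{C}[0,1]\setminus\{0\}$ into $\Lspace\setminus\{0\}$, because any continuous function that is not identically zero has strictly positive $L^2$ norm. Finally, Borel sets bounded away from $\mathbf{0}$ in $\Lspace$ pull back to sets bounded away from $\mathbf{0}$ in $\mathcal{C}[0,1]$, by the same sup-norm/$L^2$-norm comparison. The CMT then yields $b(n)\PP[i(X)\in n(\cdot)]\Mto i_*\mu$ in $\Mzer(\Lspace)$, where $\mu$ denotes the $\mathcal{C}[0,1]$-limit measure. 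Since $i_*\mu\neq 0$, this proves $X\in\RV_{-\alpha}(\Lspace)$ with limit measure $i_*\mu$ and the same scaling sequence $b(n)$.

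For the second step, I would choose the normalization $b(n)=1/\PP[\|X\|_\infty>n]$, so that the hypothesized convergence $\mathcal{L}(X/\|X\|_\infty\mid\|X\|_\infty>t)\wto\mathcal{L}(\Theta_{\infty,\infty})$ combined with the $-\alpha$-homogeneity of $\mu$ gives the polar factorization
\[
\mu(d(r\theta)) = \alpha\, r^{-\alpha-1}\,dr\otimes P_{\Theta,\infty,\infty}(d\theta)\quad\text{on }(0,\infty)\times \sphere_\infty.
\]
For a Borel set $B\subset\sphere_2$ and $A=\{g\in\Lspace:\|g\|\geq 1,\;g/\|g\|\in B\}$, I would compute $i_*\mu(A)$ by substituting the polar form above and performing the change of variable $s=r\|\theta\|$ in the radial integral (legitimate because $\|\theta\|>0$ for every $\theta\in\sphere_\infty$, the quantity $\|\theta\|$ denoting the $L^2$ norm of $\theta$). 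The Jacobian contributes a factor $\|\theta\|^\alpha$, while $\int_1^\infty \alpha s^{-\alpha-1}\,ds=1$ collapses the radial part, leaving
\[
i_*\mu(A)=\EE\bigl[\|\Theta_{\infty,\infty}\|^\alpha\,\1\{\Theta_{\infty,\infty}/\|\Theta_{\infty,\infty}\|\in B\}\bigr].
\]
Dividing by $i_*\mu(\{\|g\|\geq 1\})=\EE[\|\Theta_{\infty,\infty}\|^\alpha]$, which is strictly positive because $\|\Theta_{\infty,\infty}\|_\infty=1$ a.s.\ forces $\|\Theta_{\infty,\infty}\|>0$ a.s., yields formula~\eqref{eq:RVC0=>RVL2}, and Proposition~\ref{prop:polarRVCond} then identifies the $L^2$-angular limit $\Theta_{\infty,2}$.

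The technical work is essentially bookkeeping: invoking the standard polar factorization of $-\alpha$-homogeneous measures on a separable metric cone with the correct normalization, and executing the change of variable. The only substantive subtlety is that $\theta\mapsto\|\theta\|$ is a nontrivial measurable function on $\sphere_\infty$ (not an isometry), so the $L^2$-angular distribution is a genuine reweighting of $P_{\Theta,\infty,\infty}$ by $\|\theta\|^\alpha$ rather than a mere projection — this reweighting is precisely what~\eqref{eq:RVC0=>RVL2} encodes.
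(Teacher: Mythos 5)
Your proof is correct, but it follows a genuinely different route from the paper. The paper disposes of the whole statement in one line by invoking Theorem~3 of \cite{dombry2015functional} on $r$-Pareto processes, applied with the homogeneous, $\|\cdot\|_\infty$-continuous cost functional $\ell(X)=\|X\|$: that theorem directly delivers regular variation of $\ell(X)$ and the reweighted spectral measure \eqref{eq:RVC0=>RVL2}, after which Proposition~\ref{prop:polarRVCond} identifies the $L^2$-angular limit. You instead give a self-contained argument at the level of the limit measures: the $\Mzer$-continuous mapping theorem applied to the inclusion $i:\mathcal{C}[0,1]\to\Lspace$ (whose hypotheses you verify correctly, the key points being $\|f\|\le\|f\|_\infty$ and that a nonzero continuous function has positive $L^2$ norm), followed by the polar factorization of the $-\alpha$-homogeneous limit measure and the change of variable $s=r\|\theta\|$, which produces the $\|\theta\|^\alpha$ reweighting. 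Both arguments are sound; yours is more elementary and makes the mechanism behind the reweighting explicit, at the cost of a little bookkeeping that the citation to \cite{dombry2015functional} absorbs (in particular, one should note as you implicitly do that $i_*\mu\neq 0$, which your final computation $i_*\mu(\{\|g\|\ge 1\})=\EE[\|\Theta_{\infty,\infty}\|^\alpha]>0$ settles, the expectation being finite since $\|\theta\|\le\|\theta\|_\infty=1$ on $\sphere_\infty$). The two approaches are in fact the same calculation performed at different levels of abstraction: the cited theorem's proof is essentially your change-of-variable argument packaged for general cost functionals.
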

\begin{proof}
  Since $\|\cdot\|$ is continuous w.r.t. $\|\cdot\|_\infty$ in
  $\mathcal{C}[0,1]$, Theorem~3 in \cite{dombry2015functional}
  applies (upon chosing $\ell(X) = \|X\|$ with the notations of the
  cited reference), which yields regular variation of $X$ in $\Lspace$,
  together with the expression given in~\eqref{eq:RVC0=>RVL2} for the
  angular measure associated with the $L^2$ norm
  $\|\point\|$. 
\end{proof}
One may wonder whether  the reciprocal is also true, \ie if $X \in \mathcal{C}[0,1]$, and  $X \in RV_{-\alpha}(\Lspace)$, is it necessarily the case  that $X \in RV_{-\alpha}(\mathcal{C}[0,1])$? 
A counter-example is given in the next proposition.

\begin{proposition}\label{prop:CE2} The reverse statement of Proposition~\ref{prop:RVC0=>RVL2} is not true in general. There exists a sample-continuous stochastic process over $[0,1]$ which is regularly varying in $\Lspace$ but not in $\mathcal{C}[0,1]$.
\end{proposition}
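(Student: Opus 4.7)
The plan is to exhibit a sample-continuous process $X$ built as the sum of a well-behaved $L^2$-dominant background and a deterministic narrow spike whose sup-norm height grows much faster than the background, while contributing negligibly to the $L^2$ norm. Fix $\alpha > 0$, let $R\sim \Pareto{\alpha}$, pick some $\phi \in \mathcal{C}[0,1]$ with $\|\phi\| = 1$ (e.g.\ $\phi\equiv 1$), and introduce the triangular spike of height $r^2$ and width $2 r^{-3}$:
\begin{equation*}
B_r(t) = r^2 \max(0,\, 1 - r^3 |t - 1/2|), \qquad r \geq 1.
\end{equation*}
Set $X(t) = R\,\phi(t) + B_R(t)$. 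Since each trajectory is a sum of two continuous functions, $X$ is sample-continuous. Direct computation yields $\|B_R\|_\infty = R^2$, $\|B_R\|^2 = \tfrac{2}{3}\, R$ and $|\langle \phi, B_R\rangle| = O(R^{-1})$, so the spike is dominant in sup norm but negligible in $L^2$ as $R\to\infty$.

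For regular variation in $\Lspace$, I would invoke the polar characterization, Proposition~\ref{prop:polarRVCond}. Expanding the squared norm gives $\|X\|^2 = R^2 + O(R)$, whence $\|X\| = R + O(1)$ and $\PP[\|X\| > u]\sim u^{-\alpha}$, which yields regular variation of the radial component with index $\alpha$. For the angular component, the bound
\begin{equation*}
\bigl\| X/\|X\| - \phi \bigr\| = \frac{\| (R - \|X\|)\phi + B_R \|}{\|X\|} = O(R^{-1/2}) \xrightarrow[R\to\infty]{\mathrm{a.s.}} 0
\end{equation*}
implies that $\Pangt$ converges weakly to the Dirac mass $\delta_\phi$ on the $L^2$ unit sphere as $t\to\infty$. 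This completes the verification that $X \in \RV_{-\alpha}(\Lspace)$.

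For the negative conclusion, the strategy is to show that the conditional law of the sup-norm angle fails tightness in $\mathcal{C}[0,1]$, violating the polar characterization. Since $\|X\|_\infty = R^2 + O(R)$, the conditional distribution of $X/\|X\|_\infty$ given $\|X\|_\infty > u$ concentrates, for large $u$, on the normalized spikes $B_R/R^2$ of unit height and width $2R^{-3}$. The modulus of continuity of such a spike at scale $\delta$ is $w_\delta(B_R/R^2) = \min(1,\, \delta R^3)$, which equals $1$ as soon as $\delta > R^{-3}$. Hence for any fixed $\delta > 0$,
\begin{equation*}
\PP[\, w_\delta(X/\|X\|_\infty) > 1/2 \mid \|X\|_\infty > u\,] \xrightarrow[u\to\infty]{} 1,
\end{equation*}
which contradicts the Arzelà--Ascoli tightness criterion on $\mathcal{C}[0,1]$. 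Consequently the family $(\mathcal{L}(X/\|X\|_\infty \mid \|X\|_\infty > u))_{u>0}$ admits no weak limit in $\mathcal{C}[0,1]$, so $X$ is not regularly varying in that space. The main technical subtlety will be the uniform control, under the radial conditioning, of the perturbation terms arising when combining the background $R\phi$ and the spike $B_R$ in both norms, but as shown above all cross-terms are of lower order and these verifications reduce to elementary Pareto calculations.
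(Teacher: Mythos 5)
Your construction is correct, but it follows a genuinely different route from the paper's. The paper also builds a ``spiked'' process, but with a \emph{multiplicative} structure $X=\rho Y$, where $Y$ is a random triangular pulse calibrated so that $\|Y\|_2=Z$ and $\|Y\|_\infty=e^Z$ for a Pareto variable $Z$, and $\rho$ is an independent Pareto factor with a heavier tail. Regular variation in $\Lspace$ then comes for free from the Breiman-type multiplicative lemma (Example~\ref{prop:example1} with $d=1$), and the failure in $\mathcal{C}[0,1]$ is dispatched by a moment criterion: $\|X\|_\infty=\rho e^Z$ has $\E[\|X\|_\infty^\delta]=+\infty$ for every $\delta>0$, so the sup-norm radius itself cannot be regularly varying. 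Your additive construction (background $R\phi$ plus a narrow spike $B_R$) instead verifies both halves through the polar characterization of Proposition~\ref{prop:polarRVCond}: direct norm expansions give regular variation of $\|X\|$ and convergence of $\Pangt$ to $\delta_\phi$ in $\Lspace$, while in $\mathcal{C}[0,1]$ you show that the \emph{angular} family fails (asymptotic) tightness via the modulus of continuity, even though $\|X\|_\infty=R^2+R$ is in fact regularly varying with index $\alpha/2$. This makes your counterexample somewhat more informative: it isolates the failure in the angular component alone, whereas the paper's example kills the radial component outright; the price is that you must carry out the polar verification by hand rather than invoking Breiman's lemma and the moment bound. Your estimates ($\|B_R\|^2=\tfrac23 R$, $\langle\phi,B_R\rangle=O(R^{-1})$, $\|X\|=R+O(1)$, $\|\Theta-\phi\|=O(R^{-1/2})$) are correct, and the tightness contradiction is rigorous since $\{f: w_\delta(f)\ge 1/2\}$ is closed and every element of $\mathcal{C}[0,1]$ is uniformly continuous, so a weak limit would have to charge the empty set $\bigcap_{k}\{w_{1/k}\ge 1/2\}$. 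The only cosmetic point is that for $R$ close to $1$ the spike's support overflows $[0,1]$ and should be truncated, which is immaterial for the tail asymptotics.
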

\begin{proof}
  We construct a `spiked' continuous process with controlled $L^2$
  norm, while the sup-norm is super-heavy
  tailed.  
  Let $Z$ follow a Pareto distribution with parameter $\alpha_Z>0$, and define a   sample-continuous stochastic process
  $$Y(t) = \Big( 1 - \frac{t}{3Z^2\exp(-2Z)}\Big)\exp(Z) \1\{[0,3Z^2\exp(-2Z)[\}.$$
  Straightforward computations yield $\|Y\|_\infty = \exp(Z)$ and
  $\|Y\|_2 = Z$.  Let $\rho$ be another independent Pareto-distributed
  variable with index $0<\alpha_\rho<\alpha_Z$.  Finally, define
  $X = \rho Y$. Then $X$ is a sample-continuous stochastic process
  over $[0,1]$. We have $\|X\|_\infty = \rho \exp(Z)$, which is
  clearly not regularly varying because  (see \emph{e.g.}~\cite{mikosch1999regular}, Proposition 1.3.2)
  $\E [\|X\|_\infty^\delta] = +\infty$ for all $\delta >0$. Thus, $X$
  is not regularly varying in $(\mathcal{C}[0,1],\|\cdot \|_\infty)$.

  On the other hand, the pair $(\rho,Y)$ satisfies the assumptions of
  Example~\ref{prop:example1} with $d=1$. Hence, $X = \rho Y$ is
  regularly varying in $\Hilb = \Lspace$.

\end{proof}

Propositions~\ref{prop:CE2} and~\ref{prop:RVC0=>RVL2} together show  that the framework of $L^2$- regular variation encompasses a wider classes of continuous processes than standard $\mathcal{C}[0,1]$ regular variation. This opens a road towards applications of EVT in situations where the relevant definition of an extreme event has to be understood in terms of `energy' of the (continuous) trajectory, as measured by the $L^2$ norm, rather than in terms of sup-norm.

\section{Principal Component Analysis  of Extreme Functions}\label{sec:KL}
This section gathers the main results of the paper.  Motivated by
dimension reduction purposes, our goal is to construct a finite-dimensional representation of extreme functions. In other words our
primary purpose is to learn a finite-dimensional subspace $V$ of
$\Hilb=\Lspace$ such that the orthogonal projections of extreme
functions onto $V$ are optimal in terms of angular reconstruction
error. Throughout this section we place ourselves in the setting of regular variation  introduced in Section~\ref{sec:HT} and consider a regularly varying random element $X$ in $\Hilb$ as in  Theorem~\ref{thm:thmRVH}, with the same notations.   Our focus is thus on building a low-dimensional representation
of the angular distribution of extremes $\Panginf$ introduced in
Section~\ref{subsec:reg_var}.  We consider the eigen decomposition of
the associated covariance operator
$$
\Cov{\infty} = \EE[\Theta_\infty\otimes \Theta_\infty] = \sum_{j \in \nset} \ev{j }{\infty} \varphi_j^\infty \otimes \varphi_j^\infty, 
$$
where $\Theta_\infty\sim \Panginf$, and the $\varphi_j^\infty$'s and $\ev{j}{\infty}$'s are  eigenfunctions and eigenvalues of $\Cov{\infty}$ following the notations of Section~\ref{subsec:KL}. If $\Panginf$ is sufficiently concentrated around a finite-dimensional subspace of moderate dimension $p$, a  reasonable approximation of $\Panginf$ is provided by its image measure \emph{via} the projection onto $\ES{p}{\infty} = \mathrm{Vect}(\varphi_j^\infty, j\le p)$. Independently from such sparsity assumptions,  the space $\ES{p}{\infty}$ minimizes the reconstruction error \eqref{eq:meanSqError} of the orthogonal projection relative to $\Theta_\infty$. It is also the unique minimizer as soon as  $\ev{p}{\infty}>\ev{p+1}{\infty}$, as discussed in the background section \ref{subsec:KL}.   

Our main results bring finite sample guarantees regarding an empirical
version of $\ES{p}{\infty}$ constructed using the $k\ll n$ largest
observations.  In this respect our work may be seen as an extension of
\cite{drees2021principal}, who consider finite dimensional
observations $X\in\rset^d$, to an infinite dimensional ambient
space. However our proof techniques are fundamentally different from
the cited reference. Indeed their analysis relies on Empirical Risk
Minimzation  arguments  
relative to the reconstruction risk at infinity,
$R_\infty(V) = \lim_{t\to\infty} \EE[ \Theta - \Pi_V \Theta \given R>t]$
where $\Pi_V$ denotes the orthogonal projection onto $V$. The main
ingredients of their analysis are  $(i)$ the fact that $\ES{p}{\infty} $ minimizes the risk at infinity 
$(ii)$ compactness of the unit sphere (or of any bounded, closed subset of
$\rset^d$). In
the present setting such compactness properties do not hold and we follow an entirely different path, as we investigate the convergence of an empirical version of $\Cov{\infty}$ in the Hilbert-Schmidt norm, and then rely on perturbation theory for covariance operators in order to control the deviations of its eigenspaces. We thus consider the pre-asymptotic covariance operator 
\begin{equation}
  \label{eq:preasymptot-covar}
\Cov{t} = \EE[\Theta  \otimes \Theta \given R>t ] = \EE[\Theta_t\otimes \Theta_t].   
\end{equation}

In the sequel, the discrepancy between finite dimensional linear subspaces of $\Hilb$ is measured in terms of the Hilbert-Schmidt norm of the difference between orthogonal projections, namely  we define a distance $\rho$ between finite dimensional subspaces $V,W$  of $\Hilb$,  by
\begin{equation*}
  \rho (V,W) = \| \Pi_V - \Pi_W \|_{HS(\Hilb)}.   
\end{equation*} 
It should be noticed that  \cite{drees2021principal} denote by
$\rho$ the operator norm of the difference between the projections,  which is coarser than the Hilbert-Schmidt one.

We show in  Section~\ref{sec:proba} that 
the first $p$  eigenfunctions of the pre-asymptotic operator $\Cov{t}$ generate a vector space $\ES{p}{t}$ converging to 
$\ES{p}{\infty}$ whenever $\ev{p}{\infty} > \ev{p+1}{\infty}$. Second, we establish in Section~\ref{sec:stats} the consistency of the  empirical subspace $\ESh{p}{t}$ (the one generated by the first $p$ eigenfunctions of  an empirical version of $\Cov{t}$) and we derive nonasymptotic guarantees for its deviations, based on concentration inequalities regarding  the empirical covariance operator.

\subsection{The Pre-asymptotic Covariance Operator and its Eigenspaces} \label{sec:proba}

Since perturbation theory  allows to control the deviations of eigenvectors and eigenvalues of a perturbed covariance operator, a natural first step in our analysis is to  ensure that the pre-asymptotic operator $C_t$ introduced in~\eqref{eq:preasymptot-covar} may indeed be seen as a perturbed version of the asymptotic operator $\Cov{\infty}$, as shown next.

\begin{theorem}[Convergence of the pre-asymptotic covariance operator] \label{thm:propproba}
In the setting of Theorem~\ref{thm:thmRVH}, as $t\to\infty$, the following convergence in the Hilbert-Schmidt norm holds true, 
\begin{equation*}
\|\Cov{t} - \Cov{\infty} \|_{HS(\Hilb)} \rightarrow 0\, .
\end{equation*}
\end{theorem}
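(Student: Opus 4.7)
The approach will rely on the fact that $\Theta_t$ and $\Theta_\infty$ both take values in the unit sphere of $\Hilb$, so that $\|\Theta_t \otimes \Theta_t\|_{\HSH} = \|\Theta_t\|^2 = 1$ almost surely. This trivial uniform bound, combined with the weak convergence $\Pangt \wto \Panginf$ in $\Hilb$ supplied by the regular variation hypothesis (Theorem~\ref{thm:thmRVH}), should be enough to pass to the limit inside the Bochner integrals defining the covariance operators $\Cov{t}$ and $\Cov{\infty}$.

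Concretely, I would fix an arbitrary sequence $t_n \to \infty$ and invoke Skorokhod's representation theorem (applicable since $\Hilb$ is separable) to construct, on some auxiliary probability space, random elements $\tilde\Theta_n \sim P_{\Theta,t_n}$ and $\tilde\Theta_\infty \sim \Panginf$ such that $\tilde\Theta_n \to \tilde\Theta_\infty$ almost surely in the norm of $\Hilb$. The tensor-square map $\Phi: x \mapsto x\otimes x$ is continuous from $\Hilb$ into $\HSH$, since the identity $x\otimes x - y\otimes y = (x-y)\otimes x + y\otimes (x-y)$ combined with $\|u\otimes v\|_{\HSH}=\|u\|\|v\|$ yields
\begin{equation*}
\|x\otimes x - y\otimes y\|_{\HSH}\le (\|x\|+\|y\|)\,\|x-y\|.
\end{equation*}
Continuous mapping then delivers $\tilde\Theta_n\otimes \tilde\Theta_n \to \tilde\Theta_\infty \otimes \tilde\Theta_\infty$ almost surely in $\HSH$.

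Because $\|\tilde\Theta_n\otimes \tilde\Theta_n\|_{\HSH}=1$ almost surely, the constant function $1$ is an integrable dominating envelope, and the Bochner-valued dominated convergence theorem gives
\begin{equation*}
\EE\bigl\|\tilde\Theta_n\otimes \tilde\Theta_n - \tilde\Theta_\infty \otimes \tilde\Theta_\infty\bigr\|_{\HSH}\to 0,
\end{equation*}
which implies in particular norm convergence of the Bochner integrals in $\HSH$. Since these integrals depend only on the marginal laws, this reads $\|\Cov{t_n}-\Cov{\infty}\|_{\HSH}\to 0$, and as the sequence was arbitrary the conclusion follows.

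I do not anticipate any serious obstacle here: the identity $\|\Theta\|=1$ automatically provides the uniform integrability that, for operators built out of less constrained random elements, normally has to be produced through delicate moment arguments. For a reader reluctant to invoke Skorokhod, an equally short alternative would be to expand $\|\Cov{t}-\Cov{\infty}\|_{\HSH}^2$ into three scalar terms of the form $\EE[\langle \Theta_t,\Theta_t'\rangle^2]$ (with $\Theta_t'$ an independent copy, and the mixed variant involving $\Theta_\infty$), and to pass to the limit in each using joint weak convergence of product measures together with the bounded continuity of $(x,y)\mapsto\langle x,y\rangle^2$ on the product unit sphere.
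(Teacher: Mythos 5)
Your proof is correct and follows essentially the same route as the paper: Skorokhod representation along an arbitrary sequence $t_n\to\infty$, continuity of the tensor-square map into $\HSH$, the bound $\|\E[Z]\|\le\E\|Z\|$ for Bochner integrals, and dominated convergence using the uniform bound coming from $\|\Theta\|=1$. The only cosmetic difference is that the paper first deduces weak convergence of $\Theta_t\otimes\Theta_t$ in $\HSH$ and applies Skorokhod there, whereas you apply Skorokhod to $\Theta_{t_n}$ in $\Hilb$ and then push forward by the (continuous) tensor-square map; the ingredients are identical.
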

\begin{proof}
  Recall from Proposition~\ref{prop:polarRVCond} that \rv of $X$ implies weak convergence of  the net $\Theta_t$ towards $\Theta_\infty$. Using the fact that  the mapping $h \in \Hilb \mapsto h \otimes h \in HS(\Hilb)$ is continuous, also 
  $\Theta_t \otimes \Theta_t$ converges weakly towards $\Theta_\infty \otimes \Theta_\infty$.
Let $(t_n)_{n\in\nset}$ be a nondecreasing sequence of reals converging to infinity.

Since the separability of $(\Hilb,\langle \cdot, \cdot \rangle)$ implies the separability of $(HS(\Hilb),\langle \cdot, \cdot \rangle_{HS(\Hilb)})$ (see \cite{blanchard2007statistical}, Section~2.1), we may apply  the Skorokhod's Representation theorem to the weakly converging sequence $\Theta_{t_n} \otimes \Theta_{t_n}$. Thus there is a  
probability space $(\Omega', \mathcal{F}, \PP')$, and random elements $Y_n$, $n\ge 1$ and  $Y_\infty$ in $HS(\Hilb)$ 
defined on the probability space $(\Omega', \mathcal{F},
\PP')$,  such that $\Theta_{t_n} \otimes \Theta_{t_n} \overset{d}{=} Y_n$, $\Theta_\infty \otimes \Theta_\infty \overset{d}{=} Y_\infty$ and $Y_n$ converges to $Y_\infty$ almost surely with respect to $\PP'$.  

A Jensen's type inequality in Hilbert spaces (see \cite{ledoux1991probability}, pp. 42-43) yields $\|\Cov{t_n} - \Cov{\infty} \|_{HS(\Hilb)} \m \E [\|Y_{n} - Y_\infty \|_{HS(\Hilb)}]$.  The dominated convergence theorem applied  to the vanishing sequence of random variables $\|Y_n-Y_\infty\|_{HS(\Hilb)}$ (which are bounded by the constant $2$) completes the proof.

\end{proof}
\begin{remark}
  An alternative way to obtain  the weak convergence of $\Theta_t\otimes\Theta_t$, which is key in the proof of Theorem~\ref{thm:propproba}, is to leverage Proposition~3.2 in~\cite{Xiong3}, which ensures that the operator     $X \otimes X$ is regularly varying in $HS(\Hilb)$. Since  $\Theta\otimes \Theta$ is indeed the angular component of $X\otimes X$, the result follows by an application of Proposition~\ref{prop:polarRVCond}.
\end{remark}

The next result concerns the  convergence of eigenspaces and is obtained by combining tools  from operator perturbation theory  with the result from Theorem~\ref{thm:propproba}.
In order to avoid additional technicalities  we consider in the next statement an  integer $p$  such that 
$\ev{p}{\infty} > \ev{p+1}{\infty} \ge 0$, that is, a positive  the spectral gap. 
Notice that such a $p$ necessarily exists since 
$\|\Cov{\infty}\|_{HS(\Hilb)}^2 = \sum_{j=1}^\infty (\ev{j}{\infty})^2 < \infty$.

\begin{corollary}[Convergence of pre-asymptotic eigen spaces] \label{cor:CCLproba}
Let  $p\in\nset^*$ be  such that $\ev{p}{\infty}>\ev{p+1}{\infty}$. Then,  as  $t$ tends to infinity, 
\begin{equation*}
\rho(\ES{p}{t}, \ES{p}{\infty}) \rightarrow 0. 
\end{equation*}

\end{corollary}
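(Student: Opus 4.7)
The plan is to combine the Hilbert--Schmidt convergence $\|\Cov{t} - \Cov{\infty}\|_{\HSH} \to 0$ furnished by Theorem~\ref{thm:propproba} with a perturbation-theoretic bound relating operator distance to distance between top-$p$ spectral projectors. The key ingredient is a Davis--Kahan $\sin\Theta$ inequality for self-adjoint Hilbert--Schmidt operators: provided the spectral gap $\delta := \ev{p}{\infty} - \ev{p+1}{\infty} > 0$ is preserved after perturbation, there exists an absolute constant $c > 0$ such that
\begin{equation*}
  \|\Pi_{\ES{p}{t}} - \Pi_{\ES{p}{\infty}}\|_{\HSH} \le \frac{c\,\|\Cov{t} - \Cov{\infty}\|_{\HSH}}{\delta}.
\end{equation*}
This inequality is standard in the perturbation theory of compact self-adjoint operators (see for instance the bounds collected by Zwald and Blanchard in the context of kernel PCA, or the resolvent calculus developed in Kato's monograph).

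The execution then proceeds in two short steps. First, Theorem~\ref{thm:propproba} gives $\|\Cov{t}-\Cov{\infty}\|_{\HSH}\to 0$, and hence also convergence in operator norm. By Weyl's inequality for compact self-adjoint operators, $|\ev{j}{t} - \ev{j}{\infty}| \le \|\Cov{t}-\Cov{\infty}\|_{\mathrm{op}}$ for every $j \ge 1$. Specialising to $j = p$ and $j = p+1$ and invoking the hypothesis $\ev{p}{\infty} > \ev{p+1}{\infty}$, one obtains $\ev{p}{t} - \ev{p+1}{t} \ge \delta/2 > 0$ for $t$ large enough, so the spectral gap at $p$ persists under the perturbation $\Cov{t} \to \Cov{\infty}$. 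Second, the Davis--Kahan bound above then applies and yields
\begin{equation*}
  \rho(\ES{p}{t}, \ES{p}{\infty}) = \|\Pi_{\ES{p}{t}} - \Pi_{\ES{p}{\infty}}\|_{\HSH} \le \frac{c\,\|\Cov{t} - \Cov{\infty}\|_{\HSH}}{\delta} \xrightarrow[t\to\infty]{} 0,
\end{equation*}
which is the desired conclusion.

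The only mildly delicate point will be citing or reproducing the Davis--Kahan bound in the correct form for compact self-adjoint operators on an infinite-dimensional separable Hilbert space, rather than in the more familiar matrix setting. If one prefers to sidestep the quantitative bound altogether and only establish the qualitative convergence $\rho(\ES{p}{t}, \ES{p}{\infty}) \to 0$, a soft argument via the holomorphic functional calculus suffices: express each spectral projector as a resolvent contour integral
\begin{equation*}
  \Pi_{\ES{p}{t}} = -\frac{1}{2\pi i} \oint_\Gamma (zI - \Cov{t})^{-1}\,dz
\end{equation*}
along a fixed contour $\Gamma \subset \mathbb{C}$ enclosing $\{\ev{j}{\infty}, j \le p\}$ but excluding the remainder of the spectrum of $\Cov{\infty}$, and pass to the limit by uniform convergence of the resolvents on $\Gamma$ (valid once the gap is preserved), followed by an application of dominated convergence to control the Hilbert--Schmidt norm of the difference.
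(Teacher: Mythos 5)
Your argument is correct and follows essentially the same route as the paper: Hilbert--Schmidt convergence of $\Cov{t}$ to $\Cov{\infty}$ from Theorem~\ref{thm:propproba}, combined with a Davis--Kahan/Zwald--Blanchard perturbation bound for the top-$p$ spectral projectors under the eigengap condition $\ev{p}{\infty}>\ev{p+1}{\infty}$ (the paper invokes Theorem~3 of Zwald and Blanchard directly, which gives the explicit constant your bound leaves as $c$). Your additional verification of gap persistence via Weyl's inequality and the alternative resolvent-contour argument are fine but not needed beyond what the cited perturbation result already packages.
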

\begin{proof}
According to Theorem~3 in \cite{zwald2005convergence},  for $A$ and $B$ two Hilbert-Schmidt operators on a separable Hilbert space, and an integer $p$ such that the ordered eigenvalues of  $A$ satisfy  $\ev{p}{}(A)>\ev{p+1}{}(A)$,  if   $\|B\|_{HS(\Hilb)} < \eg{p}{} := \frac{\ev{p}{}(A)-\ev{p+1}{}(A)}{2}$ is such that $A+B$ is still a positive operator, then  following inequality holds
\begin{equation*}
\rho(V^p,W^p) \m \frac{\|B\|_{HS(\Hilb)}}{\eg{p}{}},
\end{equation*}
where $V^p$ and $W^p$ are respectively the eigen spaces spanned by the first $p$ eigenvectors of $A$ and $A+B$. From  Theorem~\ref{thm:propproba}, the operators  $A =  \Cov{\infty}$ and $B = \Cov{\infty} - \Cov{t}$ satisfy the required assumptions stated above for $t$ sufficiently large,  and $\|B\|_{HS(\Hilb)}$ may be chosen arbitrarily small, which  concludes the proof.
\end{proof}

\begin{remark}[Convergence of eigenvalues and choice of $p$]\label{rem:choice_p_cvEigenVals}
  Even though the eigenvalues of  $\Cov{\infty}$ are not the main focus of our work, they are involved  in the conditions  of Corollary~\ref{cor:CCLproba} through the requirement of a  positive spectral gap. Of course these eigen values are unknown, however Weyl's inequality (see \cite{hsing2015theoretical}, Theorem~4.2.8) ensures that $\sup_{j\ge 1} |\ev{j}{t}  - \ev{j}{\infty}| \le \| \Cov{t} - \Cov{\infty} \|_{HS(\Hilb)}$. Identification of an integer $p$ for which the eigen gap is positive may thus be achieved using consistent estimates of the $\ev{j}{t}$'s  for $t$ large enough.

\end{remark}

\subsection{Empirical Estimation: Consistency and Concentration Results}\label{sec:stats}
We now turn to statistical properties of empirical estimates of $C_t$ and its eigen decomposition based on an independent sample   $X_1,...,X_n$ distributed as $X$. 
Following standard practice in Peaks-Over-Threshold analysis, we consider a fixed number of excesses $k$ above a random radial threshold chosen as the empirical $1-k/n$ quantile of the norm, with $k\ll n$. Even though our main results are of non asymptotic nature, letting $k,n\to \infty$ with $k/n\to 0$ yields consistency guarantees such as Corollary~\ref{cor:CCLstats} below.  Denote by $X_{(1)}, \ldots X_{(n)}$ the permutation of the sample such that $\|X_{(1)}\| \s \|X_{(2)}\|\s ... \s \| X_{(n)}\|$, and accordingly, let  $\Theta_{(i)}, R_{(i)}$ denote the angular and radial components of $X_{(i)}$. Then  $\|X_{(k)}\|  = R_{(k)}$ is an empirical version of the $(1-k/n)$ quantile of the norm $R$, which we shall sometimes denote by $\tnkh$.

With these notations an empirical version of $\Cov{t_{n,k}}$ is

\begin{equation}
\Covh{k} := \frac{1}{k }\sum_{i=1}^k \Theta_{(i)} \otimes \Theta_{(i)}. 
\end{equation}

\begin{remark}{\sc (Choice of $k$)} Choosing the number $k$  of observations  considered as extreme,   is an important but difficult topic in EVT. A wide variety of methods have been proposed in univariate problems (\cite{caeiro2016threshold,scarrott2012review}),  some rule of thumbs exist in multivariate settings based on visual inspection of angular histograms (\cite{coles1994statistical} or stability under rescaling of the radial distribution (\cite{stuaricua1999multivariate}) with little theoretical foundations. 
We leave this question outside our scope, although visual diagnostics are proposed in our numerical study based on Hill plots and convergence checking based on the finite-dimensional characterizations of \rv stated in Theorem~\ref{thm:thmRVH}.
\end{remark}

Our analysis of the statistical error $\|\Covh{k} - \Cov{\tnk}\|_{HS(\Hilb)}$  involves the intermediate pseudo empirical covariance
\begin{equation*}
   \Covb{t}:= \frac{1}{\PP[\|X_1\|\s t]} \frac{1}{n}\sum_{i=1}^n \Theta_i\otimes \Theta_i \1 \{R_i \ge t \}.
\end{equation*}
evaluated at $t=\tnk$. Since $\tnk$ is unknown, $\Covb{\tnk} = k^{-1} \sum_{i=1}^n \Theta_i\otimes \Theta_i \1 \{R_i \ge \tnk \}$ is not observable, although its deviation from $\Covh{k}$ may be controlled by the classical Bernstein inequality (Proposition~\ref{prop:prop4.2}). 
Our point of departure is the following decomposition of the statistical error, 
\begin{equation}\label{eq:errorDecompos}
\| \Covh{k}-\Cov{\tnk}\|_{HS(\Hilb)} \le \| \Covb{\tnk}-\Cov{\tnk}\|_{HS(\Hilb)} + \| \Covh{k}-\Covb{\tnk}\|_{HS(\Hilb)}.
\end{equation}
We analyze separately the two terms in the right-hand side of \eqref{eq:errorDecompos} in the next two propositions.

\begin{proposition} \label{prop:prop4.1}
Let $\delta \in (0,1)$. With probability larger than $1- \delta / 2$, we have
\begin{align*}
  \|\Covb{\tnk} - \Cov{\tnk}\|_{HS(\Hilb)}
  &\le \frac{ 1 + 4\sqrt{\log(2/\delta)}}{\sqrt{k}} + \frac{8 \log(2/\delta)}{3 k}
\end{align*}
\end{proposition}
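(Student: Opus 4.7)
The plan is to view $\Covb{\tnk} - \Cov{\tnk}$ as a centered empirical average of i.i.d. Hilbert--Schmidt operators and combine a crude second-moment bound with a Bernstein-type concentration inequality in $\HSH$. First, I would set $A_i = \Theta_i\otimes \Theta_i \,\1\{R_i \ge \tnk\}$, which are i.i.d. random elements of $\HSH$. By the definition of $\tnk$ as the $(1-k/n)$-quantile of $R$, we have $\PP[R\ge\tnk]=k/n$ and hence $\EE A_i = (k/n)\Cov{\tnk}$, so that
\begin{equation*}
\Covb{\tnk} - \Cov{\tnk} = \frac{1}{k}\sum_{i=1}^n \tilde A_i, \qquad \tilde A_i = A_i - \EE A_i,
\end{equation*}
is a centered sum of independent summands in $\HSH$, to which standard concentration machinery applies.

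Next I would extract the two ingredients required for any Bernstein-type inequality. Because $\|\Theta_i\|=1$, one has $\|A_i\|_{\HSH} = \|\Theta_i\|^2\,\1\{R_i\ge \tnk\} \le 1$ almost surely, hence $\|\tilde A_i\|_{\HSH} \le 2$ almost surely, giving the almost-sure envelope. For the variance proxy, $\EE\|A_i\|_{\HSH}^2 = \PP[R\ge \tnk] = k/n$, so $\EE\|\tilde A_i\|_{\HSH}^2 \le k/n$ and the total variance of the sum is at most $k$. Exploiting orthogonality of centered independent summands in the Hilbert space $\HSH$, a direct computation yields $\EE\|\Covb{\tnk}-\Cov{\tnk}\|_{\HSH}^2 \le 1/k$, and Jensen's inequality then gives $\EE\|\Covb{\tnk}-\Cov{\tnk}\|_{\HSH} \le 1/\sqrt{k}$. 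This takes care of the leading $1/\sqrt{k}$ term in the stated bound.

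The sub-Gaussian and Bernstein-tail contributions come from applying a Pinelis/Yurinski-type Bernstein inequality for norms of sums of i.i.d. Hilbert-valued variables to $\sum_{i=1}^n \tilde A_i$ in $\HSH$, with total variance $\le k$ and almost-sure bound $2$. The classical form produces, with probability at least $1-\delta/2$, a deviation around the mean of order $\sqrt{2k\log(2/\delta)}$ plus a linear-in-$\log(2/\delta)$ remainder; dividing by $k$ yields the $\sqrt{\log(2/\delta)/k}$ and $\log(2/\delta)/k$ terms. Adding the expectation bound $1/\sqrt{k}$ and bounding the sub-Gaussian constant by $4$ (a slack that absorbs the constant $\sqrt{2}$) and the Bernstein constant by $8/3$ delivers the inequality in the statement.

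The main obstacle is the bookkeeping of constants: matching exactly the $1 + 4\sqrt{\log(2/\delta)}$ numerator and the $8/3$ Bernstein factor requires a careful choice of the concentration form (deviation of the norm from its mean versus from zero) and of the almost-sure and variance constants $2$ and $k$. The remaining subtleties, namely measurability of the sum in $\HSH$ and applicability of the Hilbert Bernstein inequality, are handled by the separability of $\HSH$, which is inherited from that of $\Hilb$ and is already invoked elsewhere in the paper, so no additional technical work is needed on that front.
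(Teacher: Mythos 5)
Your proposal is correct and shares the paper's overall architecture, but it swaps out the key concentration lemma. Both arguments split the analysis into (i) a bound on the expectation $\EE\|\Covb{\tnk}-\Cov{\tnk}\|_{\HSH}\le 1/\sqrt{k}$ obtained exactly as you do, via the Pythagorean identity $\EE\|\sum_i \tilde A_i\|^2=\sum_i\EE\|\tilde A_i\|^2$ for independent centered elements of the separable Hilbert space $\HSH$ followed by Jensen, and (ii) a Bernstein-type control of the fluctuation of the norm. For step (ii) the paper does not use a vector-valued Pinelis/Yurinskii inequality; it applies McDiarmid's Bernstein-type inequality for real-valued functions of independent variables (Theorem~3.8 of \cite{mcdiarmid1998concentration}) to the scalar map $x_{1:n}\mapsto\|\frac1k\sum_i(\theta_{\tnk}(x_i)\otimes\theta_{\tnk}(x_i)-\EE[\theta_{\tnk}(X)\otimes\theta_{\tnk}(X)])\|_{\HSH}$, bounding the conditional ranges by $2/k$ and the sum of conditional variances by $2/k$, which yields $\PP(f-\EE f\ge\varepsilon)\le\exp(-k\varepsilon^2/(4(1+\varepsilon/3)))$ and, after solving the quadratic and using $\sqrt{a+b}\le\sqrt a+\sqrt b$, exactly the constants $4$ and $8/3$ in the statement. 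Your route through a Hilbert-space Bernstein inequality for $\sum_i\tilde A_i$ with envelope $2$ and total variance $k$ is equally legitimate and arguably more direct — with the deviation-from-zero form one can even dispense with the separate expectation bound, and the resulting constants (after absorbing the two-sided factor into $\log(4/\delta)\le 2\log(2/\delta)$) are dominated by those in the statement. The McDiarmid route has the advantage of requiring only a scalar concentration inequality plus elementary conditional-variance bookkeeping, at the price of a slightly worse variance proxy ($2/k$ rather than $1/k$ per the i.i.d.\ computation); your route requires citing a genuinely vector-valued inequality but exploits the structure of the sum more sharply. Your one loose end — pinning down which form of the Pinelis inequality gives which constants — is real but harmless here, since any of the standard forms lands within the stated bound.
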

\begin{proof}[sketch of proof]
A Bernstein-type concentration inequality from \cite{mcdiarmid1998concentration} which is applicable to arbitrary functions of $n$ variables with controlled conditional variance and conditional range (Theorem 3.8 of the reference,  recalled in Lemma~\ref{lem:Bernstein} from the  Appendix) ensures that $$\PP \big(\| \Covb{\tnk} - \Cov{\tnk}\|_{HS(\Hilb)} - \mathbb{E}[ \; \|\Covb{\tnk} - \Cov{\tnk}\|_{HS(\Hilb)}\;]   \ge \varepsilon \big) \le \exp\Big(\frac{ -k\varepsilon^2}{4(1+ \varepsilon/3)} \Big).$$ In order to control the expected deviation 
$\EE[\| \Covb{\tnk}-\Cov{\tnk}\|_{HS(\Hilb)}]$ in the left-hand side,   we use the fact that, if $A_1,...,A_n$ are independent centered $\Hilb$-valued random elements,  $\E[\big\|\sum_{i=1}^n A_i \big\|^2]=\sum_{i=1}^n \EE[\| A_i \|^2]$ (Lemma~\ref{lem:pythagoreInExpectation} in the Appendix). We apply this result to $A_i$ chosen as the deviation of the operator  $ 
\Theta_i\otimes \Theta_i \1\{R_i\ge \tnk\}$ from its expectation, which yields 
$$ \E [\| \Covb{\tnk}-\Cov{\tnk}\|_{HS(\Hilb)}]  \le 1 \slash \sqrt{k}$$ (Lemma~\ref{lem:BoundExpectedDeviation}) and finishes the proof, as detailed in Appendix~\ref{sec:proofsPCA}. 
\end{proof}
We now turn to the second term $\| \Covh{h}-\Covb{\tnk}\|_{HS(\Hilb)}$ in the error decomposition~\eqref{eq:errorDecompos}. 
\begin{proposition}\label{prop:prop4.2}
Let $\delta \in (0,1)$. With probability larger than $1-\delta/2$, we have
\begin{align*}
  \|\Covh{k} - \Covb{\tnk} \|_{HS(\Hilb)} & \le \sqrt{\frac{8\log(4/\delta)}{k}}  + \frac{ 4\log(4/\delta)}{3k}\;.\\
\end{align*}
\end{proposition}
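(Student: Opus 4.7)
The plan is to reduce the problem to a binomial deviation inequality by exploiting the very specific structure of the difference $\Covh{k} - \Covb{\tnk}$. The key observation is that both operators are normalized averages of rank-one operators $\Theta_i \otimes \Theta_i$, where each summand has Hilbert--Schmidt norm exactly one (since $\|\Theta_i\|=1$). What differs between the two operators is only the \emph{set of indices} over which the average is taken: $\Covh{k}$ sums over the $k$ indices with largest norm, whereas $\Covb{\tnk}$ sums over the random set $\mathcal{I}_n = \{i\le n : R_i \ge \tnk\}$ of observations exceeding the (deterministic) population quantile $\tnk$. Writing $N_k = |\mathcal{I}_n| = \sum_{i=1}^n \1\{R_i\ge \tnk\}$, I would note that $N_k\sim \mathrm{Bin}(n,k/n)$, with mean $k$.

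The second step is a deterministic nesting argument. Almost surely (ignoring the measure-zero event of ties in the $R_i$'s), one of the two index sets is contained in the other: if the empirical quantile $R_{(k)}$ exceeds $\tnk$, then the top-$k$ indices all satisfy $R_i\ge \tnk$, so the top-$k$ set is contained in $\mathcal{I}_n$; otherwise, $\mathcal{I}_n$ is contained in the top-$k$ set. In either case, the symmetric difference has cardinality exactly $|N_k - k|$. Using the triangle inequality together with $\|\Theta_i\otimes \Theta_i\|_{HS(\Hilb)}=1$, this yields the deterministic bound
\begin{equation*}
\|\Covh{k} - \Covb{\tnk}\|_{HS(\Hilb)} \;\le\; \frac{|N_k - k|}{k}.
\end{equation*}

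The third and final step is a standard Bernstein inequality applied to the centered indicators $\1\{R_i\ge \tnk\} - k/n$, which are bounded by $1$ in absolute value and have variance at most $k/n$, so that $\mathrm{Var}(N_k)\le k$. This produces, with probability at least $1-\delta/2$, a deviation bound of the form $|N_k - k| \lesssim \sqrt{k\log(4/\delta)} + \log(4/\delta)$, which after dividing by $k$ gives the announced rate $\sqrt{8\log(4/\delta)/k} + 4\log(4/\delta)/(3k)$ with the specific constants advertised in the statement.

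I do not expect any serious obstacle: the argument is essentially a combinatorial reduction (the nesting of the two index sets) followed by a textbook concentration inequality for a binomial random variable. The one place to be slightly careful is the handling of the random normalization in $\Covb{\tnk}$: since $\PP[\|X_1\|\ge \tnk]=k/n$ exactly, the normalization in $\Covb{\tnk}$ is the deterministic constant $1/k$, which is what makes the nesting argument clean and avoids having to bound an additional random-denominator term.
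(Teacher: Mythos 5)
Your proposal is correct and follows essentially the same route as the paper: both reduce $\|\Covh{k}-\Covb{\tnk}\|_{HS(\Hilb)}$ to $k^{-1}\big|\sum_{i=1}^n \1\{R_i\ge \tnk\}-k\big|$ via the triangle inequality and the observation that the two index sets differ by exactly $|N_k-k|$ elements (the paper phrases this as counting the non-zero terms among the indicator differences, which is your nesting argument), and then apply the classical Bernstein inequality to the Binomial$(n,k/n)$ variable $N_k$. Your remark that the normalization of $\Covb{\tnk}$ is the deterministic constant $1/k$ because $\PP[\|X_1\|\ge\tnk]=k/n$ is exactly the simplification the paper relies on as well.
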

\begin{proof}
First, the triangle inequality yields
\begin{align*}
  \| \Covh{k} - \Covb{\tnk} \|_{HS(\Hilb)}
  &= \frac{1}{k}\; \Big\| \; \sum_{i=1}^n \Theta_i \otimes \Theta_i ( \1\{ R_i \ge \tnk \} -  \1\{ R_i \ge \tnkh \} ) \; \Big\|_{HS(\Hilb)}  \\
  &\le \frac{1}{k} \;\sum_{i=1}^n |\; \1\{ R_i \ge \tnk \} -  \1\{ R_i \ge R_{(k)} \}\;| . \\
\end{align*}
The number of non-zero terms inside the sum in the above display is the number of indices $i$ such that ` $R_i < R_{(k)}$ and $R_i\ge \tnk$' , or the other way around, thus 
\begin{align*}
   \| \Covh{k} - \Covb{\tnk} \|_{HS(\Hilb)} &\le \frac{1}{k} \; \Big| \;\sum_{i=1}^n  \1\{ R_i \ge \tnk \} -  k \;\Big|. 
\end{align*}
Notice that $\sum_{i=1}^n  \1\{ R_i \ge \tnk \}$ follows a  Binomial distribution  with  parameters $(n, k/n)$.
The (classical) Bernstein's inequality as stated \emph{e.g.} in \cite{mcdiarmid1998concentration}, Theorem~2.7,  yields
\begin{equation*}
  \PP[\| \Covh{k} - \Covb{\tnk} \|_{HS(\Hilb)} \s \varepsilon ]
  \le \PP\Big(\; \Big| \sum_{i=1}^n  \1\{ R_i\ge \tnk \} -  k \Big| \s k\varepsilon \; \Big)
  \le 2 \exp \Big( \frac{-k\varepsilon^2}{2(1+\varepsilon/3)} \Big). 
\end{equation*}
Solving for $\varepsilon$ and using the fact that $\sqrt{a+b} \le \sqrt{a}+\sqrt{b}$ for any nonnegative numbers $a,b$, we obtain  the upper bound in the statement. 
\end{proof}
We are now ready to state a non-asymptotic guarantee regarding the deviations (in the HS-norm) of the  empirical covariance operator. 
\begin{theorem} \label{thm:stats}
Let $\delta \in (0,1)$. With probability larger than $1-\delta$, we have
\begin{align*}
  \|\Covh{k} - \Cov{\tnk} \|_{HS(\Hilb)}
  & \le \frac{ 1 + 4\sqrt{\log(2/\delta)} + \sqrt{8\log(4/\delta) }}{\sqrt{k}} +
    \frac{8 \log(2/\delta) + 4\log(4/\delta)}{3 k}
\end{align*}
\end{theorem}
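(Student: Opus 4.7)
The plan is to combine the two propositions just proved via the error decomposition~\eqref{eq:errorDecompos} and a union bound. The theorem statement exactly matches the sum of the right-hand sides of Proposition~\ref{prop:prop4.1} and Proposition~\ref{prop:prop4.2}, so essentially no new work is needed beyond a clean accounting.

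First I would denote by $E_1$ the event that the bound of Proposition~\ref{prop:prop4.1} fails and by $E_2$ the event that the bound of Proposition~\ref{prop:prop4.2} fails. By construction $\PP(E_1)\le\delta/2$ and $\PP(E_2)\le\delta/2$, and by the union bound $\PP(E_1\cup E_2)\le\delta$. Hence on the complementary event, which has probability at least $1-\delta$, both concentration bounds hold simultaneously.

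Second, on this event I would apply the triangle inequality~\eqref{eq:errorDecompos}, namely
\begin{equation*}
\| \Covh{k}-\Cov{\tnk}\|_{HS(\Hilb)} \le \| \Covb{\tnk}-\Cov{\tnk}\|_{HS(\Hilb)} + \| \Covh{k}-\Covb{\tnk}\|_{HS(\Hilb)},
\end{equation*}
and substitute the two deterministic upper bounds provided by Propositions~\ref{prop:prop4.1} and~\ref{prop:prop4.2}. Summing the $1/\sqrt{k}$ terms and the $1/k$ terms separately gives exactly the claimed bound.

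There is no real obstacle here: the calibration of the failure probabilities $\delta/2$ in each auxiliary proposition was chosen precisely so that the union bound yields $\delta$. The only minor point worth emphasising is that the two events to which concentration is applied are defined on the same underlying probability space (the sample $X_1,\dots,X_n$), so simultaneous validity of both bounds on the intersection of the two high-probability events is legitimate. Consistency as $k,n\to\infty$ with $k/n\to 0$ is an immediate consequence of the $O(1/\sqrt{k})$ leading term, and combined with Theorem~\ref{thm:propproba} and Corollary~\ref{cor:CCLproba} it yields consistency of the empirical eigenspaces $\ESh{p}{t_{n,k}}$ towards $\ES{p}{\infty}$.
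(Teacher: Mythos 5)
Your proposal is correct and follows exactly the paper's own argument: the triangle-inequality decomposition~\eqref{eq:errorDecompos} combined with a union bound over the failure events of Propositions~\ref{prop:prop4.1} and~\ref{prop:prop4.2}, each calibrated at level $\delta/2$. Nothing is missing.
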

\begin{proof}
  Observe that the following inclusion  between adverse events holds true because of \eqref{eq:errorDecompos}, 
  \begin{align*}
   \big\{ \| \Covh{k}-\Cov{\tnk} \|_{HS(\Hilb)} \s  \varepsilon_1+\varepsilon_2 \big\}\; 
    & \subset \big\{ \|\Covh{k} - \Covb{\tnk} \|_{HS(\Hilb)} \ge \varepsilon_1 ]
      \big \}\; \cup \; 
      \big\{ \| \Covb{\tnk}-\Cov{\tnk} \|_{HS(\Hilb)} \s \varepsilon_2  \big \}, 
  \end{align*}
  for all $\varepsilon>0$. 
  A union bound and Propositions~\ref{prop:prop4.1},~\ref{prop:prop4.2} conclude the proof. 
\end{proof}

\begin{remark}[Tightness of the upper bound, asymptotics]\label{rem:tightnessBound}
  The bound obtained in Theorem~\ref{thm:stats} constitutes a minimal guarantee regarding covariance estimation of the extremes. By no means do we claim optimality regarding the multiplicative constants, which we have not tried to optimize, as revealed by an inspection fo the proof where the decomposition of the adverse event into two events of same probability may be sub-optimal.  However the leading term of the error as $k\to\infty$ is an explicit, moderate  constant 
  and the rate of convergence is $1/\sqrt{k}$,  which matches known asymptotic rates in the literature of tail empirical processes in the univariate or multivariate case (see \emph{e.g.} \cite{einmahl1988strong} or \cite{aghbalou2021tail}, Theorem~3).  We leave to further research the question of the asymptotic behaviour of $\Covh{k} -\Cov{\tnk}$ as $k,n\to\infty$, $k/n\to 0$, a problem which could be attacked  by means of Lindeberg central limit theorems in Hilbert  spaces~(\cite{kundu2000central}).
\end{remark}
Combining  Theorems~\ref{thm:propproba} and~\ref{thm:stats}, the following consistency result is immediate. 
\begin{corollary}[Consistency]
The empirical covariance of extreme angles $\Covh{k}$ is consistent, \ie as   $n,k\to\infty$ with  $k/n\to 0$,  $$\|\Covh{k} -  \Cov{\infty}\|_{HS(\Hilb)} \to 0  \text{  in probability.  } $$
\end{corollary}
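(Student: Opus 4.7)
The plan is to derive the corollary as a direct consequence of the two main results of Section~\ref{sec:KL} combined through a triangle inequality, namely the deterministic convergence of the pre-asymptotic operator (Theorem~\ref{thm:propproba}) and the non-asymptotic probabilistic bound on the empirical estimation error (Theorem~\ref{thm:stats}).

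First, I would write the elementary decomposition
\[
\|\Covh{k} - \Cov{\infty}\|_{HS(\Hilb)} \;\le\; \|\Covh{k} - \Cov{\tnk}\|_{HS(\Hilb)} + \|\Cov{\tnk} - \Cov{\infty}\|_{HS(\Hilb)}.
\]
The second summand is purely deterministic. Since $R$ is regularly varying with positive index $\alpha$, its survival function is ultimately positive, so the theoretical $(1-k/n)$-quantile $\tnk$ of $R$ diverges to $+\infty$ as $n\to\infty$ with $k/n\to 0$. Applying Theorem~\ref{thm:propproba} along the sequence $\tnk$ then yields $\|\Cov{\tnk} - \Cov{\infty}\|_{HS(\Hilb)} \to 0$.

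Next, I would handle the random summand using Theorem~\ref{thm:stats}: for any fixed $\delta\in(0,1)$, with probability at least $1-\delta$,
\[
\|\Covh{k} - \Cov{\tnk}\|_{HS(\Hilb)} \;\le\; \frac{1 + 4\sqrt{\log(2/\delta)} + \sqrt{8\log(4/\delta)}}{\sqrt{k}} + \frac{8\log(2/\delta) + 4\log(4/\delta)}{3k},
\]
and the right-hand side vanishes as $k\to\infty$ with $\delta$ held fixed.

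To conclude convergence in probability, fix $\varepsilon>0$ and $\delta\in(0,1)$. By the two previous steps, one can choose $n_0=n_0(\varepsilon,\delta)$ large enough so that for all $n\ge n_0$ the deterministic term is at most $\varepsilon/2$, and the high-probability bound from Theorem~\ref{thm:stats} is at most $\varepsilon/2$ as well. Thus for all $n\ge n_0$,
\[
\PP\bigl(\|\Covh{k} - \Cov{\infty}\|_{HS(\Hilb)} > \varepsilon\bigr) \;\le\; \delta,
\]
which gives $\limsup_{n\to\infty}\PP(\|\Covh{k} - \Cov{\infty}\|_{HS(\Hilb)} > \varepsilon)\le \delta$, and letting $\delta \to 0$ closes the argument. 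There is no real obstacle here: the only mildly non-trivial point is to note that regular variation of the norm guarantees $\tnk \to \infty$ along the prescribed sequence, so that Theorem~\ref{thm:propproba} can indeed be applied at $t=\tnk$.
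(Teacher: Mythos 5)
Your proof is correct and follows essentially the same route as the paper, which presents the corollary as ``immediate'' from combining Theorems~\ref{thm:propproba} and~\ref{thm:stats} --- precisely your triangle-inequality decomposition with the deterministic term controlled by the former and the stochastic term by the latter. Your explicit observation that regular variation of $\|X\|$ guarantees $\tnk\to\infty$ (so that Theorem~\ref{thm:propproba} applies along $t=\tnk$) is a detail the paper leaves implicit, and is a worthwhile addition.
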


Theorem~\ref{thm:stats} also provides a control of the deviations of the empirical eigenspaces, 
with a proof paralleling the one of Corollary~\ref{cor:CCLproba}. In the following statement we denote by $\ESh{p}{k}$ such an eigenspace, that is, the linear space generated by the first $p$ eigen functions of $\Covh{k}$. 
\begin{corollary}[Deviations of empirical eigenspaces]\label{cor:CCLstats} Let
  $p\in\nset^*$ satisfying the same positive eigen gap assumption as in Corollary~\ref{cor:CCLproba}, that is $\eg{p}{\infty} := (\ev{p}{\infty} - \ev{p+1}{\infty} )/2 >0$. Denote the pre-asymptotic eigen gap by
  $$
\eg{p}{t} =\frac{\ev{p}{t}-\ev{p+1}{t}}{2}. 
$$
Let  $n,k$ be large enough so that $\eg{p}{\tnk}>0$  (see Remark~\ref{rem:choice_p_cvEigenVals} for the fact that $\eg{p}{\tnk} \to \eg{p}{\infty} >0$).  
 For  $\delta \in (0,1)$,  with probability larger than $1-\delta$, we have
\begin{align*}
\rho(\ESh{p}{k},\ES{p}{\tnk})&\le \frac{B(n,k,\delta)}{\eg{p}{\tnk}},  
\end{align*}
where $B(n,k,\delta)$ is the upper bound on the deviations of $\Covh{k}$ stated in Theorem~\ref{thm:stats}. 
In particular, we have the following consistency result as $n,k\to\infty$ while $k/n\to 0$, 
\begin{equation*}
\rho(\ESh{p}{k},\ES{p}{\infty})\to \; 0 \text{ in probability}.  
\end{equation*}

\end{corollary}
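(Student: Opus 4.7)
The plan is to deduce this corollary from Theorem~\ref{thm:stats} by invoking the same operator-perturbation result of \cite{zwald2005convergence} that underlies Corollary~\ref{cor:CCLproba}, but this time with the roles of the unperturbed and perturbing operators played by the pre-asymptotic and the empirical covariance, respectively. More precisely, I would set $A = \Cov{\tnk}$ and $B = \Covh{k} - \Cov{\tnk}$, observe that $A+B = \Covh{k}$ is nonnegative by construction (it is an average of rank-one projections $\Theta_{(i)}\otimes\Theta_{(i)}$), and note that the $p$-th eigen gap of $A$ is exactly $\eg{p}{\tnk}$ which is assumed positive for the regime of $n,k$ considered.

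Next I would combine Theorem~\ref{thm:stats} with the Zwald--Blanchard inequality. Theorem~\ref{thm:stats} guarantees that on an event of probability at least $1-\delta$ one has $\|B\|_{HS(\Hilb)}\leq B(n,k,\delta)$. I would restrict to values of $n,k$ large enough that also $B(n,k,\delta) < \eg{p}{\tnk}$, which is legitimate since, as noted in Remark~\ref{rem:choice_p_cvEigenVals}, $\eg{p}{\tnk}\to \eg{p}{\infty}>0$ while $B(n,k,\delta)\to 0$. Under these conditions the perturbation theorem applies and directly yields
\begin{equation*}
\rho(\ESh{p}{k},\ES{p}{\tnk}) \;\leq\; \frac{\|B\|_{HS(\Hilb)}}{\eg{p}{\tnk}} \;\leq\; \frac{B(n,k,\delta)}{\eg{p}{\tnk}}
\end{equation*}
on the same event, which is the announced non-asymptotic bound.

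For the consistency statement I would proceed in two steps using the triangle inequality for $\rho$: write
\begin{equation*}
\rho(\ESh{p}{k},\ES{p}{\infty}) \;\leq\; \rho(\ESh{p}{k},\ES{p}{\tnk}) + \rho(\ES{p}{\tnk},\ES{p}{\infty}).
\end{equation*}
The second term tends to $0$ deterministically as $n,k\to\infty$ with $k/n\to 0$, since $\tnk\to\infty$ in probability and Corollary~\ref{cor:CCLproba} applies. The first term tends to $0$ in probability by the non-asymptotic bound just established, because $B(n,k,\delta) = O(1/\sqrt k)\to 0$ whereas $\eg{p}{\tnk}\to\eg{p}{\infty}>0$ (a consistent estimate via Weyl's inequality, cf. Remark~\ref{rem:choice_p_cvEigenVals}); a standard argument letting $\delta$ go to $0$ along a subsequence gives convergence in probability.

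The main obstacle is really a bookkeeping issue: the Zwald--Blanchard bound requires $\|B\|_{HS(\Hilb)}<\eg{p}{\tnk}$ to guarantee that the first $p$ eigenspaces of $A+B$ are well separated from the rest, so one must carefully argue that this sub-event of the $(1-\delta)$-probability event from Theorem~\ref{thm:stats} has probability close to one for the relevant range of $n,k$. No new probabilistic machinery is needed beyond Theorem~\ref{thm:stats}; the difficulty is purely in handling the implicit largeness conditions on $n,k$ and the randomness of $\tnk$ (which could be replaced by deterministic quantiles and then transferred back using the standard argument that empirical quantiles are close to theoretical ones with high probability).
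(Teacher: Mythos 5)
Your proposal matches the paper's intended argument: the paper gives no separate proof but states that the corollary follows "with a proof paralleling the one of Corollary~\ref{cor:CCLproba}", i.e.\ exactly your application of the Zwald--Blanchard perturbation bound with $A=\Cov{\tnk}$, $B=\Covh{k}-\Cov{\tnk}$ controlled via Theorem~\ref{thm:stats}, followed by a triangle inequality and Corollary~\ref{cor:CCLproba} for consistency. The only quibble is that $\tnk$ is the deterministic $1-k/n$ quantile (the random one is $\tnkh=R_{(k)}$, already absorbed into Theorem~\ref{thm:stats}), so the concern about its randomness is moot; everything else, including the bookkeeping condition $B(n,k,\delta)<\eg{p}{\tnk}$, is handled correctly.
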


\section{Illustrative Numerical Experiments}\label{sec:num}
Two possible applications
of PCA for functional extremes are considered here. 
In both contexts, our goal  is to
assess the usefulness of the proposed functional PCA method for extremes
by comparing it with the closest alternative, namely
functional PCA of  the full sample (not only  extremes).
On the one hand, a typical objective is to identify
likely profiles of extreme events, by which we mean a finite
dimensional subspace of $\Hilb$ with basis given by the eigenfunctions
of $\Cov{\infty}$ with the highest eigenvalue. In this context,
extreme functional PCA serves as a pattern identification tool for a qualitative
interpretation. This line of thoughts is illustrated in
Section~\ref{sec:expe_sparseIdentification} on a toy simulated dataset in the multiplicative model of Example~\ref{prop:example1}.

On the other hand, functional PCA of extremes  may be viewed as a data compression tool allowing to represent functional extremes in a finite dimensional manner, with optimal reconstruction properties which would not be achieved by standard functional PCA. The relevance of this approach is demonstrated in Section~\ref{sec:expe_optimalReconstruct} with
an electricity demand dataset  which is publicly available on the \texttt{CRAN} network. On this occasion we also propose visual diagnostics for functional regular variation according to finite-dimensional characterizations proposed in Section~\ref{sec:HT}.

The electricity demand dataset \texttt{thursdaydemand} considered in Section~\ref{sec:expe_optimalReconstruct} is  
available in the \texttt{R} package \texttt{fds}.  
It contains half-hourly electricity demands on thursdays  in Adelaide between 6/7/1997 and 31/3/2007. It is made of $ n =  508$ observations $X_i$, 
each of them being represented as a vector of size
$48$, indicating  the recorded half-hour demand on day $i$.
 Here an `angle' is
in practice  the profile of the half-hour records over one day, \ie the
original curve rescaled by its $L^2$-norm.

In our toy example (Section~\ref{sec:expe_sparseIdentification}) we
generate a functional regularly varying dataset of same dimension
$d=48$ with larger sample size $n= 10e+3$, according to
Example~\ref{prop:example1}. With the notations of the latter example,
we choose $Z\in \rset^4$ with independent components, with $Z_1
\sim \Pareto{0.5}$, $Z_2 \sim 0.8 * \Pareto{0.5}$, $Z_3 \sim
\calN(m=0, \sqrt{\sigma^2} = 20)$, $Z_4 \sim \calN(m=0,
\sqrt{\sigma^2} = 0.8 * 20)$, $Z_5 \sim \calN(m=0, \sqrt{\sigma^2} =
0.6 * 20)$, $Z_4 \sim \calN(m=0, \sqrt{\sigma^2} = 0.4 * 20)$, where $\calN(m, \sqrt{\sigma^2}$ is the normal distribution with mean $m$ and variance $\sigma^2$. 
The  first two components have a heavier tail than the last four, which may be considered at noise above sufficiently high level.  The angular measure on the sphere of $\rset^4$ is concentrated on the canonical basis vectors $(e_1,e_2)$.

The $\Lspace$ functions  $A_j$'s are chosen deterministically   for simplicity, namely $A_j(x) =  \sin(2\pi\omega_j x),\,  j\in \{1,3,5\} $ and 
$A_j(x) =  \cos(2\pi\omega_j x), j\in \{2,4,5\} $, with $(\omega_1,\ldots, \omega_6) = ( 2,3,1,4,5,6)$. In this setting the angular measure of extremes in $\Lspace$ is concentrated on a two-dimensional subspace,  namely the one generated by $(A_1,A_2)$.

From a numerical perspective, all scalar products in $\Lspace$ are
approximated in this work by the Euclidean scalar product in
$\rset^{48}$, which corresponds to a Riemann midpoint rule. For
simplicity, and because the choice of the unit scale is also
arbitrary, we dispense with standardizing by the half-hour width
between records. Several numerical solutions exist to perform the
eigendecomposition of the empirical covariance operator. However the
considered datasets  are moderately high dimensional and because all
observations are regularly sampled in time we may use the simplest strategy, 
which is to perform the eigendecomposition of second
moments matrix $\mb{ X^\top X} \in \rset^{48 \times 48}$ where
${\mb X_{i,j}}$ is the $j^{th}$ time record on the $i^{th}$ day. In
practice we  rely on the \texttt{svd} function in \texttt{R}
issuing the  singular value decomposition of $X$ based on a
LAPACK routine.  This boils down to choosing as a basis for
$\Lspace$ a family of indicator functions centered at the
obervation times.
Alternative orthonormal families in $\Lspace$ (typically, the Fourier basis or wavelet basis) may be preferred in higher dimensional  contexts or with irregularly sampled observations.

\subsection{Pattern Identification of functional extremes}
\label{sec:expe_sparseIdentification}

With the synthetic dataset described above, we compare the output of
functional PCA applied to extreme angular data, to the one obtained
using all possible angles, \ie 
the eigen decomposition of $\Covh{k}$ with that of $\Covh{n}$. The
scree-plot (\ie the graph of ordered eigen values, normalized by their sum) for both operators
is displayed in Figure~\ref{fig:screeplots_eigenVects_simu.pdf}. The gap between the first two eigenvalues
and the remaining ones is more pronounced with $\Covh{k}$ than with
$\Covh{n}$, indicating that the method we promote is able to uncover  a sparsity pattern at extreme levels. The limit  measure of
extremes is indeed concentrated on a two-dimensional subspace, as opposed to the distribution of the full dataset which support has dimension four. In addition the 'true' extreme angular pattern, which is a superposition of two  periodic signals with frequencies $(1,7)$, is easily recognized on the first two eigenfunctions of the  extreme covariance $\Covh{k}$ (solid lines, first two panels of the second row in Figure~\ref{fig:screeplots_eigenVects_simu.pdf}) while these frequencies are perturbed by shorter tailed `noise' with  the full covariance $\Covh{n}$ (dotted lines). The discrepancy between extreme and non-extreme eigen functions vanishes for the third eigen function, which may be considered as `noise' as far as extremes are concerned.  
\begin{figure}[hbtb]
  \centering
  \includegraphics[width=0.75\linewidth]{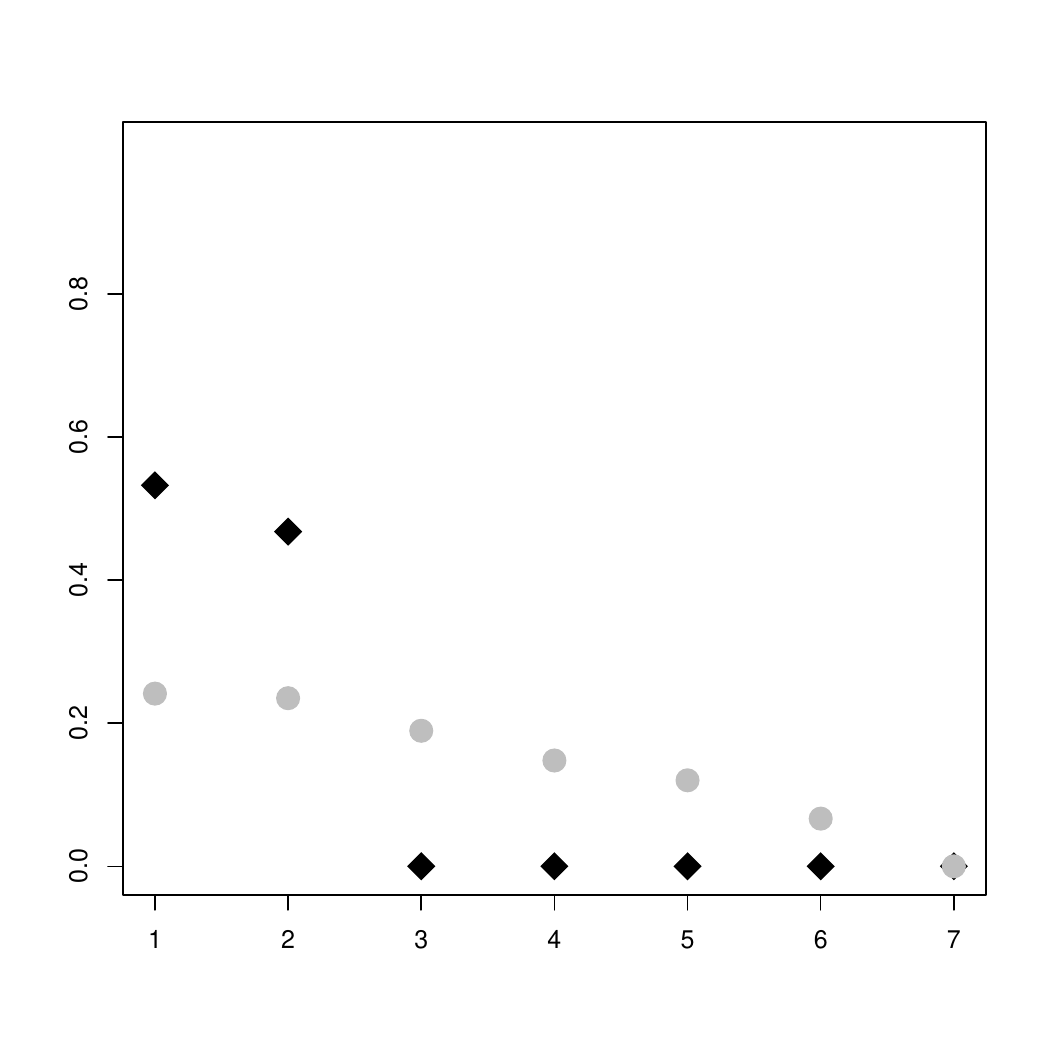}
    \includegraphics[width=0.75\linewidth]{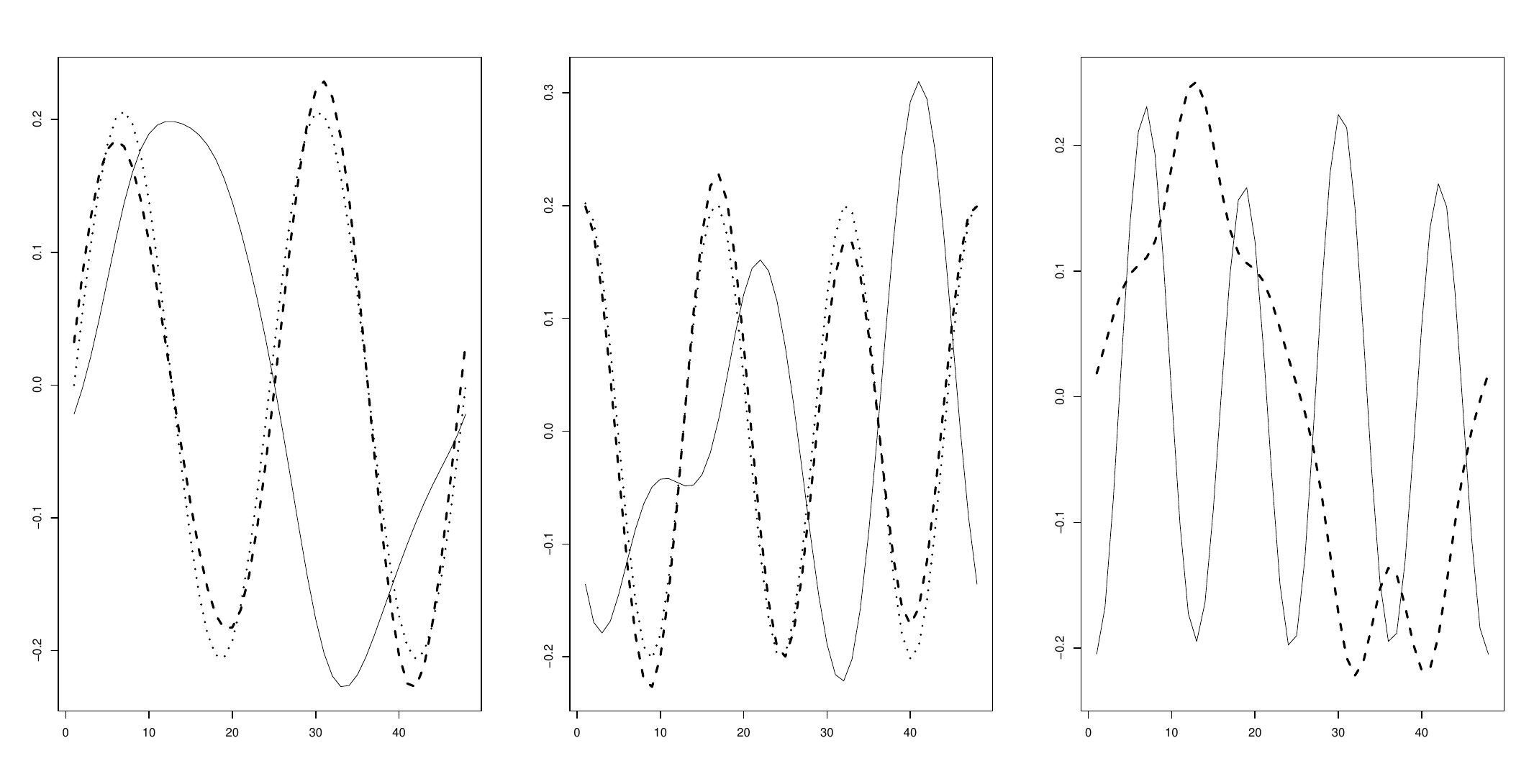}
  \caption{Simulated data: Scree plots and first three  eigenfunctions. Diamond shaped dots and  dashed lines: angular functional PCA of extremes ($\Covh{k}$). 
  Round dots and solid lines: angular functional PCA of the full dataset ($\Covh{n}$). Dotted lines on the first two plots, bottom left: (normalized) functions $A_1$, $A_2$, \ie support of the angular measure for extremes. }
  \label{fig:screeplots_eigenVects_simu.pdf}
\end{figure}

\subsection{Optimal reconstruction of functional extremes on the electricity demand dataset}
\label{sec:expe_optimalReconstruct}
 Here  we investigate the  $L^2$ reconstruction error when projecting new (test)  angular observations on the eigenspaces issued from the spectral decomposition  of the empirical covariance operator $\Covh{k}$. 
Another important  goal of this section is to provide guidelines and graphical diagnostic tools allowing to check whether functional regular variation in $L^2$ may  reasonably be assumed  for  a given  functional dataset. 
We choose to consider  the component-wise square root of the records so that the 
(squared) $L^2$ norm of each vector $X_i$ is an approximation of  the integrated
demand over a full day, 
which seems meaningful from an industrial perspective. 
For simplicity we ignore in this illustrative study any temporal
dependence from week to week.

As a first step, regular  variation must be checked and  an appropriate number $k$ of extreme observations should be selected  for estimating $\Cov{\infty}$ with $\Covh{k}$. A Gaussian QQ-plot (not shown) suggests that the radial quantile is potentially heavy-tailed. 
In view of Theorem~\ref{prop:RVfidi}, $2.$, one should check  regular variation of the radial variable and weak convergence of  univariate projections $\langle\Theta_t, h\rangle$. Regarding the radial variable $R= \|X\|$, we propose to inspect  a Hill plot and a Pareto quantile plot (\cite{beirlant2006statistics}, Chapter 2).
Visual inspection   (Figure~\ref{fig:hillAndPAretoPlots_real})
suggests a stability region for the Hill estimator
of $\gamma = 1/\alpha$ (left panel) between $k=50$ and $k=200$.
Choosing $k=100$ corresponds to an empirical quantile level
$1-k/n \approx 0.7$, for which the Pareto quantile plot (right panel)
is reasonably linear. For $k=100$ the estimated regular variation
index with the Hill estimator $\hat \gamma$ is $\hat \alpha = 1/\hat \gamma = 22.5$
($0.95$ CI: $[18.8 - 27.9]$).
\begin{figure}[h]
  \centering
    \includegraphics[width=0.35\linewidth]{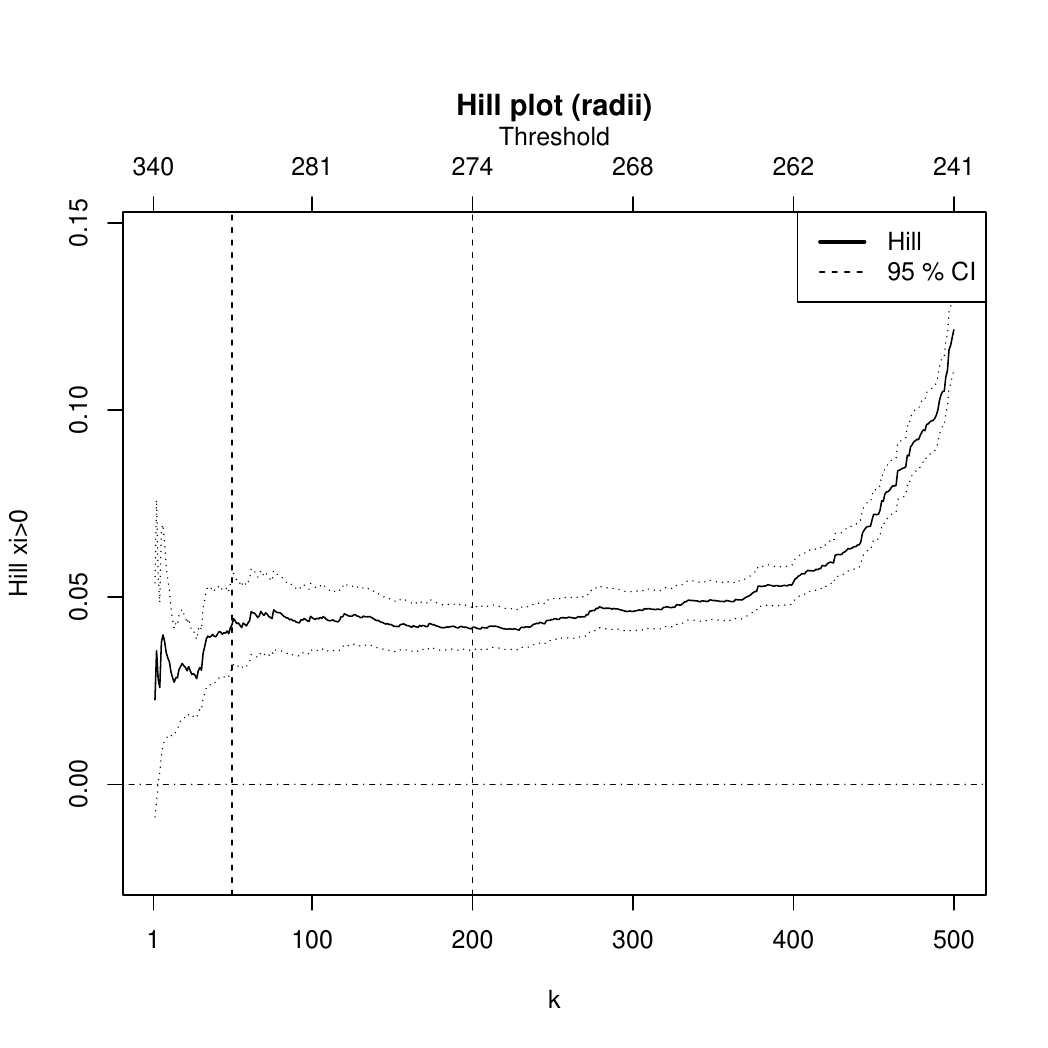}
  \includegraphics[width=0.35\linewidth]{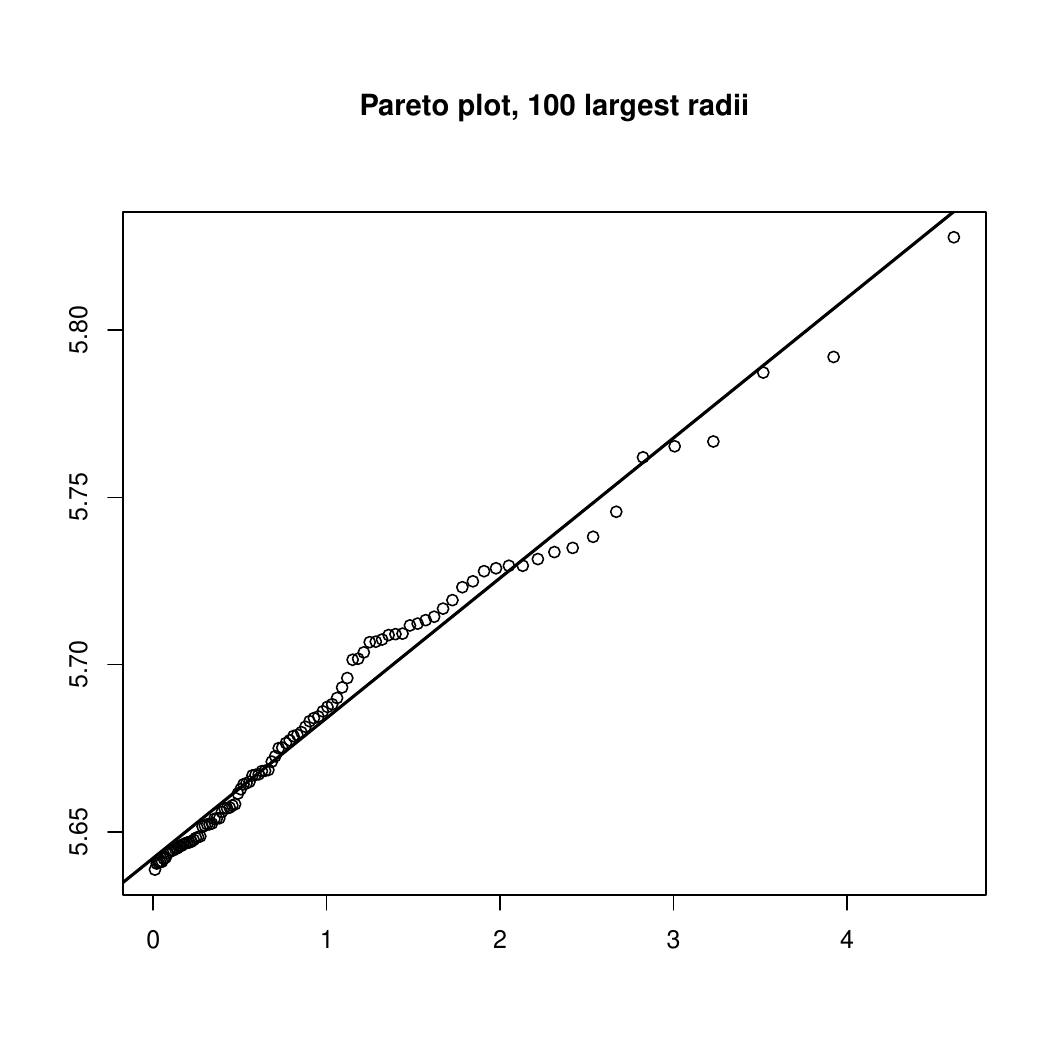}
  \caption{Hill plot (left) and Pareto quantile plot (right) for the radial variable of the air quality dataset. Dotted vertical lines on the Hill plot: stability region.}\label{fig:hillAndPAretoPlots_real}
\end{figure}

The condition of weak convergence of projections $\langle\Theta_t, h\rangle$ is obviously difficult to check in practice, in particular because it must hold for any $h$. As a default strategy we propose to check convergence of the (absolute value of) the first moment,  namely convergence of $\E | \langle \Theta_t,h\rangle |$ as $t\to\infty$,  for a finite number of `appropriate' functions $h_j, j\in\{1,\ldots, J\}$. 
The context of daily records suggests a periodic family, namely we choose $h_j (x) = \sin(2\pi j x) $, for $j \in \{1, 2, 3,4,6,8\}$.  Figure~\ref{fig:weakCvAngle} display the six plots of empirical conditional moment $\frac{1}{k}\sum_{i=1}^k | \langle \Theta_{(i)}, h_j \rangle |$. The plots confirm the existence of a relative stability region around $k=100$.
\begin{figure}[hbtp]
  \centering
  \includegraphics[width=\linewidth]{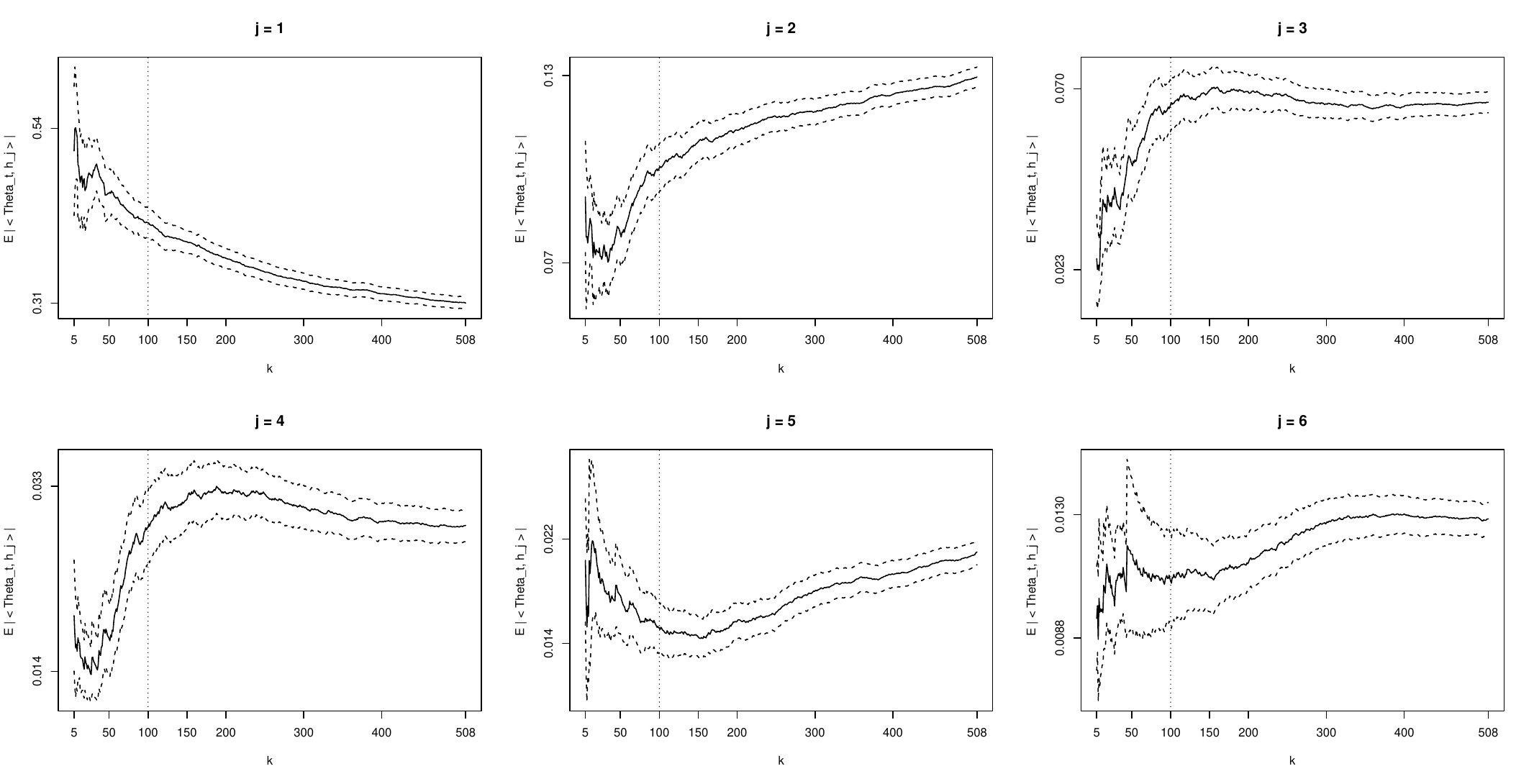}
  \caption{Air quality data: first moment of $|\langle \Theta ,  h_j\rangle|$ conditioned upon $R\ge R_{(k)}$, as a function of $k$, for $h_j(x) = \sin(2\pi j x )$, $x\in [0,1]$ }
  \label{fig:weakCvAngle}
\end{figure}

Turning to performance assessment, we consider the reconstruction squared error of a validation  subsample of  extreme angles $\mathcal{V}\subset\{\Theta_{(1)}\ldots, \Theta_{(k)} \}$ ($k=100$),   after projection on the principal eigenspaces of dimension $p$ corresponding to  three variants of the empirical uncentered angular covariance operator. In this experiment we choose $p = 2$. Namely we consider uncentered covariances   $(i)$ $\widetilde{C}_{k}$, built from  an extreme training set $\mathcal{T} = \{\Theta_{(1)}\ldots, \Theta_{(k)} \}\setminus \mathcal{V}$  ; $(ii)$ $\widetilde{C}_{n}$, incorporating  all angles (including non-extreme ones) except from the validation set,  $\{\Theta_1,\ldots, \Theta_n \}\setminus \mathcal{V}$;
$(iii)$  $\widetilde{C}_{n,k}$, built from a subsample of $\{\Theta_1,\ldots, \Theta_n \}\setminus \mathcal{V}$ of same size as $\mathcal{T}$.  
The left panel of Figure~\ref{fig:compareError} displays the boxplots of the  cross-validation error obtained over $300$ independent experiments where a validation set $\mathcal{V}$ of size $30$ is randomly chosen among $\{\Theta_{(1)}\ldots, \Theta_{(k)} \}$. The right panel displays the  out-of-sample error over a tail region, namely the validation set $\mathcal{V}$ is composed of the most extreme data $\{\Theta_{(1)}\ldots, \Theta_{(30)} \}$, and the boxplots represent the variability of the reconstruction error over the validation set. The conclusion is the same for both panels, performing functional PCA on the fraction of the angular data corresponding to the most extreme angles significantly reduces the reconstruction error, despite the reduced size of the training set. Comparison between the second and the third boxplot of each panel illustrates the negative impact of the reducing   the training sample size, while comparing the first and the third boxplots shows the bias reduction achieved by localizing on the tail region. On this particular example the bias-variance trade-off favors  our approach.

\begin{figure}[h]
  \centering
  \includegraphics[width=0.35\linewidth]{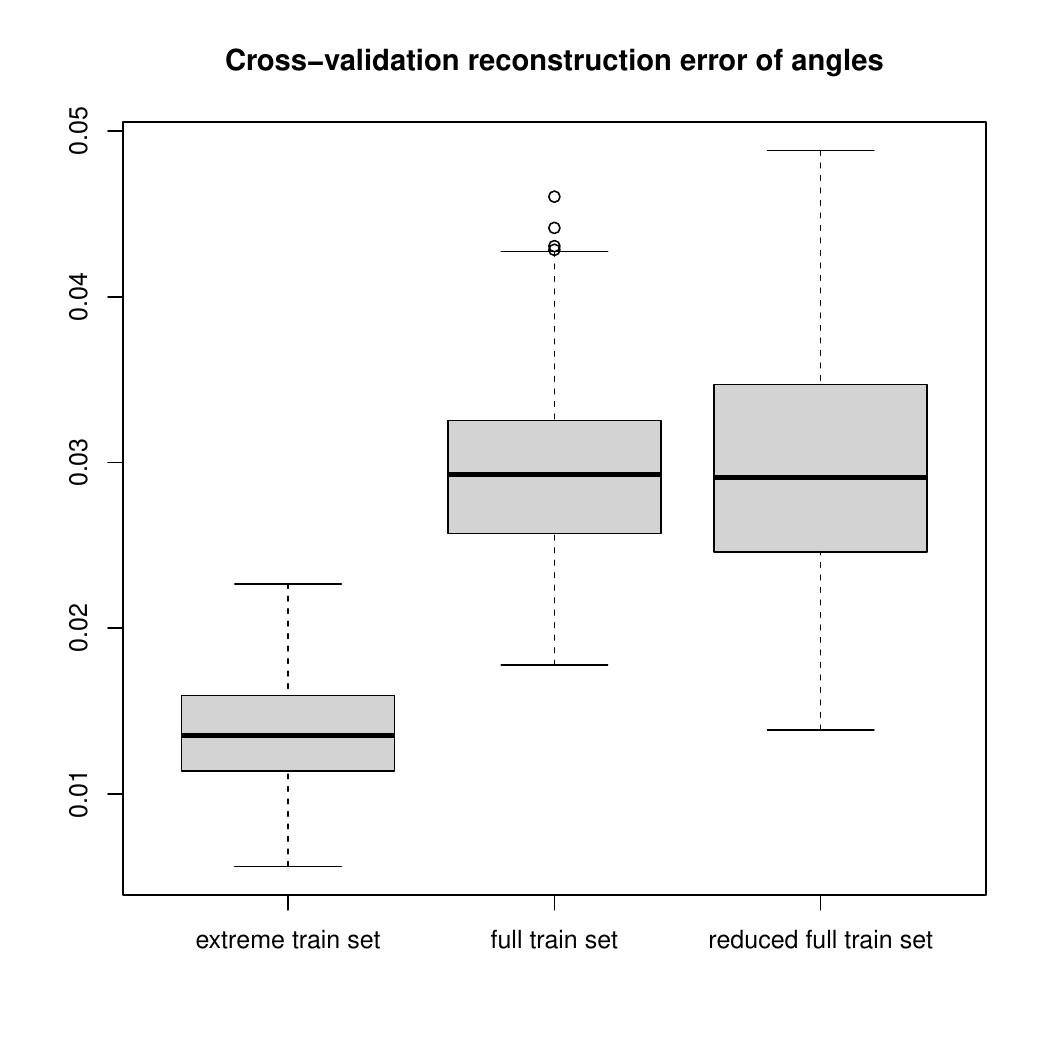}
    \includegraphics[width=0.35\linewidth]{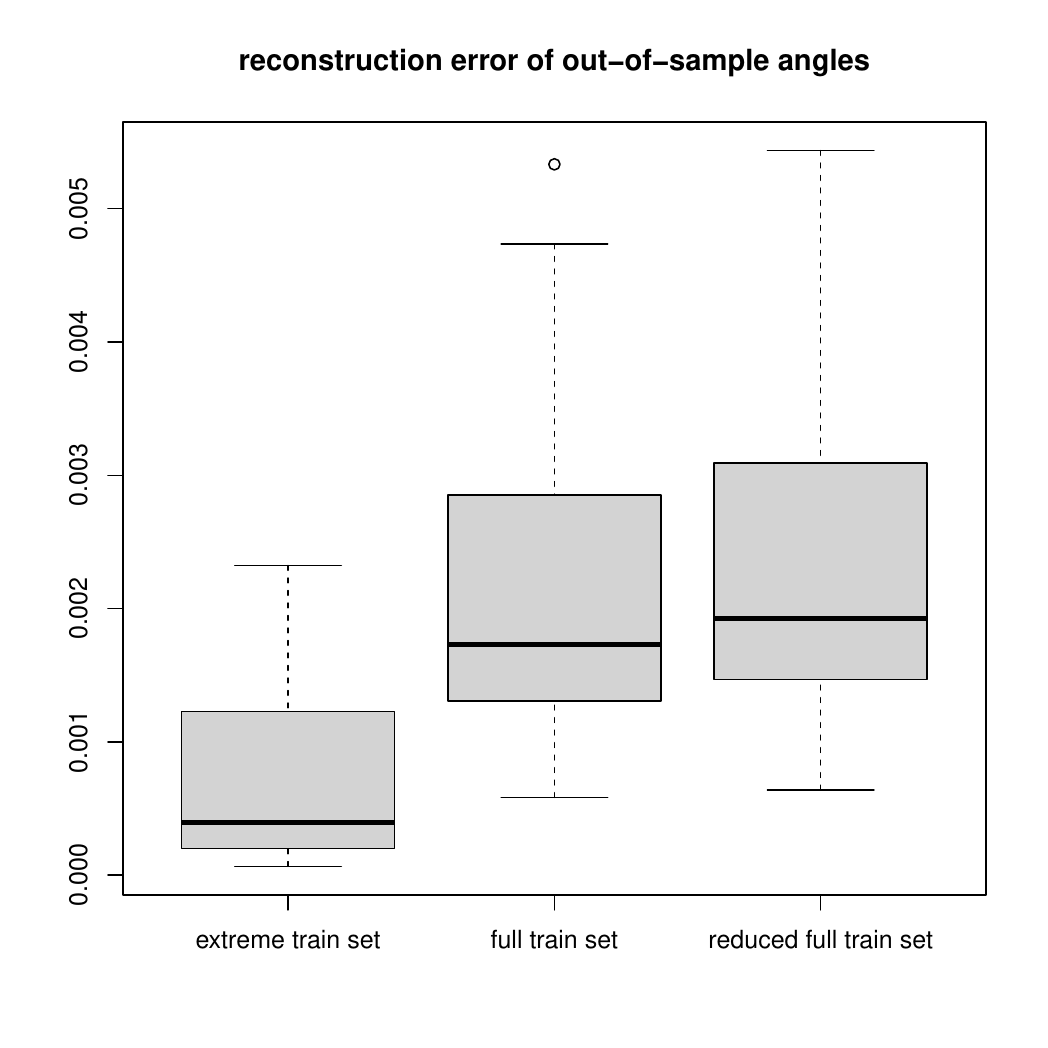}
  \caption{Cross-validation and Extrapolation error of extreme and non-extreme angular functional PCA}
  \label{fig:compareError}
\end{figure}

\clearpage
\bibliographystyle{apalike}
\bibliography{biblio_cov_ope_conv}

\clearpage
\appendix
\begin{center}
  {\huge\bf  \textsc{Appendix}}
\end{center}
\section{Proofs for Section \ref{sec:HT}}\label{sec:proofsHilbert}

\subsection*{Proof of Proposition \ref{prop:CE}.}
Consider as in the sketch of the proof the random element  $X:= R \Theta$ valued in $\Hilb$, with radial component $R=\| X\| \sim Pareto(\alpha)$ on $[1, +\infty [$, \emph{i.e.} $\forall t \s 1, \PP[R \ge t] = t^{-\alpha}$ and with  angular component $\Theta=X/\| X\|$ defined through its conditional distribution 
\begin{equation*}
	\mathcal{L}(\Theta | R ) = \frac{1}{\sum_{k=1}^{\floor{R}} 1/k}\sum_{j=1}^{\floor{R}} \frac{1}{j} \delta_{e_j}.
\end{equation*}
Notice that $r \mapsto \sum_{i=1}^{\floor{r}} \frac{1}{i}$ is slowly varying since we have
$ \sum_{i=1}^{\floor{r}} 1/i \sim  \log r $ as $r\to\infty$. 
We now check that  $X$ satisfies the properties listed in the statement.
That $\|X\| = R \in RV_{-\alpha}$ (Condition~\ref{prop:CE:1}) is obvious.  
Fix  $N \geq 1$, recall that $\pi_N(X)$ is the $\rset^N$-valued projection onto the first $N$ elements of the basis $(e_i)_i$ and denote by $\Theta_{N} = \pi_N(X)/\|\pi_N(X)\|$ the associated angular component in $\rset^N$. Denote also by $R_{N} = \| \Pi_N (X)\|_{\rset^N}$ the  radial components of $\pi_N(X)$ in $\rset^N$. 
First, we show that $R_{N} \in RV_{-\alpha}$. Observe that for all $t \ge N$,
\begin{align*}
  \PP[R_{N} \geq  t]
  &= \PP[R_{N} \ge t, R \s t]\\
  &=\PP[ \Theta \in \{ e_1,...,e_N \} , R \ge t]
    = \E \left[ \1\{R \s t \} \bb{P}\{\Theta \in \{ e_1,...,e_N \} \mid R\} \right]  \\
  &= \E \left[ \1\{R \s t \} \frac{\sum_{i=1}^{N}1/i}{\sum_{l=1}^{\floor{R}}1/l} \right] 
    = \sum_{i=1}^{N}\frac{1}{i}\int_t^{+\infty}  \frac{\alpha r^{-(\alpha+1)}}{\sum_{l=1}^{\floor{r}}1/l} \ud r .
\end{align*}
Since
$r \mapsto 
\alpha
r^{-(\alpha+1)}/(\sum_{l=1}^{\floor{r}}1/l) \in RV_{- (\alpha +1 )}$,
we have $R_{N} \in RV_{- \alpha}$ by virtue of Karamata's theorem, see
\textit{e.g.} p.25 in \cite{resnick2007heavy}.

We next prove that $\mathcal{L}(\Theta_{N} | R_{N} \s t)$ weakly
converges as $t\rightarrow +\infty$. First, since for all $t$, the
measure $\PP[\Theta_N\in \point \given R_n>t]$ is supported by the
finite set $\{e_1,...,e_N\}$. It is sufficient to show convergence of
each $\PP[\Theta_N = e_j \given R_N \ge t]$ towards some $p_j\ge 0$
for all $j$, with $\sum_{j\le N} p_j = 1$. For fixed $j\le N$ and for $t \ge N$,
	\begin{align}
          \PP[\Theta_{N} = e_j \given R_{N} \ge t ]
          &= \frac{\PP[\Theta = e_j , R_{N} \ge t ]}{\PP[R_{N} \ge t ] } \nonumber \\
           &= \frac{\PP[\Theta = e_j , R \s t ] }{\PP[R_{N} \ge t ] } \nonumber \\
          & = \frac{\PP[\Theta = e_j, R\ge t]}{\PP[\Theta \in \{e_1,\ldots, e_N\}, R\ge t]} \nonumber \\
         &= \frac{\E\bigg[\1\{R \ge t \} \frac{1/j}{
           \sum_{l=1}^{\floor{R}}1/l}\bigg]}{\E\bigg[\1\{R \s t \} \frac{\sum_{i=1}^N 1/i}{\sum_{l=1}^{\floor{R}}1/l}\bigg]} \nonumber \\
          &= \frac{1/j}{\sum_{l=1}^{N}1/l} \nonumber
	\end{align}
Hence, when $t$ is large enough, $\mathcal{L}(\Theta_{N} | R_{N} \s t) = \frac{1}{\sum_{l=1}^{N}1/l} \sum_{i=1}^N \frac{1}{i} \delta_{e_i}$.
We have shown that  for all $N \in \bb{N}$, $\pi_N(X)$ is regularly varying in $\rset^N$ with tail index $-\alpha$.

	\medskip
	
We now  show that $X \notin \RV(\bb{H})$. Since $\|X\| \in \RV_{-\alpha}(\bb{R})$, according to Proposition~\ref{prop:polarRVCond}, we have to prove that $\mathcal{L}\Big(\Theta \,|\, R\s t\Big)$ does not converge when $t$ tends to infinity. From  Theorem 1.8.4. in~\cite{vaart1997weak}, it is enough to show that the sequence of measures $P_{\Theta,n}  = \PP[\Theta \in \point \given  R> n]$  is not asymptotically finite-dimensional.  
\emph{i.e.} that 
	 \begin{equation*}
	 \exists \delta,\varepsilon >0, \forall d \in \bb{N}^*, \limsup_n \PP[ \sum_{i> d } \langle \Theta, e_i \rangle^2 \s \delta \bigg| R \s n ] \s \varepsilon. 
	 \end{equation*}
Let $\delta > 0, \varepsilon \in ]0,1[$ and let $n>d$.
\begin{align}
  \PP[ \sum_{i> d } \langle \Theta, e_i \rangle^2 \s \delta \; \bigg| \;  R \s n ]
  &= \frac{\PP[ \Theta \notin \{ e_1,...,e_d\} , R \s n ]}{\bb{P}\{R \s  n\}} \nonumber \\
  &= \frac{\E\bigg[\1\{R \s n\} \PP[ \Theta \notin \{ e_1,...,e_d\} \;|\; R] \bigg]}{\PP[R \s n]} \nonumber \\
  &= \frac{\E\bigg[\1\{R \s n\}\big(1-\frac{\sum_{i=1}^d 1/i}{\sum_{l=1}^{\floor{R}} 1/l}\big) \bigg]}{\PP[R \s n]} \nonumber \\
  &= \E \bigg[ 1 - \frac{\sum_{i=1}^d 1/i}{\sum_{l=1}^{\floor{R}}1/l} \; \bigg|\;  R \s n \bigg] \nonumber \\
  &\s 1 - \frac{\sum_{i=1}^d 1/i}{\sum_{l=1}^{n}1/l} \xrightarrow[n\to\infty]{} 1 \s \varepsilon \label{eq:notfidi} 
\end{align}
Hence, the asymptotic finite-dimensional condition does not hold and $X$ is not regularly varying in $\bb{H}$.

\subsection*{Proof of the claim in Example~\ref{prop:ex3}.}
We show that $X$ is regularly varying in $\Hilb$. 
Following the lines of the proof of Proposition \ref{prop:CE}, it is enough to verify  that $\Pangt \wto \Theta_\infty$. Since the common support of $\Pangt$  and $\Panginf$ is discrete we only need to show that
 $\PP[\Theta  = e_j \given R>t]\to 6/( \pi j)^2$ for fixed $j \in\nset^*$.

For such $j$, following the steps leading to \eqref{eq:notfidi}, we find
\begin{align*}
  \PP[\Theta = e_j \given R\ge t]
  & =\frac{ \EE\Big[ \1\{ R \ge t \} \PP[\Theta = e_j \given R] \Big]}{ \PP[R\ge t] }\\
   & = \frac{ \EE[ \1\{ R \ge t \} j^{-2} /\sum_{l = 1}^{\lfloor R\rfloor } l^{-2} ]}{ \PP[R\ge t] }\\
   &= j^{-2} \EE[1/\sum_{l=1}^{\lfloor R\rfloor } l^{-2}  \,\Big|\, R\ge t ] \\
   &= j^{-2} \int_t^{+\infty} (\sum_{l=1}^{\floor{r}}l^{-2} )^{-1} \frac{\alpha t^\alpha}{r^{\alpha+1}} \ud r
\end{align*}

The integrand in the latter display is a regularly varying function of $r$ with exponent $-\alpha - 1$, and  an application of Karamata's theorem yields that
$\PP[\Theta = e_j \given R\ge t] \to 6/(\pi j^2)$.

\section{Proofs for Section~\ref{sec:KL}}\label{sec:proofsPCA}

\subsection*{Proof of Proposition~\ref{prop:prop4.1}.}

Our main tool to derive a  concentration bound on $\|\Covb{\tnk} - \Cov{\tnk}\|_{HS(\Hilb)}$ is  a Bernstein-type inequality, Theorem~3.8 in \cite{mcdiarmid1998concentration} 
which we recall for convenience.  Here and throughout we adopt the shorthand notation $x_{i:j} = x_i, \ldots, x_j$ for $i\le j$.
\begin{lemma}[Bernstein's type inequality] \label{lem:Bernstein}
Let $\textbf{X}=(X_{1:n})$ with $X_i$ taking their values in a set $\mathcal{Z}$ and let $f$ be a real-valued function defined on $\mathcal{Z}^n$. Let $Z = f(X_{1:n})$. Consider the positive deviation functions, defined for $1 \le i \le n$ and for $x_{1:i} \in \mathcal{Z}^i$
\begin{equation*}
g_i(x_{1:i}) = \E [Z | X_{1:i}=x_{1:i}] - \E [Z | X_{1:i-1}=x_{1:i-1}].
\end{equation*}
Denote by $b$ the maximum deviation 
\begin{equation*}
b = \max_{1 \m i \m n} \sup_{x_{1:i} \in \mathcal{Z}^i} g_i(x_{1:i}).
\end{equation*}
Let $\hat{v}$ be the supremum of the sum of conditional variances, 
\begin{equation*}
\hat{v} := \sup_{(x_1,...,x_n) \in \mathcal{Z}^n} \sum_{i=1}^n \sigma_i^2(f(x_1,...,x_n)),
\end{equation*}
where $\sigma_i^2(f(x_{1:n})):= \Var[g_i(x_{1:i-1},X_i)]$.
If $b$ and $\hat{v}$ are both finite, then 
\begin{equation*}
\bb{P}(f(\textbf{X}) - \EE[f(\textbf{X})] \s \varepsilon ) \m \exp\bigg(\frac{-\varepsilon^2}{2(\hat{v}+b\varepsilon/3)}\bigg),
\end{equation*}
for $u \s 0$.
\end{lemma}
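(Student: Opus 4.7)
The plan is to derive this Bernstein-type inequality via the classical Doob martingale decomposition combined with a conditional Bennett-type moment generating function bound. First, I would introduce the Doob martingale $M_i = \EE[Z \mid X_{1:i}]$ for $0 \le i \le n$, with the conventions $M_0 = \EE[Z]$ and $M_n = Z$. The martingale differences are then precisely $D_i = M_i - M_{i-1} = g_i(X_{1:i})$, so that $f(\mathbf{X}) - \EE[f(\mathbf{X})] = \sum_{i=1}^n D_i$. The hypothesis on $b$ yields $D_i \le b$ almost surely, and the hypothesis on $\hat v$ yields $\sum_{i=1}^n \EE[D_i^2 \mid X_{1:i-1}] \le \hat v$ almost surely, since by construction $\EE[D_i^2 \mid X_{1:i-1}]$ equals $\sigma_i^2$ evaluated at the realized path $X_{1:i-1}$, and the sup in the definition of $\hat v$ dominates this almost surely.

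Next, I would run a standard Chernoff argument. Fix $\lambda \in (0, 3/b)$ and write $\PP(\sum_i D_i \s \varepsilon) \le e^{-\lambda\varepsilon}\,\EE[\exp(\lambda \sum_i D_i)]$. The tower property, applied iteratively starting from the innermost conditional expectation, reduces the problem to controlling the one-dimensional conditional moment generating functions $\EE[e^{\lambda D_i} \mid X_{1:i-1}]$. The key one-dimensional ingredient is Bennett's lemma: for any mean-zero random variable $D$ with $D \le b$ and variance $v$, one has $\EE[e^{\lambda D}] \le \exp(v \psi(\lambda b)/b^2)$, where $\psi(u) = e^u - 1 - u$. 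Combining this with the elementary inequality $\psi(u) \le u^2/(2(1 - u/3))$, valid for $u \in [0,3)$, gives
\begin{equation*}
\EE[e^{\lambda D}] \le \exp\Big( \tfrac{\lambda^2 v}{2(1 - \lambda b/3)} \Big).
\end{equation*}
Applying this bound conditionally at each step and iterating down from $i=n$ to $i=1$, the almost sure control on the summed conditional variances yields the deterministic bound $\EE[\exp(\lambda \sum_i D_i)] \le \exp\big( \lambda^2 \hat v / (2(1 - \lambda b/3)) \big)$.

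Finally, I would optimize over $\lambda \in (0, 3/b)$ with the standard Bernstein choice $\lambda = \varepsilon/(\hat v + b\varepsilon/3)$, which upon substitution delivers the stated exponent $-\varepsilon^2 / \big(2(\hat v + b\varepsilon/3)\big)$. The main obstacle is handling the boundedness uniformly along sample paths: one must verify carefully that the sup in the definition of $\hat v$ really does produce an almost sure upper bound on the accumulated conditional variances, so that the iterated Chernoff argument yields a deterministic (rather than merely in-expectation) bound suitable for the Markov inequality step. One must also track constants through the elementary bound on $\psi$, since the $b/3$ term in the denominator of the final exponent originates precisely from that inequality and must be propagated consistently through the iteration and the optimization in $\lambda$.
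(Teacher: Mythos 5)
Your proof is correct, but note that the paper itself does not prove this lemma at all: it is recalled verbatim as Theorem~3.8 of McDiarmid's \emph{Concentration} survey, with the proof deferred entirely to that reference. What you have written is, in essence, the standard derivation of that theorem: the Doob martingale $M_i=\EE[Z\mid X_{1:i}]$ with differences $D_i=g_i(X_{1:i})$, the one-sided bound $D_i\le b$ (correctly, only the positive deviations are needed), the conditional Bennett bound $\EE[e^{\lambda D}\mid\cdot]\le\exp\bigl(V\psi(\lambda b)/b^2\bigr)$ combined with $\psi(u)\le u^2/(2(1-u/3))$ on $[0,3)$, and the Bernstein choice $\lambda=\varepsilon/(\hat v+b\varepsilon/3)$, which one checks gives exactly the stated exponent. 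Two points deserve care, both of which you at least flag. First, your identification $\EE[D_i^2\mid X_{1:i-1}=x_{1:i-1}]=\sigma_i^2$ uses that the conditional law of $X_i$ given $X_{1:i-1}$ is its marginal law, i.e.\ independence of the $X_i$, which the lemma as stated leaves implicit (McDiarmid's general version conditions properly; the i.i.d.\ setting is the one in which the paper applies the lemma, so no harm is done). Second, the iteration step is slightly more delicate than ``apply the conditional bound and pull out the variance'', since the accumulated conditional variances $V_i=\EE[D_i^2\mid X_{1:i-1}]$ are random: the clean fix is the supermartingale $\exp\bigl(\lambda(M_i-M_0)-c\sum_{j\le i}V_j\bigr)$ with $c=\lambda^2/(2(1-\lambda b/3))$, whose expectation is at most $1$, followed by the pathwise bound $\sum_i V_i\le\hat v$ guaranteed by the supremum in the definition of $\hat v$ (McDiarmid instead runs an induction with pathwise suprema of the remaining variances). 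You correctly identify this as the main obstacle, so the gap is one of detail rather than of idea; filled in this way, your argument is a complete and faithful proof of the cited result.
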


In order to apply this inequality to our purposes we need to write the empirical pre-asymptotic operator (or its surrogate $\Covb{\tnk}$) as a function $f_t$ of the sample $X_{1:n}$. With this in mind,  we introduce a thresholded angular functional
\begin{equation*}
 \fonction{\theta_t}{ \Hilb}{\sphere}{x}{\theta_t(x) = \1\{\|x\|\ge t\}\|x\|^{-1} x\;.}      
\end{equation*}
Observe that with this notation, $\Theta_i\1\{R_i>t\}=\theta_t(X_i)$. Consider now the function 
\begin{equation*}
  \fonction{f_t}{\Hilb^n}{\rset}{x_{1:n}}{
    f_{\tnk}(x_{1:n}) \;=\; \frac{1}{k}\| \sum_{i=1}^n (\theta_t(x_i) \otimes \theta_t(x_i) - \E [\theta_t(X) \otimes \theta_t(X)] \|_{HS(\Hilb)} \;.} 
\end{equation*}
Notice that $f_{\tnk}(X_{1:n})=\|\Covb{\tnk} - \Cov{\tnk}\|_{HS(\Hilb)}$ which is the focus of Proposition~\ref{prop:prop4.1}.

\begin{lemma}[deviations of $f_{\tnk}(X_{1:n})$]\label{lem:deviationsFtnk}
With the above notations, we have
\begin{equation*}
\PP(f_{\tnk}(X_{1:n}) - \EE [f_{\tnk}(X_{1:n})] \ge \varepsilon ) \le \exp \bigg( \frac{-k\varepsilon^2}{4(1+\frac{\varepsilon}{3})}\bigg).
\end{equation*}

\end{lemma}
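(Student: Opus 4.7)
The plan is to apply Lemma~\ref{lem:Bernstein} (the Bernstein-type inequality of~\cite{mcdiarmid1998concentration}) to $Z = f_{\tnk}(X_{1:n})$. Matching the target tail with the general form $\exp(-\varepsilon^2/(2(\hat v+b\varepsilon/3)))$ delivered by the lemma shows that it is enough to establish the two parameter bounds $b\le 2/k$ and $\hat v\le 2/k$.

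For the maximum deviation $b$, I would first record the bounded-differences property: replacing a single argument $x_i$ by $x'_i$ in $f_{\tnk}$ changes its value by at most $2/k$. This is a triangle-inequality computation in $HS(\Hilb)$ combined with the basic identity $\|\theta_t(y)\otimes\theta_t(y)\|_{HS}=\|\theta_t(y)\|^2 = \1\{\|y\|\ge t\}\le 1$. Since $g_i(x_{1:i})$ is the difference of two conditional expectations of $f_{\tnk}$ whose arguments differ only in the $i$-th coordinate (after integrating the second against an independent copy of $X_i$), Jensen's inequality transfers the pointwise bound into $|g_i(x_{1:i})|\le 2/k$, whence $b\le 2/k$.

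The variance bound is the key delicate step. The naive bounded-differences estimate $\sigma_i^2\le (2/k)^2/4$ would only produce $\hat v\le n/k^2$, which is useless in the regime $k\ll n$. The refinement I would use exploits the fact that $\theta_t$ vanishes outside the rare event $\{\|\cdot\|\ge \tnk\}$, which has probability $p=k/n$. Concretely, for fixed $x_{1:i-1}$ set $G(y):= \E[f_{\tnk}(x_{1:i-1}, y, X_{i+1:n})]$, so that $\sigma_i^2=\Var[G(X_i)] = \tfrac12\E[(G(X_i)-G(X_i'))^2]$ for an independent copy $X_i'$. The sharper pointwise estimate
\[
|G(y)-G(y')|\;\le\; \frac{1}{k}\bigl(\1\{\|y\|\ge \tnk\}+\1\{\|y'\|\ge \tnk\}\bigr)
\]
together with expansion of the square yields $\sigma_i^2\le p(1+p)/k^2$, so that $\hat v\le n\,p(1+p)/k^2 = 1/k + 1/n\le 2/k$ whenever $k\le n$. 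Substituting $b, \hat v \le 2/k$ into Lemma~\ref{lem:Bernstein} gives the claimed bound.

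The main obstacle, as just outlined, is this variance calculation: one must go beyond the black-box bounded-differences bound and directly exploit that each summand $\theta_t(X_i)\otimes\theta_t(X_i)$ is nonzero only with probability $k/n$. The resulting extra factor $p$ in $\sigma_i^2$ is precisely what makes the sum of $n$ conditional variances scale like $1/k$ instead of $n/k^2$, and hence what makes the Bernstein inequality give the right order in $k$.
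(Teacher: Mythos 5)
Your proposal is correct and follows essentially the same route as the paper: apply the McDiarmid--Bernstein inequality to $f_{\tnk}$, bound the maximal deviation by $2/k$ via the triangle inequality in $HS(\Hilb)$, and, for the crucial variance term, exploit that $\theta_{\tnk}(X)$ vanishes off the event $\{\|X\|\ge \tnk\}$ of probability $k/n$ so that $\hat v\le 2/k$ rather than $n/k^2$. The only (immaterial) difference is bookkeeping in the variance step: you use the symmetrization identity $\Var[W]=\tfrac12\EE[(W-W')^2]$ and expand $(\1_A+\1_{A'})^2$ to get $\sigma_i^2\le p(1+p)/k^2$, while the paper bounds $\EE\|A-B\|_{HS(\Hilb)}^2\le 2\EE\|A\|_{HS(\Hilb)}^2$ to get $\sigma_i^2\le 2/(nk)$; both yield $\hat v\le 2/k$ and the stated tail bound.
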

\begin{proof}
We apply  Lemma~\ref{lem:Bernstein} to the function $f=f_{\tnk}$. To do so we derive upper bounds on the maximum deviation term $b$ and on the maximum sum of variances $\sigma^2$ from the statement.  Let $x_{1:n}\in \Hilb^n$.  The maximum deviation $b$ is bounded by $2/k$ since by independence among $X_i$'s, with the notations of Lemma~\ref{lem:Bernstein}, 
\begin{align*}
g_i(x_{1:i})&=  \EE[ f_{\tnk}(x_1,...,x_{i-1},x_i,X_{i+1},...,X_n) -  f_{\tnk}(x_1,...,x_{i-1},X_i,X_{i+1},...,X_n)]   \\ 
&\le \frac{1}{k} \EE[\| \theta_{\tnk}(x_i) \otimes \theta_{\tnk}(x_i)- \theta_{\tnk}(X_i) \otimes \theta_{\tnk}(X_i) \|_{HS(\Hilb)}]  \\
&\m \frac{1}{k}(\1 \{ \|x_i \| \s \tnk \} + \PP[\|X\| \s \tnk ]) \m \frac{1+k/n}{k} \m \frac{2}{k}, 
\end{align*}
where the first inequality comes from the triangle inequality
$|\|a\| - \|b\| |\le \|a-b\|$, and the second one from the fact that $\|s\otimes s\|_{HS(\Hilb)} = 1$ if $\|s\| = 1$.

There remains to bound the variance term. Since for every
$1 \le i \le n$, by the tower rule for conditional expectations,
$\EE[g_i(x_{1:i-1},X_i)] = 0$, we may write, for $Y_i$ and independent copy of $X_i$,  
\begin{align*}
  \sigma_i^2(f_{\tnk}(x_1,...,x_n))
  &= \EE[(f_{\tnk}(x_1,...,x_{i-1},Y_i,X_{i+1},...,X_n)-
    f_{\tnk}(x_1,...,x_{i-1},X_i,X_{i+1},...,X_n))^2]  \\
&\le \frac{1}{k^2} \EE[\| \theta_{\tnk}(Y_i) \otimes \theta_{\tnk}(Y_i) -\theta_{\tnk}(X_i) \otimes \theta_{\tnk}(X_i)\|_{HS(\Hilb)}^2]  \\
&\m \frac{2}{k^2}\EE [\|\theta_{\tnk}(X) \otimes \theta_{\tnk}(X)\|_{HS(\Hilb)}^2 ]  \\
&= \frac{2\PP[\|X\| \s \tnk]}{k^2}=\frac{2}{nk}. 
\end{align*}
Hence, $\hat{v}$ is bounded from above by $2/k$. injecting the upper bounds on $\hat v$ and $b$ in Lemma~\ref{lem:Bernstein} concludes the proof.
\end{proof}
The following intermediate lemma proves useful for bounding the expected deviation in the left-hand side of Lemma~\ref{lem:deviationsFtnk}. 
\begin{lemma}\label{lem:pythagoreInExpectation}
Let $A_1,...,A_n$ be independent centered random elements in $\Hilb$. Then 
\begin{equation*}
\EE[\bigg\|\sum_{i=1}^n A_i \bigg\|^2]=\sum_{i=1}^n \EE[\| A_i \|^2].
\end{equation*}
\end{lemma}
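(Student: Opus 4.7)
The plan is to expand the squared norm by bilinearity of the inner product and then exploit independence and centering to kill the cross terms. First I would write
\begin{equation*}
  \left\| \sum_{i=1}^n A_i \right\|^2 = \left\langle \sum_{i=1}^n A_i,\, \sum_{j=1}^n A_j \right\rangle = \sum_{i=1}^n \|A_i\|^2 + \sum_{i\neq j} \langle A_i, A_j\rangle,
\end{equation*}
where integrability of each term in this pathwise identity is guaranteed because the lemma is only of interest (and only invoked in the proof of Proposition~\ref{prop:prop4.1}) when the $A_i$'s are square-Bochner-integrable; in our application the $A_i = k^{-1}(\theta_{\tnk}(X_i)\otimes\theta_{\tnk}(X_i) - \EE[\theta_{\tnk}(X)\otimes\theta_{\tnk}(X)])$ are even bounded in $\HSH$. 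Taking expectations on both sides gives
\begin{equation*}
  \EE\left[ \left\| \sum_{i=1}^n A_i \right\|^2 \right] = \sum_{i=1}^n \EE[\|A_i\|^2] + \sum_{i\neq j} \EE[\langle A_i, A_j\rangle],
\end{equation*}
so the whole proof reduces to showing that $\EE[\langle A_i, A_j\rangle] = 0$ whenever $i\neq j$.

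For the cross term, fix $i\neq j$ and condition on $A_j$. By independence of $A_i$ and $A_j$, the conditional distribution of $A_i$ given $A_j$ equals the law of $A_i$. Applying the defining property of the Bochner expectation, namely that for any $h\in\Hilb$ one has $\EE[\langle A_i, h\rangle] = \langle \EE[A_i], h\rangle$ (see Section~\ref{subsec:probaH}), to the (deterministic, conditionally on $A_j$) vector $h = A_j$, yields
\begin{equation*}
  \EE[\langle A_i, A_j\rangle \mid A_j] = \langle \EE[A_i], A_j\rangle = 0,
\end{equation*}
since $A_i$ is centered. Taking expectations and summing removes the cross terms and gives the announced identity.

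The only step that requires genuine care is the interchange between the real-valued expectation and the Hilbert-valued inner product in the cross term. This is the point where one must rely on the linearity of Bochner integration combined with independence, rather than on any pathwise argument. An equivalent route, if one wants to avoid invoking conditional Bochner expectations, is to fix an orthonormal basis $(e_k)_{k\geq 1}$, write $\langle A_i, A_j\rangle = \sum_k \langle A_i, e_k\rangle \langle A_j, e_k\rangle$, swap sum and expectation by Tonelli (the series of absolute values is dominated by $\|A_i\|\|A_j\|$, which is integrable by Cauchy--Schwarz), and apply the real-valued version of the identity $\EE[\langle A_i, e_k\rangle \langle A_j, e_k\rangle] = \EE[\langle A_i, e_k\rangle]\EE[\langle A_j, e_k\rangle] = 0$, using centering of real-valued projections (which follows from the Bochner definition of $\EE[A_i] = 0$). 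Either route concludes the proof.
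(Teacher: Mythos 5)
Your proof is correct and follows essentially the same route as the paper's: expand the squared norm by bilinearity, reduce to showing the cross terms $\EE[\langle A_i,A_j\rangle]$ vanish, and kill them by conditioning on one variable, invoking independence together with the defining property $\langle \EE[A_i],h\rangle=\EE[\langle A_i,h\rangle]$ of the Bochner expectation and the centering hypothesis. The paper conditions on $A_i$ rather than $A_j$ and writes the chain of equalities starting from $0=\langle\EE[A_i],\EE[A_l]\rangle$, but this is the same argument.
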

\begin{proof}
  The left-hand side equals  $\sum_{i=1}^n \EE [\|A_i\|^2 ] + 2 \sum_{1 \m i < l \m n } \EE[\langle A_i, A_l \rangle ]$. Since the $A_i$'s are independent  with mean $0$, for all $1 \m i < l \m n$,
  $$0 = \langle \EE  [A_i],\EE [A_l] \rangle  = \bb{E} \big[ \langle A_i, \EE [A_l] \rangle \big] = \bb{E} [ \langle A_i, \EE [ A_l | A_i] \rangle ] = \E \big[ \E [\langle A_i,  A_l \rangle | A_i] \big]=\E [ \langle A_i,A_l \rangle ],$$ which  concludes the proof.
\end{proof}
We are now ready to obtain a bound on  $\EE\|\Covb{\tnk} - \Cov{\tnk}\|_{HS(\Hilb)}$.
\begin{lemma}\label{lem:BoundExpectedDeviation}
\begin{equation}
\EE [\|\Covb{\tnk} - \Cov{\tnk}\|_{HS(\Hilb)}] \m \frac{1}{\sqrt{k}}.
\end{equation}
\end{lemma}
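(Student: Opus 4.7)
The plan is to write $\Covb{\tnk}-\Cov{\tnk}$ as a sum of independent, centered, Hilbert--Schmidt--valued random elements, pass from the expected norm to the expected squared norm by Jensen's inequality, and then invoke the Pythagorean identity of Lemma~\ref{lem:pythagoreInExpectation} (applied in the separable Hilbert space $HS(\Hilb)$).

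First, since $\tnk$ is the deterministic $(1-k/n)$-quantile of $R$, $\PP[R_1 \ge \tnk] = k/n$ and one may rewrite
\begin{equation*}
\Covb{\tnk} \;=\; \frac{1}{k}\sum_{i=1}^n \Theta_i\otimes\Theta_i\, \1\{R_i\ge \tnk\},
\end{equation*}
which is unbiased for $\Cov{\tnk}$ because $\EE[\Theta_1\otimes\Theta_1\,\1\{R_1\ge \tnk\}] = (k/n)\Cov{\tnk}$. Setting
\begin{equation*}
A_i \;=\; \frac{1}{k}\Bigl(\Theta_i\otimes\Theta_i\,\1\{R_i\ge \tnk\} - \EE[\Theta_i\otimes\Theta_i\,\1\{R_i\ge \tnk\}]\Bigr),
\end{equation*}
yields i.i.d.\ centered random elements of $HS(\Hilb)$ with $\Covb{\tnk}-\Cov{\tnk} = \sum_{i=1}^n A_i$.

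Second, by Jensen's inequality applied to the concave map $u\mapsto\sqrt u$ and then Lemma~\ref{lem:pythagoreInExpectation} in $HS(\Hilb)$,
\begin{equation*}
\EE\bigl[\|\Covb{\tnk}-\Cov{\tnk}\|_{HS(\Hilb)}\bigr]
\;\le\; \sqrt{\EE\Bigl[\Bigl\|\sum_{i=1}^n A_i\Bigr\|_{HS(\Hilb)}^2\Bigr]}
\;=\; \sqrt{\sum_{i=1}^n \EE\bigl[\|A_i\|_{HS(\Hilb)}^2\bigr]}.
\end{equation*}

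Third, one controls each $\EE[\|A_i\|_{HS(\Hilb)}^2]$ by the usual variance-vs-second-moment bound in a Hilbert space. Because $\|\Theta\otimes\Theta\|_{HS(\Hilb)} = \|\Theta\|^2 = 1$,
\begin{equation*}
\EE\bigl[\|A_i\|_{HS(\Hilb)}^2\bigr]
\;\le\; \frac{1}{k^2}\EE\bigl[\|\Theta_i\otimes\Theta_i\|_{HS(\Hilb)}^2\,\1\{R_i\ge \tnk\}\bigr]
\;=\; \frac{1}{k^2}\,\PP[R_1\ge \tnk]
\;=\; \frac{1}{kn}.
\end{equation*}
Summing over $i\le n$ produces $\EE[\|\Covb{\tnk}-\Cov{\tnk}\|_{HS(\Hilb)}^2]\le 1/k$ and therefore $\EE[\|\Covb{\tnk}-\Cov{\tnk}\|_{HS(\Hilb)}]\le 1/\sqrt k$, as claimed. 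No step is really an obstacle here; the only subtlety worth emphasising is that Lemma~\ref{lem:pythagoreInExpectation} must be applied in the ambient Hilbert space $HS(\Hilb)$ (which is separable, as recalled in Section~\ref{subsec:KL}), not in $\Hilb$ itself.
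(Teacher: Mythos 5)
Your proof is correct and follows essentially the same route as the paper's: rewrite the deviation as a sum of i.i.d.\ centered $HS(\Hilb)$-valued elements, pass to the second moment via Jensen, apply Lemma~\ref{lem:pythagoreInExpectation} in $HS(\Hilb)$, and bound the resulting second moment by $\PP[R\ge \tnk]=k/n$ using $\|\Theta\otimes\Theta\|_{HS(\Hilb)}=1$. The paper performs the identical computation, merely keeping the variance-versus-second-moment step explicit rather than folding it into the bound on $\EE[\|A_i\|_{HS(\Hilb)}^2]$.
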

\begin{proof}
\begin{align*}
  \EE [\|\Covb{\tnk} - \Cov{\tnk}\|_{HS(\Hilb)}]
  &= \frac{n}{k}\bb{E} \Big[\; \Big\| \frac{1}{n}\sum_{i=1}^n \theta_{\tnk}(X_i)\otimes\theta_{\tnk}(X_i) - \EE[\theta_{\tnk}(X) \otimes \theta_{\tnk}(X)]\Big\|_{HS(\Hilb)}\;\Big]  \\
  &\m \frac{n}{k} \bb{E}\Big[ \; \Big\|
    \frac{1}{n}\sum_{i=1}^n   \theta_{\tnk}(X_i)\otimes\theta_{\tnk}(X_i)  -
    \EE[\theta_{\tnk}(X_i) \otimes \theta_{\tnk}(X_i)]  \Big\|_{HS(\Hilb)}^2
    \; \Big]^{1/2} \\
  & =  \frac{n}{k}  \frac{1}{\sqrt{n}} \bb{E} \Big[\;
    \big\|\theta_{\tnk}(X) \otimes \theta_{\tnk}(X) -
    \EE[\theta_{\tnk}(X) \otimes \theta_{\tnk}(X)] \big\|_{HS(\Hilb)}^2 \; \Big]^{1/2} \\
  &= \frac{\sqrt{n}}{k}  \Big( \bb{E}\big[\|\theta_{\tnk}(X) \otimes \theta_{\tnk}(X)\|_{HS(\Hilb)}^2\big] -
    \big\| \EE [ \theta_{\tnk}(X) \otimes \theta_{\tnk}(X)]\big\|_{HS(\Hilb)}^2
    \Big)^{1/2} \\
  &\m \frac{\sqrt{n}}{k} \EE[ \|\theta_{\tnk}(X) \otimes \theta_{\tnk}(X)\|_{HS(\Hilb)}^2 ]^{1/2} \\
  &\m \frac{\sqrt{n}}{k}\PP[\|X\| \s \tnk]^{1/2} = \frac{1}{\sqrt{k}}, 
\end{align*}
where the second identity derives from Lemma~\ref{lem:pythagoreInExpectation}, 
The last inequality follows from $\|\theta(x)\otimes\theta(x)\|_{HS(\Hilb)} = 1$.
\end{proof}

Combining Lemmata~\ref{lem:deviationsFtnk} and~\ref{lem:BoundExpectedDeviation}, together with the definition of $f_{\tnk}$, we obtain that  with probability at least $1- \gamma/2$,
\begin{equation*}
\|\Covb{\tnk} - \Cov{\tnk}\|_{HS(\Hilb)} \m \frac{1}{\sqrt{k}}+\frac{4}{3k}\log(2/\gamma)+4\bigg(\bigg(\frac{\log(2/\gamma)}{3k}\bigg)^2+\frac{\log(2/\gamma)}{k} \bigg)^{1/2}.
\end{equation*}
Simplifying the above display with $\sqrt{a+b}\le \sqrt{a}+\sqrt{b}$ yields the statement of Proposition~\ref{prop:prop4.1}.

\end{document}